\theoremstyle{plain}
\newtheorem{theorem}{Theorem}[section]
\newtheorem{proposition}[theorem]{Proposition}
\newtheorem{corollary}[theorem]{Corollary}
\theoremstyle{definition}
\newcommand{\appsection}[1]{\let\oldthesection\thesection
\renewcommand{\thesection}{Appendix \oldthesection}
\section{#1}\let\thesection\oldthesection}
\newtheorem{definition}[theorem]{Definition}
\theoremstyle{remark}
\newtheorem{remark}[theorem]{Remark}
\def\D{{\mathbb{D}}}
\def\Z{{\mathbb{Z}}}
\def\Q{{\mathbb{Q}}}
\def\C{{\mathbb{C}}}
\def\P{{\mathbb{P}}}
\def\O{{\mathcal{O}}}
\def\X{{\mathcal{X}}}
\def\AR{{\mathcal{A}}}
\newlength\templength
\DeclareMathOperator{\exc}{exc}
\begin{document}
\title{Rational configurations in K3 surfaces and simply-connected $p_g=1$ surfaces for $K^2=1,2,3,4,5,6,7,8,9$}
\dedicatory{Dedicated to Fabrizio Catanese on the occasion of his 71st birthday}

\author[Javier Reyes]{Javier Reyes}
\email{jereyes4@uc.cl}
\address{Facultad de Matem\'aticas, Pontificia Universidad Cat\'olica de Chile, Campus San Joaqu\'in, Avenida Vicu\~na Mackenna 4860, Santiago, Chile.}

\author[Giancarlo Urz\'ua]{Giancarlo Urz\'ua}
\email{urzua@mat.uc.cl}
\address{Facultad de Matem\'aticas, Pontificia Universidad Cat\'olica de Chile, Campus San Joaqu\'in, Avenida Vicu\~na Mackenna 4860, Santiago, Chile.}

%\date{\today}

\begin{abstract}
We prove the existence of $(20-2K^2)$-dimensional families of simply-connected surfaces with ample canonical class, $p_g=1$, and $1 \leq K^2 \leq 9$, and we study the relation with configurations of rational curves in K3 surfaces via $\mathbb Q$-Gorenstein smoothings. Our surfaces with $K^2=7$ and $K^2=9$ are the first surfaces known in the literature, together with the existence of a $4$-dimensional family for $K^2=8$.
\end{abstract}

%\subjclass[2010]{14J10, 14E30, 14J29, 14J17}

%\keywords{Moduli space of surfaces of general type, KSBA compactification, Wahl singularities.}

\maketitle

\tableofcontents

%----------------------------------------------------------------
\section{Introduction} \label{s1}

$\Q$-Gorenstein smoothings allow us to relate configurations of rational curves in rational surfaces with families of $p_g=0$ surfaces. They include Enriques surfaces (starting with a Coble surface for example), properly elliptic surfaces (in particular Dolgachev surfaces), and surfaces of general type (in particular simply-connected, although non-trivial fundamental groups can appear too, as shown for Godeaux surfaces in \cite{CU18,DRU20}). In fact, the only known examples of simply-connected minimal surfaces of general type with $p_g=0$ and $K^2=2,3,4$ have been constructed via $\Q$-Gorenstein smoothings (see \cite{LP07}, \cite{PPS09a}, \cite{PPS09b} for the first examples; see e.g. \cite{SU16} for several other examples). 

Similarly, simply-connected $p_g=1$ surfaces are related to configurations of rational curves in K3 surfaces. In \cite{PPS13} and via $\Q$-Gorenstein smoothings, J. Park, H. Park, and D. Shin proved the existence of $(20-2K^2)$-dimensional families of simply-connected minimal surfaces of general type with $p_g=1$ and $K^2=3,4,5,6$, and also the existence of such surfaces with $K^2=8$.

We recall that $p_g=1$ surfaces were studied in the eighties for their relation with counterexamples to Torelli type of theorems (see \cite{Ky77}, \cite{C80} and \cite{T80, T81}). For $K^2=1$, Catanese proved \cite{C80} that the moduli space is an irreducible rational variety of dimension $18$.  (Beauville \cite{B14} showed, using that the image of the period map is open \cite{C79}, that in this moduli space the surfaces with maximal Picard number are dense.) These surfaces are simply-connected, and they can be realized as weighted complete intersections $(6,6)$ in $\P(1,2,2,3,3)$. (For similar results in the situation of Gorenstein stable surfaces see \cite{FPR17}.) 

Less is known in the case of $K^2=2$. It can be shown that the torsion of the Picard group is either $\{0\}$ or $\Z/2$. For the latter, Catanese and Debarre \cite{CD80} proved that their moduli space is a rational irreducible variety of dimension $16$, and that these surfaces have $\pi_1=\Z/2$. They also worked out various properties for torsion zero surfaces, including an irreducible $16$ dimensional component of the moduli space, but it is still an open problem whether this is the only component for the moduli space of torsion zero surfaces. If that were the case, then all these surfaces would be simply-connected. We note that these particular surfaces are relevant for the description of the moduli space of $\Z/2$-Godeaux surfaces, which was recently proved \cite{DR20} to be a rational irreducible variety of dimension $8$, finishing with the previous attempts e.g. \cite{CD80,CCO94,Co16}. For $K^2=3$ much less is known, the torsion of the Picard group has order at most $4$, and the ones with torsion $\Z/3$ are described in \cite{M03}. Up until now, the article \cite{PPS13} provides the only other examples of simply-connected $p_g=1$ surfaces of general type.  

In the present paper, we study the relation between configurations of rational curves in K3 surfaces and simply-connected $p_g=1$ surfaces via $\Q$-Gorenstein smoothings, and we prove the existence of $(20-2K^2)$-dimensional families for $K^2=7,8,9$. This finishes with all possible values of $K^2$ in the case of unobstructed $\Q$-Gorenstein smoothings, where surfaces with higher $K^2$ are harder to find. A particular characteristic of our constructions is that we use configurations from one fixed simple normal crossings (SNC) configuration formed by smooth rational curves (see Section \ref{s4}).

\begin{theorem}
There are normal projective surfaces $X$ with ample canonical class, and the following properties:

\begin{itemize}
\item[(1)] They have only cyclic quotient singularities of type $\frac{1}{n^2}(1,na-1)$ with gcd$(n,a)=1$ (i.e. Wahl singularities), and rational double points. The minimal resolution of $X$ is a suitable composition blow-ups from a K3 surface.

\item[(2)] $q(X)=0$, $p_g(X)=1$, and $K_X^2=1,2,3,4,5,6,7,8,9$.

\item[(3)] The surface $X$ has no-local-to-global obstructions to deform, and in particular it represents a family of dimension $20-2K_X^2$ in the Koll\'ar--Shepherd-Barron--Alexeev (KSBA) compactification of the moduli space of surfaces of general type.
\end{itemize}
\label{main}
\end{theorem}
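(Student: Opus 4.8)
The plan is to construct, for each $1 \leq K^2 \leq 9$, an explicit SNC configuration of smooth rational curves inside a suitable (non-generic) K3 surface $Y$, perform a carefully chosen sequence of blow-ups $\widetilde{Y} \to Y$ to obtain a rational-like surface $\widetilde{X}$ carrying disjoint chains of $(-2)$- and other negative curves that contract to Wahl singularities (chains of type $\frac{1}{n^2}(1,na-1)$) and to rational double points, and then take $X$ to be the contraction of those chains. One starts from \emph{one} fixed SNC configuration (as promised in the introduction, Section \ref{s4}): so the first step is to write down that master configuration on a K3 surface with enough independent $(-2)$-curves, verify it is SNC and that each relevant curve is smooth rational with the stated self-intersections, and then extract from it, by choosing which curves to blow up and how many times, the nine individual configurations indexed by $K^2$. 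The blow-up combinatorics must be organized so that, after contracting the chains, the resulting $K_X$ is ample with $K_X^2$ equal to the target value; this is a bookkeeping computation using $K_{\widetilde{X}} = \pi^* K_Y + (\text{exceptional divisors})$ on the K3 side (where $K_Y = 0$) together with the discrepancy formula for Wahl singularities, and ampleness is checked via the Nakai–Moishezon criterion against the finitely many curves in the configuration and a positivity/Zariski-type argument for the rest.

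Next I would compute the invariants in item (2). Since $X$ has only rational singularities (Wahl singularities are rational, as are RDPs) and its minimal resolution $\widetilde{X}$ is obtained from a K3 by blow-ups, one has $q(\widetilde{X}) = q(Y) = 0$ and $p_g(\widetilde{X}) = p_g(Y) = 1$, and these birational invariants pass to $X$: so $q(X) = 0$, $p_g(X) = 1$. The value $K_X^2$ is forced by the blow-up count as in the previous paragraph. Simple-connectedness of $X$ (needed for the families to be genuinely new) follows from a van Kampen argument: $\pi_1$ of the smooth locus of $X$ surjects onto $\pi_1(X)$, and since $Y$ is simply-connected and blow-ups along the configuration do not create fundamental group, one controls $\pi_1$ of the complement of the exceptional/contracted locus and kills the local fundamental groups $\Z/n$ of the Wahl points using that the adjacent curves in the ambient surface provide discs meeting the links — this is the standard mechanism from \cite{LP07,PPS09a,PPS13} and must be adapted to the present configuration.

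For item (3), the key input is the vanishing of the obstruction space to $\mathbb{Q}$-Gorenstein deformations, i.e. the no-local-to-global obstruction property. The strategy is the standard one: the local-to-global spectral sequence for $\mathbb{Q}$-Gorenstein deformations gives an exact sequence controlling $H^2$ of the sheaf of first-order $\mathbb{Q}$-Gorenstein deformations, and one shows $H^2(X, T^0_{\text{QG}}) = 0$ (equivalently, a suitable $H^2$ on the resolution with a correction twist along the Wahl chains vanishes) by relating it via Serre duality to $H^0$ of a sheaf of the shape $\Omega^1_{\widetilde{X}}(\log)\otimes \omega_{\widetilde{X}}$ twisted down by the exceptional curves, and proving that group vanishes using the K3 geometry (e.g. $\Omega^1$ of a K3 has no sections, plus a careful analysis of the logarithmic/twisting contributions from the blown-up configuration). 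Granting this vanishing, deformation theory gives that the KSBA moduli space (the Kollár–Shepherd-Barron–Alexeev compactification) is smooth at $[X]$ of dimension $\chi(T^0_{\text{QG}})$, and a Riemann–Roch / dimension count identifies this with $10\chi(\mathcal{O}_X) - 2K_X^2 = 20 - 2K_X^2$; a general $\mathbb{Q}$-Gorenstein smoothing of $X$ is then a smooth minimal surface of general type with $p_g=1$, $q=0$, $K^2 = K_X^2$, proving the theorem and — via \cite{PPS13}-type arguments — that the generic member is simply-connected.

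The main obstacle I expect is the $H^2$-vanishing in item (3): for the larger values $K^2 = 7,8,9$ the configuration is long and intricate, so controlling the logarithmic $1$-forms and the twists coming from each Wahl chain — and showing no global section survives — is delicate and is really where the cleverness of the chosen master configuration has to pay off. A secondary but nontrivial difficulty is arranging ampleness of $K_X$ simultaneously with the prescribed $K_X^2$ and with the contractibility (negative-definiteness of each chain), since these constraints pull against each other and pin down the allowed blow-up patterns quite rigidly.
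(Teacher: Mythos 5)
Your plan follows essentially the same route as the paper: a single master SNC configuration of $(-2)$-curves on a special (extremal elliptic) K3 surface, blow-ups at nodes producing Wahl chains that are contracted, ampleness via discrepancies and Nakai--Moishezon, the $H^2$-vanishing via the residue sequence for logarithmic differentials together with $H^0(\Omega^1_S)=0$ and Serre duality, and van Kampen for $\pi_1$ of the smoothing. The only refinement worth noting is that the step you flag as the main obstacle is not delicate in the paper: Proposition \ref{obstruction} shows that $\dim H^2(X,T_X)$ equals the number of curves minus the rank they span in the N\'eron--Severi group, so for each of the nine configurations the vanishing reduces to checking that the intersection matrix of the chosen sub-configuration is invertible (a single determinant computation), which is exactly what is done case by case in Section \ref{s4}.
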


For each of these surfaces, we are able to compute the fundamental group of a $\Q$-Gorenstein smoothing, and so we obtain the following.

\begin{corollary}
For each $k \in \{ 1,2,3,4,5,6,7,8,9 \}$, there are simply-connected nonsingular surfaces with ample canonical class, $K^2=k$, and $p_g=1$, whose deformation space has dimension $20-2k$. In particular, they are exotic minimal symplectic $3 \C \P^2 \# (19-k) \overline{\C \P^2}$.
\end{corollary}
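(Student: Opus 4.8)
The plan is to deduce the Corollary from Theorem \ref{main} by producing, for each value $k=K_X^2$, a $\Q$-Gorenstein smoothing of the surface $X$ and then controlling its invariants and fundamental group. First I would invoke part (3) of Theorem \ref{main}: since $X$ has no local-to-global obstructions, the natural map from the global deformation functor to the product of the local deformation functors of the singularities is smooth, and each Wahl singularity $\frac{1}{n^2}(1,na-1)$ admits a one-parameter $\Q$-Gorenstein smoothing while the rational double points are taken along with their usual (crepant) smoothings; hence there is a flat family $\mathcal X \to (0 \in T)$ over a smooth base with $\mathcal X_0 = X$ and general fiber $\mathcal X_t$ a nonsingular projective surface. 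Standard facts about $\Q$-Gorenstein smoothings (the canonical class is $\Q$-Cartier and behaves well in the family) give $K_{\mathcal X_t}$ ample for $t$ general, $K_{\mathcal X_t}^2 = K_X^2 = k$, and the invariants $\chi$ and $p_g$, $q$ are locally constant in such families, so $q(\mathcal X_t)=0$ and $p_g(\mathcal X_t)=1$. Thus $\mathcal X_t$ is a minimal surface of general type with $p_g=1$, $q=0$, $K^2=k$, and its KSBA deformation space has the same dimension $20-2k$ as that of $X$ by part (3).

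Next I would compute $\pi_1(\mathcal X_t)$. The key tool is Van Kampen-type / Milnor fiber reasoning: replacing a neighborhood of each Wahl singularity by its Milnor fiber changes the fundamental group in a controlled way, so $\pi_1(\mathcal X_t)$ is a quotient of $\pi_1(X \setminus \mathrm{Sing}(X))$ by the subgroups normally generated by the images of the fundamental groups of the Milnor fibers of the Wahl singularities (the Milnor fiber of $\frac{1}{n^2}(1,na-1)$ has abelian, indeed cyclic, fundamental group $\Z/n$, and the rational double points are simply connected so contribute nothing). Because the minimal resolution of $X$ is obtained by blow-ups from a simply-connected K3 surface, $\widetilde X$ is simply connected, and then $\pi_1(X)$ — hence $\pi_1(X \setminus \mathrm{Sing})$ up to the relations coming from small loops around the exceptional configurations — is generated by those loops; a careful bookkeeping of the chains of rational curves resolving the Wahl singularities shows these loops die in the Milnor fibers, forcing $\pi_1(\mathcal X_t)=1$. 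This is the step where one must use the explicit configuration from Section \ref{s4}: one needs that the exceptional loci, together with the way the Milnor fibers are glued in, kill all of $\pi_1$. I expect this to be the main obstacle, since it is not formal — one genuinely uses the specific geometry of the chosen SNC configuration of rational curves and the combinatorics of the resolutions of the Wahl singularities.

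Finally, once simple-connectedness and the invariants are in hand, the last sentence of the Corollary is a standard consequence of complex surface theory and Freedman's theorem: a simply-connected minimal surface of general type with $p_g=1$, $q=0$ has $\chi(\mathcal O)=2$, so $c_1^2 = k$ and $c_2 = 12\cdot 2 - k = 24-k$, hence topological Euler characteristic $24-k$ and signature $\sigma = \tfrac13(c_1^2 - 2c_2) = \tfrac13(k - 2(24-k)) = k - 16$; as the intersection form is odd (because $K^2 + \text{signature} = 2k - 16$ is even but the form is not even for a surface of general type with these invariants — more precisely, one checks $b_2 = c_2 - 2 = 22-k$ with $b_2^+ = 2p_g+1 = 3$ and $b_2^- = 19-k$), Freedman's classification identifies the underlying $4$-manifold with $3\C\P^2 \# (19-k)\overline{\C\P^2}$. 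Being of general type with a nonzero canonical class, $\mathcal X_t$ is not diffeomorphic to a Kähler surface with trivial or torsion canonical class and in particular (by Seiberg--Witten invariants, since minimal surfaces of general type have nonzero SW invariants while connected sums with $\overline{\C\P^2}$ of standard pieces do not) it is an exotic, minimal symplectic structure on that manifold, which is the assertion.
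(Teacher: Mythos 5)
Your proposal is correct and follows essentially the same route as the paper: deduce everything from Theorem \ref{main} via the unobstructedness of $X$, compute $\pi_1$ of the smooth fiber by the Lee--Park surgery/Van Kampen argument, and finish with Noether's formula, Rokhlin/oddness of the form, Freedman, and Seiberg--Witten. The one step you correctly flag but do not carry out --- that the loops around the Wahl chains actually die --- is exactly where the paper's Proposition \ref{pi1} enters: it gives a clean sufficient criterion (a $\P^1$ joining the ends of two Wahl chains with coprime indices, or hitting a single chain transversally at an end), which is checked configuration by configuration in Section \ref{s4}; note that for $K^2=7$ the coprimality hypothesis fails ($\gcd(2168,4)=4$) and the paper must instead use Mumford's presentation of the link group to kill the generator, so your ``careful bookkeeping'' genuinely requires a separate argument in that case rather than a uniform one.
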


Each Wahl singularity in $X$ defines a divisor in the KSBA compactification of the moduli space of surfaces of general type. The general surface $Y$ in that divisor has one Wahl singularity. Let us suppose that the surface $Y$ can also be constructed from a nodal configuration $\AR$ of $(-2)$-curves in a K3 surface $S$. (This is a situation that happens often.) Thus $\AR$ defines a sublattice $M$ in Pic$(S)$ and, by Dolgachev \cite[Proposition 2.1]{D96}, it defines a local moduli space $S_M$ of isomorphism classes of $M$-polarized K3 surfaces. It can be checked that the dimension of $S_M$ matches the dimension of the corresponding divisor (see Remark \ref{igor}). It may also be the case that the same configuration defines two distinct divisors in the same moduli space. It is not clear to us the direct relation between configurations and Wahl singularities. One aim of this paper is to elaborate on some connections between bounds of lengths of Wahl singularities and logarithmic Chern numbers of configurations.

Another particular property is that the Picard number of the K3 surface must be at least $r=P+2K^2 \geq 3$, where $P$ is the number of Wahl singularities in $X$ and $r$ is the number of curves in the configuration. It is well-known that such K3 surfaces contain a finite number of $(-2)$-curves if and only if they have a finite automorphism group (which is equivalent to being a Mori dream space), and they were all classified by their Picard lattices, cf. \cite{Ro20}. It turns out that to construct our surfaces we need configurations of $(-2)$-curves in surfaces with infinitely many $(-2)$-curves, in particular, we use elliptic extremal K3 surfaces \cite{Ye99,SZ01}. In this paper, we actually obtain our results from just one of them. We work out all details for specific examples for each $K^2$. At the end we provide some others, but the complete world of them seems to be huge, depending on the value of $K^2$.

In Section \ref{s2}, we describe general $\Q$-Gorenstein degenerations of regular $p_g=1$ surfaces and general bounds for lengths of Wahl singularities in the special fiber. To exemplify with the simplest case, we classify all possibilities for $K^2=1$ and one Wahl singularity (Theorem \ref{OneWahlK=1}), realizing most of them after showing a general tool for obstructions from nodal rational configurations in K3 surfaces (Proposition \ref{obstruction}). In Section \ref{s3}, we study the geography of SNC configurations of rational curves in K3 surfaces in relation to the surfaces we want to find (Proposition \ref{chern}). We also review various criteria to check the positivity of $K$, to compute the fundamental group of general fibers, and to obtain unobstructedness in our case. In Section \ref{s4}, we prove Theorem \ref{main} and its corollary, showing also several examples at the end with particular properties, among them we show some T-singularities $\frac{1}{2n^2}(1,2na-1)$ and wormholes \cite{UV21}. In a sequel paper, we will work out new exotic surfaces with higher $K^2$, in the cases of simply-connected $p_g=1$ and also $p_g=0$ surfaces.

%----------------------------------------------------------------------------------------------------------------------------------------------
\subsubsection*{Notation and conventions}

\noindent

\begin{itemize}

\item A simple normal crossings (SNC) divisor (or configuration) is a finite collection of nonsingular curves on a nonsingular surface so that its union has only nodes as singularities.

\item A $(-m)$-curve $C$ on a nonsingular surface is a $C \simeq \P^1$ with $C^2 = -m$.

\item The Kodaira dimension of a nonsingular surface $S$ is denoted by $\kappa(S)$.

\item For a projective surface $X$: $q(X)=h^1(X,\O_X)$ and $p_g(X)=h^2(X,\O_X)$.

\item A nonsingular projective surface with an elliptic fibration over $\mathbb P^1$ which has one fiber of multiplicity $n$ and $p_g=1$ is denoted by $D_{n}$.

\item For a birational morphism $\phi \colon \tilde X \to X$, the exceptional loci is denoted by $\exc(\phi)$.

\end{itemize}

%----------------------------------------------------------
\subsubsection*{Acknowledgments}
%We would like to thank XXX for useful discussions and comments. 
The first-named author was funded by the ANID scholarship 22201484. The second-named author was supported by the FONDECYT regular grant 1190066. This work was also supported by ANID - Millennium Science Initiative Program - grant NCN$17\textunderscore \,059$ ``Millenium Nucleus Center for the Discovery of Structures in Complex Data".

%----------------------------------------------------------------
\section{Wahl degenerations of $p_g=1$ surfaces} \label{s2}

\textit{Cyclic quotient singularities} will be denoted by $\frac{1}{m}(1,q)$, meaning the isomorphism class of the germ at the origin of the quotient of $\C^2$ by the action $(x,y)\mapsto (\mu x, \mu^q y)$, where $\mu$ is a primitive $m$-th root of $1$, and $q$ is an integer with $0<q<m$ and gcd$(q,m)=1$. We refer to \cite[2.1]{U16b} or \cite[III Sect. 5]{BHPV04} for basic facts. Their minimal resolution is achieved by replacing the singular point by a chain of smooth rational curves $\{E_1,\ldots,E_{\ell}\}$ with $E_i^2=-b_i<-1$, where one reads the $b_i$ from the \textit{Hirzebruch-Jung continued fraction} $$ \frac{m}{q} = b_1 - \frac{1}{b_2 - \frac{1}{\ddots - \frac{1}{b_{\ell}}}} =: [b_1, \ldots ,b_{\ell}].$$ The \textit{length} of $\frac{1}{m}(1,q)$ is $\ell$. If $\sigma \colon \tilde{Y} \to Y=\frac{1}{m}(1,q)$ is the minimal resolution, then the \textit{discrepancies} of the $E_i$ are the coefficients of the $E_i$ in the difference $K_{\tilde{Y}}-\sigma^*(K_Y)$. Discrepancies are rational numbers in the interval $(-1,0]$.

Let $\D$ be the smooth analytic germ of a curve. For a surface $X$ with only quotient singularities, a deformation $(X \subset \X) \to (0 \in \D)$ of $X$ is called a \textit{smoothing} if its general fiber is smooth. It is a \textit{$\Q$-Gorenstein smoothing} if in addition $K_{\X}$ is $\Q$-Cartier. A \textit{T-singularity} is a quotient singularity which admits a $\Q$-Gorenstein smoothing \cite[Def. 3.7]{KSB88}. As shown in \cite[Prop. 3.10]{KSB88}, T-singularities are either rational double points (RDP) or cyclic quotient singularities of type $\frac{1}{dn^2}(1,dna-1)$ with $0<a<n$ and gcd$(a,n)=1$, where the dimension of its $\Q$-Gorenstein deformation space is $d$. A \textit{Wahl singularity} is a non RDP T-singularity with $d=1$ \cite{Wahl81}. Equivalently, a Wahl singularity is a cyclic quotient singularity which admits a smoothing whose Milnor number is equal to zero. In \cite{Wahl81}, Wahl describes how to identify singularities $\frac{1}{n^2}(1,na-1)$ with gcd$(a,n)=1$ from its minimal resolution (see \cite[Prop. 3.11]{KSB88}), which we call \textit{Wahl chain}: 
\textbf{(i)} $[4]$ is a Wahl chain, \textbf{(ii)} if $[b_1,\ldots,b_{\ell}]$ is a Wahl chain, then $[b_1+1,\ldots,b_{\ell},2]$ and $[2,b_1,\ldots,b_{\ell}+1]$ are both Wahl chains, and \textbf{(iii)} any Wahl chain is obtained by starting with (i) and iterating (ii).

The discrepancies of Wahl singularities can be easily computed via a Fibonacci sort of algorithm (see e.g. \cite[2.3]{UV21}), which also implies the following relation between the \textit{index} $n$ of the singularity and its length $\ell$:   If $\mathcal F_i$ is the i-th Fibonacci number defined as $\mathcal F_{-1}=\mathcal F_{0}=1$ and $\mathcal F_i = \mathcal F_{i-1} + \mathcal F_{i-2}$, then $n \leq \mathcal F_{\ell}$. Equality holds if and only if the Wahl chain is $[3,\ldots,3,5,3,\ldots,3,2]$ or $[2,3,\ldots,3,5,3,\ldots,3]$.

We will abbreviate the information of a normal projective surface $X$ with only Wahl singularities and a $\Q$-Gorenstein smoothing of $X$ over $\D$ as a \textit{W-surface}, whose precise definition is as follows (see \cite[Section 2]{U16a}).

\begin{definition}
A \textit{W-surface} is a normal projective surface $X$ together with a proper deformation $(X \subset \X) \to (0 \in \D)$ such that
\begin{enumerate}
\item $X$ has at most Wahl singularities.
\item $\X$ is a normal complex $3$-fold with $K_{\X}$ $\Q$-Cartier.
\item The fiber $X_0$ is reduced and isomorphic to $X$.
\item The fiber $X_t$ is nonsingular for $t\neq 0$.
\end{enumerate}
The W-surface is said to be \textit{smooth} if $X$ is nonsingular.
\label{wsurf}
\end{definition}

\begin{remark}
One could have also considered more general smoothings $(X \subset \X) \to (0 \in \D)$ where $X$ has just quotient singularities. But we can always replace that situation with a W-surface via base change, simultaneous resolutions, P-resolutions, and M-resolutions (see \cite[Lemma 5.2]{HTU13}). Hence $\Q$-Gorenstein smoothings of surfaces with only Wahl singularities are the key deformations in this type of degenerations.
\end{remark}

For a W-surface the invariants $q(X_t)$, $p_g(X_t)$, $K_{X_t}^2$, $\chi_{\text{top}}(X_t)$ remain constant for every $t \in \D$. A W-surface is \textit{minimal} if $K_X$ is nef, and so $K_{X_t}$ is nef for all $t$. If a W-surface is not minimal, then we can run explicitly the MMP relative to $\D$ \cite{HTU13}, arriving to a minimal model or other outcomes as explained in \cite[Sect. 2]{U16a}. When $K_X$ is nef and big, the canonical model of $(X \subset \X) \to (0 \in \D)$ has only T-singularities (RDP and $\frac{1}{dn^2}(1,dna-1)$ are both possible of course). For details we refer to \cite[Sect. 2]{U16a} and \cite[Sect. 2 and 3]{U16b}.

We now explore closely the situation $p_g=1$, $q=0$.

\begin{proposition}
Let $X$ be a minimal singular W-surface (indeed singular) with $p_g(X)=1$ and $q(X)=0$. Let $\phi \colon \tilde{X} \to X$ be its minimal resolution, and let $\pi \colon \tilde{X} \to S$ be a composition of blow-ups so that $S$ has no $(-1)$-curves. Then $p_g(S)=1$, $q(S)=0$, $\pi(\exc(\phi))$ is a nonempty configuration of rational curves, and $S$ is one of the following:
\begin{enumerate}
\item A K3 surface.
\item A Kodaira dimension $1$ elliptic surface $S \to \P^1$ with at least one multiple fiber. If moreover $K_X$ is ample, then $\exc(\phi)$ contains some multiple section of $S \to \P^1$.
\item A surface of general type with $K_S^2 < K_X^2$.
\end{enumerate}
\label{q=0pg=1}
\end{proposition}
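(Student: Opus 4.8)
The plan is to exploit that a W-surface has constant invariants under $\Q$-Gorenstein smoothing, so from $p_g(X_t)=1$, $q(X_t)=0$, $K_{X_t}^2 \geq 1 > 0$ one already knows $X_t$ is a minimal surface of general type. The first step is to transfer these numbers to the minimal resolution $\tilde X$: since $X$ has only Wahl (in particular rational) singularities, $\phi_*\O_{\tilde X}=\O_X$ and $R^i\phi_*\O_{\tilde X}=0$ for $i>0$, so $p_g(\tilde X)=p_g(X)=1$ and $q(\tilde X)=q(X)=0$; these are birational invariants, hence $p_g(S)=1$ and $q(S)=0$ as well. Next, since $S$ is obtained from $\tilde X$ by blowing down all $(-1)$-curves (iteratively), $S$ is a minimal nonsingular surface with $p_g=1$, $q=0$. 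The Enriques--Kodaira classification then restricts $\kappa(S)$: we cannot have $\kappa(S)=-\infty$ because a minimal rational or ruled surface has $p_g=0$; the case $\kappa(S)=0$ with $p_g=1$, $q=0$ forces $S$ to be a K3 surface (Enriques surfaces have $p_g=0$, abelian and bielliptic have $q>0$); $\kappa(S)=1$ gives a properly elliptic surface $S\to B$, and the invariants $p_g=1$, $q=0$ force $B=\P^1$ and, by the canonical bundle formula, at least one multiple fiber (otherwise $p_g$ would be $0$ after accounting for $\chi(\O_S)=1$). Finally $\kappa(S)=2$ gives a minimal surface of general type, which is case (3) up to the inequality $K_S^2 < K_X^2$.

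The step requiring actual input is establishing $\pi(\exc(\phi))\neq\emptyset$ and that it consists of rational curves, plus the two refinements in cases (2) and (3). That $\exc(\phi)$ is a union of smooth rational curves is immediate: the exceptional divisor of the minimal resolution of a quotient singularity is a tree of $\P^1$'s, so its image under the further blow-down $\pi$ is a (possibly singular, but still rational-componented) configuration of rational curves; nonemptiness is exactly the hypothesis that $X$ is singular (``indeed singular''), since a Wahl singularity has nonempty exceptional locus and $\pi$ contracts only $(-1)$-curves, which cannot swallow the whole resolution chain of a Wahl singularity (whose self-intersections are all $\leq -2$). For case (3), the inequality $K_S^2 < K_X^2$: one computes $K_X^2$ as an orbifold self-intersection, $K_X^2 = K_{\tilde X}^2 + \sum (\text{positive correction per Wahl point})$, while $K_{\tilde X}^2 = K_S^2 - (\#\text{blow-ups})$; since $X$ is genuinely singular there is at least one Wahl point contributing a strictly positive term, and one must check this outweighs the blow-up deficit — here one uses that each Wahl chain arises by the blow-up calculus (i),(ii),(iii), so the chain can be created by blowing up, and a careful bookkeeping of $K^2$ along those blow-ups gives the strict inequality. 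Alternatively, and more cleanly, $K_X^2$ for a minimal W-surface equals $K_{X_t}^2$, and $X_t$ is of general type while $S$ is of general type with $S$ dominated birationally by a resolution of a degeneration, so $K_S^2 \le K_{X_t}^2 = K_X^2$ with equality excluded because the degeneration is nontrivial (there is a genuine Wahl point, so the central fiber is not the smooth model).

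For case (2), the claim that $\exc(\phi)$ contains a multiple section when $K_X$ is ample: suppose not, so every component of $\exc(\phi)$ maps into fibers of $S\to\P^1$ after transport to $\tilde X$ composed with the elliptic fibration. Then the canonical class $K_X$, which is a pushforward of $K_{\tilde X}$ adjusted by discrepancies supported on $\exc(\phi)$, would be numerically a combination of fiber-type classes, contradicting ampleness (an ample class must intersect a general fiber positively, but a multiple fiber pulled back from $\P^1$ has self-intersection zero and the canonical class of a properly elliptic surface is vertical up to the multiple-fiber contributions — so $K_X \cdot (\text{general fiber image}) = 0$, impossible for ample $K_X$ unless the resolution/blow-down introduces a horizontal curve, i.e. a multiple section). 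I expect this case-(2) argument to be the main obstacle: it requires matching the multiple-fiber structure of $S$ with the intersection theory on $X$ carefully, and pinning down why exactly a \emph{section} (necessarily multiple, because the fibration has multiple fibers and a section meeting a multiple fiber $mF$ forces multiplicity) must appear rather than merely some horizontal curve. The rest is classification plus standard birational invariance of $p_g,q$ under rational-singularity resolution.
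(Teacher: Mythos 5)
Your overall architecture (rationality of Wahl singularities to transfer $p_g,q$, Enriques classification, canonical bundle formula for $\kappa=1$, and the ``zero curve from a general fiber'' argument for the multisection in case (2)) matches the paper. But there is a genuine gap at the step you treat as ``immediate'': the nonemptiness of $\pi(\exc(\phi))$. You claim that $\pi$, being a composition of blow-downs of $(-1)$-curves, ``cannot swallow the whole resolution chain of a Wahl singularity (whose self-intersections are all $\leq -2$).'' This is false. After contracting a $(-1)$-curve, the self-intersections of the curves it meets increase, so curves that are not $(-1)$-curves in $\tilde{X}$ become contractible at later stages; an entire Wahl chain can perfectly well be $\pi$-exceptional. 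Indeed this is exactly how Wahl chains are manufactured throughout this paper: blowing up a point and then repeatedly blowing up on the exceptional locus produces chains of curves of self-intersection $\leq -2$ (e.g. a $[4]$, or $[5,2]$, etc.) all of whose components are contracted by the inverse composition $\pi$. So the singularity of $X$ alone does not force $\pi(\exc(\phi))\neq\emptyset$, and your argument never uses the minimality hypothesis, which is essential here.

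The correct argument is numerical. Write $\phi^*(K_X)\equiv \pi^*(K_S)+\sum_i\beta_iE_i+\sum_j\alpha_jC_j$ with $\beta_i\in\Z_{>0}$ and $\alpha_j\in(0,1)$ (minus the discrepancies). If every $C_j$ were $\pi$-exceptional, the whole correction term would be supported on $\exc(\pi)$, whose intersection matrix is negative definite (Mumford), giving $K_X^2=K_S^2-N$ with $N>0$. Since the W-surface is minimal, $K_X$ is nef and $K_X^2\geq 0$, which contradicts $K_S^2=0$ when $\kappa(S)\leq 1$, and contradicts Kawamata's inequality $K_S^2<K_X^2$ when $S$ is of general type. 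Relatedly, your two sketches of that last inequality $K_S^2<K_X^2$ (the ``bookkeeping'' and the ``equality excluded because the degeneration is nontrivial'') are both too vague to stand on their own; the paper simply invokes \cite[Lemma 2.4]{K92}, and if you want to avoid that citation you must actually prove strictness, which does not follow merely from the central fiber being singular.
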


\begin{proof}
In \cite[Prop.2.3]{RU19} we have a general analogue statement for $X$ with $1$ Wahl singularity, here we may have many. As Wahl singularities are rational, we have that $p_g$ is preserved, and so $S$ cannot be a ruled surface, $p_g(S)=1$, and $q(S)=0$. By the Enriques' classification, we know that $S$ is either a K3 surface, a Kodaira dimension $1$ surface, or a surface of general type. By \cite[Lemma 2.4]{K92}, if $S$ is of general type, the $K_S^2 < K_X^2$.

We can write $$\phi^*(K_X) \equiv \pi^*(K_S) + \sum_i \beta_i E_i + \sum_j \alpha_j C_j$$ where $E_i$ are exceptional curves of $\pi$, $C_j$ are exceptional curves of $\phi$, $\beta_i$ are positive integers, and $\alpha_j$ are minus the discrepancies of $C_j$, and so positive rational numbers in $(0,1)$. If $\pi(\exc(\phi))$ had no curves, then the $C_j$ would all be contained in the $E_i$. By Mumford's criterion (cf. \cite[(2.1)Theorem, III]{BHPV04}), we would have $0 \leq K_X^2 = K_S^2 - N$ where $N$ is a positive rational number. Hence we would obtain a contradiction if $\kappa(S) \leq 1$, since $K_S^2=0$. For $S$ of general type, we would obtain  $0 \leq K_X^2 = K_S^2 - N< K_X^2 -N$ which is a contradiction. Therefore $\pi(\exc(\phi))$ is a nonempty configuration of rational curves in $S$.

Finally let $\kappa(S)=1$. By the canonical class formula (cf. \cite[(12.3)Corollary, V]{BHPV04}), we can say that $K_S \sim \sum_i (m_i-1)F_i$ where $m_i F_i$ are the multiple fibers of $S \to \P^1$ (with multiplicities $m_i>1$). As $K_S$ is not trivial, there must be multiple fibers. If $K_X$ is in addition ample, then we must have a multiple section in $\pi(\exc(\phi))$, since otherwise we obtain a zero curve for $K_X$ by pulling back a general fiber.
\end{proof}

In \cite[Sect. 4]{K92}, Kawamata characterizes the central singular fiber of a minimal W-surface with general fiber of Kodaira dimension $0$ or $1$. As an example, we can start with a K3 surface $S$ with an elliptic fibration $S \to \P^1$ which has a $I_1$ fiber $F$. As in \cite[Sect. 4]{U16b}, we can construct any given Wahl chain from successive blow-ups over the node of $F$, and then contract it to a surface $X$. One can easily prove that $X$ has no-local-to-global obstructions to deform (see below), $K_X$ is nef, and the general fiber of a $\Q$-Gorenstein smoothing is a Kodaira dimension $1$ surface with an elliptic fibration with one smooth multiple fiber, whose multiplicity is the index of the singularity (see \cite[Ex. 4.5]{K92}). In comparison, the situation when the smooth fiber is of general type is probably impossible to classify.

One can construct examples for each of the cases in Proposition \ref{q=0pg=1}. For instance, we will see many from K3 surfaces in this paper. For Kodaira dimensions $1$ and $2$, the situation is more difficult but there are plenty of examples. We will show some via MMP starting with the situation of a K3 W-surface, using the identification strategy in \cite{U16b}.

\begin{remark}
If moreover the general fiber $X_t$ in Proposition \ref{q=0pg=1} part (2) is simply-connected, then the surface $S$ is simply-connected (see e.g. \cite[proof of Prop.6.1]{RTU17}). In this way, the elliptic fibration $S \to \P^1$ may have at most two multiple fibers with coprime multiplicities and it has a singular fiber (see e.g. \cite[Theorem 4]{X91}).
\label{pi1=1}
\end{remark}

If we fix the $K_X^2$ for W-surfaces $X$ with ample canonical class, then we have finitely many possibilities for Wahl singularities in $X$ by Alexeev's boundedness. The concrete finite list is unknown in general, even if we fix more invariants. However there are fairly good bounds on the length of the Wahl chains (and so on the index of the singularity) for approaching this problem. The bounds are better for low $K^2$, and so we will attempt classification in the case of $p_g=1$, $q=0$, and $K^2=1$.

In the next proposition, we recall from \cite{RU19} a broad classification for $p_g=1$ W-surfaces with one Wahl singularity and ample canonical class. In complete generality, it can be proved that for surfaces with more than one Wahl singularities we have the general bound $$\ell \leq 4 K^2+1.$$ This is an improvement of the bound $\ell \leq 4 K^2+7$ found by Evans and Smith in \cite{ES20}, which can be achieved through minor modifications of two propositions in \cite{ES20} using ideas from \cite{RU19}. This will be done somewhere else.

\begin{proposition}
Let $X$ be a W-surface with one Wahl singularity, $p_g=1$, $q=0$, and $K_X$ ample. Let $S$ be the minimal surface defined in Proposition \ref{q=0pg=1}. Then,
\begin{enumerate}
\item If $S$ is a K3 surface, then $\ell \leq 4K_X^2+1$.
\item If $S$ is properly elliptic surface, then $\ell \leq 4K_X^2-1$.
\item If $S$ is of general type, then $$\ell \leq 4(K_X^2-K_S^2)-3$$ when $K_X^2-K_S^2>1$, otherwise $\ell \leq 2$.
\end{enumerate}
\label{OneWahlq=0pg=1}
\end{proposition}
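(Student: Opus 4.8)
The plan is to bound the length $\ell$ of the single Wahl chain $[b_1,\dots,b_\ell]$ by pulling back the canonical class along $\phi\circ$(blow-ups) as in the proof of Proposition \ref{q=0pg=1}, and then extracting a numerical inequality from the self-intersection of $\phi^*(K_X)$. Write $\pi\colon \tilde X\to S$ for the composition of blow-ups contracting $\exc(\phi)$ onto the configuration $\pi(\exc(\phi))$, and recall
\[
\phi^*(K_X)\equiv \pi^*(K_S)+\sum_i \beta_i E_i+\sum_j \alpha_j C_j,
\]
with $\beta_i$ positive integers and $\alpha_j=-\mathrm{discr}(C_j)\in(0,1)$ the negatives of the discrepancies of the Wahl chain. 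Squaring and using $\phi^*(K_X)\cdot C_j=0$, $\phi^*(K_X)\cdot E_i=0$, one gets $K_X^2=K_S^2-\sum_i\beta_i(E_i\cdot(\text{stuff}))-\sum_j\alpha_j(C_j\cdot\pi^*K_S)$; the cleanest route is to contract the $C_j$ first, i.e. use the Wahl resolution data directly. Concretely, from the Fibonacci-type discrepancy algorithm one has the identity (valid for any Wahl chain of length $\ell$)
\[
\sum_{j=1}^{\ell}\alpha_j(b_j-2)\;=\;1-\frac{2}{n}\cdot(\text{something})\quad\text{and}\quad \sum_j \alpha_j \;=\; \ell - (\text{trace-type quantity}),
\]
but the key structural fact actually needed is the standard one: contracting the Wahl chain contributes $-(\ell-1)+\frac{?}{}$ — more precisely, $K_{\tilde X}^2 = K_X^2 - (\text{defect})$ where the defect grows linearly in $\ell$. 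The first real step is therefore to make this ``defect grows like $\ell$'' precise: I would show, using the continued-fraction description of Wahl chains in (i)--(iii) and the discrepancy algorithm cited from \cite{UV21}, that
\[
K_{\tilde X}^2 \;\le\; K_X^2 - \tfrac{1}{4}(\ell-?)
\]
is false in that naive form, and instead that the right linear bound comes from combining $K_{\tilde X}^2\ge K_{\min}^2$ on the minimal model side with $p_g(\tilde X)=1$, $q(\tilde X)=0$.

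The second step is to split into the three cases for $S$ as in Proposition \ref{q=0pg=1} and run Noether's inequality / the $\ell$-versus-$b_i$ bookkeeping. Since $K_{\tilde X}\equiv \pi^*K_S + \sum_i(\beta_i\text{-type})E_i$ and $\tilde X\to X$ only contracts the Wahl chain, we have $K_X^2 = K_{\tilde X}^2 + \big(\text{contribution of the chain}\big)$; each blow-up in $\pi$ drops $K^2$ by $1$, so if $\pi$ is a composition of $m$ blow-ups then $K_{\tilde X}^2 = K_S^2 - m$, giving $K_X^2 = K_S^2 - m + (\text{chain contribution})$. The chain of length $\ell$ lives inside the total transform of the $m$ exceptional curves and the original configuration curves in $S$; a counting argument (the chain has $\ell$ curves, the configuration in $S$ has $r$ curves, and blow-ups create nodes) yields $m\ge \ell - r + (\text{number of base configuration curves used})$. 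For (1), $S$ a K3 surface, $K_S^2=0$, and one inputs the bound $m \ge \tfrac{\ell - ?}{?}$ refined by the fact that the configuration in $S$ consists of $(-2)$-curves; the arithmetic should collapse to $\ell\le 4K_X^2+1$, with the extremal chains $[3,\dots,3,5,3,\dots,3,2]$ (the ones achieving $n=\mathcal F_\ell$) being the limiting case. For (2), $S$ properly elliptic with $K_S\sim\sum(m_i-1)F_i$ nonzero but $K_S^2=0$, the presence of a \emph{multiple section} in $\pi(\exc(\phi))$ (forced by ampleness of $K_X$, per Proposition \ref{q=0pg=1}(2)) improves the count by $2$, yielding $\ell\le 4K_X^2-1$. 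For (3), $S$ of general type, one has $K_S^2\ge 1$ and $K_X^2-K_S^2>0$; replacing ``$0$'' by ``$K_S^2$'' everywhere and redoing the count gives $\ell \le 4(K_X^2-K_S^2)-3$ when the gap exceeds $1$, and the degenerate bound $\ell\le 2$ when $K_X^2-K_S^2=1$ (here there is barely any room for blow-ups, so only the shortest non-RDP chain $[4]$ or a length-$2$ chain can fit).

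The main obstacle is the precise chain-versus-blow-up count, i.e. controlling how economically a given Wahl chain of length $\ell$ can appear inside the SNC/nodal configuration obtained by blowing up $S$. The naive bound ``$\ell$ curves require $\ge \ell$ blow-ups'' is too weak by a constant factor of roughly $4$; the improvement to the sharp linear coefficient $4$ must come from a finer analysis of the continued fractions $[b_1,\dots,b_\ell]$ — essentially, each $b_i=2$ in the chain (and Wahl chains are mostly $2$'s and $3$'s by (i)--(iii)) forces extra blow-ups on \emph{neighbouring} curves, so a single blow-up on a configuration curve can contribute to lengthening the chain by a bounded amount. I expect this is exactly the content imported from \cite{RU19} (and paralleling the Evans--Smith bound $\ell\le 4K^2+7$ from \cite{ES20} mentioned in the excerpt, here sharpened in the $p_g=1$ setting using the extra $p_g=1$ structure of $S$). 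Once that counting lemma is in hand, cases (1)--(3) are a short computation; the $p_g=1$, $q=0$ hypotheses enter only to pin down $K_S^2=0$ in cases (1)--(2) and $K_S^2\ge 1$ with Noether's inequality in case (3).
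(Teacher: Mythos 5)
The paper's own proof of this proposition is a one-line citation: it is exactly the main theorem of \cite{RU19} combined with Proposition \ref{q=0pg=1}, which pins down the three possibilities for $S$. Your proposal correctly identifies the case division coming from Proposition \ref{q=0pg=1}, the role of the multiple section in case (2), and the fact that the real content is a ``chain-versus-blow-up'' counting lemma living in \cite{RU19}. But as written it is not a proof: the central lemma is never established (your text contains literal placeholders such as $\tfrac{\ell-?}{?}$ and ``$1-\tfrac{2}{n}\cdot(\text{something})$'', and you explicitly concede that the key step is ``expected'' to be the content of \cite{RU19}). Since that lemma \emph{is} the proposition, up to the bookkeeping of which minimal model occurs, deferring it leaves essentially nothing proved.

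There is also a concrete directional error in the sketch. The clean identities here are $K_{\tilde X}^2=K_S^2-m$ (for $m$ blow-ups) and $K_X^2=K_{\tilde X}^2+\ell$ (contracting a Wahl chain of length $\ell$ raises $K^2$ by exactly $\ell$, since $\sum A_i^2=-3\ell-1$; this is the computation in the proof of Proposition \ref{chern}). Hence $K_X^2-K_S^2=\ell-m$, and the asserted bound $\ell\le 4(K_X^2-K_S^2)+c$ is equivalent to an \emph{upper} bound $m\le\tfrac{3\ell+c}{4}$ on the number of blow-ups. Your counting argument instead proposes \emph{lower} bounds of the form $m\ge \ell-r+\cdots$; a lower bound on $m$ controls $K_X^2$ from above in terms of $\ell$, which is the opposite of what is claimed. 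The upper bound on $m$ is where the ampleness of $K_X$ and the fine structure of the discrepancies $\alpha_j\in(0,1)$ along the chain enter (this is the analysis of \cite{RU19}, which also explains why the bound improves by $2$ in case (2) and degenerates to $\ell\le 2$ when $K_X^2-K_S^2=1$); none of that is supplied. Either carry out that analysis or, as the authors do, simply invoke the main theorem of \cite{RU19} together with Proposition \ref{q=0pg=1}.
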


\begin{proof}
This is the main theorem in \cite{RU19} and Proposition \ref{q=0pg=1}.
\end{proof}

We point out that the proof of Proposition \ref{OneWahlq=0pg=1} does not require any deformations. Better bounds can be obtained when the configuration of exceptional curves has no ``long diagrams" (see \cite[Introduction]{RU19}), and \cite{RU19} contains a classification in the case when equality holds.

We now consider the smallest case $K^2=1$. As pointed out in the introduction, Catanese proves in \cite{C80} that the moduli space of $K^2=p_g=1$ surfaces of general type is an irreducible rational variety of dimension $18$. These surfaces are simply-connected, and they can be seen as weighted complete intersections $(6,6)$ in $\P(1,2,2,3,3)$. In \cite{FPR17} it is proved that the moduli space of Gorenstein such surfaces (which also contain the smooth ones) is an irreducible rational variety of dimension $18$. In \cite[Sect. 5]{FPR17} they show several examples, normal and non-normal (see also \cite{DoRo21}). They also have some non-Gorenstein normal examples with Wahl singularities of type $\frac{1}{4}(1,1)$. The complete closure of the moduli space of $K^2=p_g=1$ surfaces of general type in the KSBA moduli space seems to be unknown. Unobstructed surfaces with one Wahl singularity would produce divisors in this KSBA compactification. The following theorem classifies those surfaces, to then shortly realize some of them.

\begin{theorem}
Let $X$ be a W-surface with $K_X^2=1$, $p_g=1$ and $q=0$, which has one Wahl singularity and $K_X$ ample. If $\phi \colon \tilde{X} \to X$ is the minimal resolution (with exceptional divisor $\exc(\phi)$), then $X$ belongs to the following list:

\begin{itemize}
\item[A.] $\kappa(\tilde{X})=1$:

\begin{enumerate}
\item[$(A1)$] $\tilde{X}$ is a $D_{3}$, $\exc(\phi)=[4]$, and the $(-4)$-curve is a $3$-section.

\item[$(A2)$] $\tilde{X}$ is a $D_{2}$, $\exc(\phi)=[4]$, and the $(-4)$-curve is a $4$-section.

\item[$(A3)$] $\tilde{X}$ is the blow-up at one point of a $D_{2}$, and $\exc(\phi)=[5,2]$. The $(-1)$-curve intersects the $(-5)$-curve with multiplicity $2$ (disjoint to the $(-2)$-curve).

\item[$(A4)$] The surface $\tilde{X}$ is the blow-up of a $D_{2}$ twice at the node of a multiplicity $2$ $I_1$ fiber, and $\exc(\phi)=[3,5,2]$. The surface $D_{2}$ contains a $(-3)$-curve which is a $2$-section.

\end{enumerate}

\item[B.] $\kappa(\tilde{X})=0$, and so $\tilde{X}$ is a K3 surface blown-up:

\begin{enumerate}
\item[$(B1)$] once, and $\exc(\phi)=[5,2]$. The $(-1)$-curve intersects the $(-5)$-curve with multiplicity $3$.

\item[$(B2)$] twice, and $\exc(\phi)=[2,5,3]$. There is one $(-1)$-curve touching the $(-5)$-curve with multiplicity $2$, and there is another $(-1)$-curve intersecting the $(-5)$-curve and the $(-3)$-curve at one point.

\item[$(B3)$] twice, and $\exc(\phi)=[6,2,2]$. There are two disjoint $(-1)$-curves intersecting the $(-6)$-curve with multiplicity $2$ each.

\item[$(B4)$] three times, and $\exc(\phi)=[2,6,2,3]$. There is a $(-1)$-curve intersecting the first $(-2)$-curve and the $(-6)$-curve at one point each, and there is a $(-1)$-curve intersecting the $(-6)$-curve and the $(-3)$-curve at one point each.

\item[$(B5)$] three times, and $\exc(\phi)=[3,5,3,2]$. There is a $(-1)$-curve intersecting the first $(-3)$-curve and the $(-5)$-curve at one point each, and there is a $(-1)$-curve intersecting the $(-5)$-curve and the $(-2)$-curve at one point each.

\item[$(B6)$] four times, and $\exc(\phi)=[2,2,6,2,4]$. There is a $(-1)$-curve intersecting the first $(-2)$-curve and the $(-6)$-curve at one point each, and there is a $(-1)$-curve intersecting the $(-4)$-curve with multiplicity $2$.

\end{enumerate}
\end{itemize}

\label{OneWahlK=1}
\end{theorem}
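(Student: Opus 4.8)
The plan is to classify all W-surfaces $X$ with $K_X^2=1$, $p_g=1$, $q=0$, one Wahl singularity and $K_X$ ample by combining the numerical constraints from Proposition \ref{q=0pg=1} and Proposition \ref{OneWahlq=0pg=1} with an explicit birational analysis on the minimal resolution $\tilde X$. First I would apply Proposition \ref{q=0pg=1}: the minimal model $S$ of $\tilde X$ is a K3 surface, a properly elliptic surface with a multiple fiber (a $D_n$ in the paper's notation), or a surface of general type with $K_S^2<K_X^2=1$, i.e. $K_S^2\le 0$, which is impossible for a minimal surface of general type. So $S$ is either a K3 surface ($\kappa(\tilde X)=0$) or properly elliptic ($\kappa(\tilde X)=1$), giving the two branches A and B. Next, Proposition \ref{OneWahlq=0pg=1} bounds the length of the single Wahl chain: $\ell\le 4K_X^2+1=5$ in the K3 case and $\ell\le 4K_X^2-1=3$ in the properly elliptic case. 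Since the index $n$ of a Wahl singularity satisfies $n\le \mathcal F_\ell$, these bounds make the list of candidate Wahl chains finite, and one enumerates them via the Wahl-chain recursion (i)–(iii): for $\ell\le 5$ the possibilities are $[4]$, $[5,2]$, $[2,5]$, $[6,2,2]$, $[2,3,5]$ (and its reversals), $[3,5,2]$, $[2,5,3]$, $[7,2,2,2]$, $[2,6,2,3]$, $[3,5,3,2]$, $[2,2,6,2,4]$, etc., up to reversal.

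The heart of the argument is the blow-up bookkeeping. Write $\phi^*(K_X)\equiv \pi^*(K_S)+\sum_i\beta_i E_i+\sum_j\alpha_j C_j$ as in the proof of Proposition \ref{q=0pg=1}, where $\pi\colon\tilde X\to S$ is the contraction of the $(-1)$-curves, the $\alpha_j\in(0,1)$ are minus the discrepancies of the Wahl chain (computable by the Fibonacci-type algorithm), and the $\beta_i$ are positive integers. Squaring and using $K_S^2=0$, $K_X^2=1$ gives a single numerical identity relating the number $k$ of blow-ups, the self-intersections $b_j$ of the chain, the discrepancies $\alpha_j$, and the intersection multiplicities of the $(-1)$-curves with the chain and with the pulled-back configuration. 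Combined with the constraint that $\exc(\phi)$ must be contractible to the prescribed Wahl singularity and that $K_X$ is ample (so $K_X\cdot(\text{image of any curve})>0$, which via the pullback formula controls how the $(-1)$-curves can meet the chain), this pins down, chain by chain, exactly how many blow-ups are needed and where the exceptional $(-1)$-curves sit. In the K3 branch one also needs $\pi(\exc(\phi))$ to be a configuration of $(-2)$-curves on a K3 surface with the right arithmetic; in the properly elliptic branch one uses the canonical bundle formula $K_S\sim\sum(m_i-1)F_i$ together with Remark \ref{pi1=1}-type restrictions, the fact (from Proposition \ref{q=0pg=1}(2)) that the configuration must contain a multiple section, and the $D_n$ classification of properly elliptic $p_g=1$ surfaces to identify $S$ as $D_2$ or $D_3$ and to read off which curve is a $2$- or $3$-section.

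Concretely, for each candidate chain I would: (a) determine $k$, the minimal number of blow-ups, from the numerical identity above (e.g. $[4]$ needs $k=0$ so $\tilde X=S$; $[5,2]$ needs $k=1$; three-step chains like $[2,6,2,3]$ or $[3,5,3,2]$ need $k=3$; $[2,2,6,2,4]$ needs $k=4$); (b) analyze the possible positions of the $(-1)$-curves by tracking how blowing down affects the self-intersections and the $\alpha_j\beta_i$ weighting, discarding configurations where $K_X$ fails to be nef/ample or where the resulting $S$ has the wrong Kodaira dimension; (c) in the surviving cases, identify $S$ (K3, $D_2$, or $D_3$) using $\kappa$, the elliptic fibration structure, and for the properly elliptic cases the location of the multiple fiber and the section. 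This process produces exactly the four cases $(A1)$–$(A4)$ and six cases $(B1)$–$(B6)$; chains such as $[2,3,5]$ and its mirror, or $[7,2,2,2]$, get ruled out because the forced blow-ups either violate ampleness of $K_X$ or cannot be realized with $S$ minimal of the required type, and longer chains are excluded by the length bound. I expect the main obstacle to be part (b): organizing the casework on where the $(-1)$-curves attach so that nothing is missed and no spurious case survives — in particular, handling the subtle interaction between the integrality of the $\beta_i$, the ampleness inequalities for $K_X$, and the requirement that the contraction of $\exc(\phi)$ produce precisely the Wahl chain in question (and not a longer chain or a chain plus extra $(-2)$-curves that would change $K_X^2$).
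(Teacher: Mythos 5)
Your skeleton matches the paper's: split into $\kappa(\tilde X)=0$ or $1$ via Proposition \ref{q=0pg=1} (the general-type branch dying because $K_S^2<K_X^2=1$ is impossible for a minimal surface of general type), bound $\ell$ by Proposition \ref{OneWahlq=0pg=1}, then classify chain by chain. The paper, however, outsources the two hard pieces of casework --- the length-$3$ elliptic case to \cite[Theorem 3.2]{RU19} and the whole K3 branch to the analysis of \cite[Theorem 2.1]{DRU20} (with the Enriques surface replaced by a K3, which kills one of the seven configurations found there) --- whereas you propose to redo them by hand and explicitly defer the hardest step. Beyond that deferral, two concrete problems remain. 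First, in branch A you invoke ``Remark \ref{pi1=1}-type restrictions'' to control the multiple fibers, but that remark requires the general fiber $X_t$ to be simply-connected. This is exactly Catanese's theorem \cite{C80} that every minimal surface of general type with $K^2=p_g=1$ is simply-connected, applied to $X_t$; without quoting it you cannot conclude that $S\to\P^1$ has at most two multiple fibers with coprime multiplicities, cannot identify $S$ as a $D_2$ or $D_3$, and cannot set up the computation $2=\Gamma\cdot K_S=u(2ab-a-b)$ whose only solutions give (A1)--(A3).

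Second, and more seriously, your stated constraint in the K3 branch --- that $\pi(\exc(\phi))$ must be ``a configuration of $(-2)$-curves'' --- is false and, taken literally, would derail the enumeration. In (B1) the $(-5)$-curve maps to a rational curve with an ordinary triple point and self-intersection $4$; in (B3) the $(-6)$-curve maps to a two-nodal rational curve of self-intersection $2$; in (B2) and (B6) some images are nodal $I_1$ fibers of self-intersection $0$. The correct constraint is only that each image is a rational curve $C$ on the K3 with $C^2=2p_a(C)-2$, where $p_a$ is dictated by the singularities created in the blow-down; insisting on smooth $(-2)$-images would wrongly eliminate (B1), (B2), (B3) and (B6). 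Smaller points: $[2,3,5]$ is not a Wahl chain (up to reversal the length-$3$ chains are $[6,2,2]$ and $[2,5,3]$), and at length $4$ you must also dispose of $[2,2,5,4]$, not only $[7,2,2,2]$; the elimination of these and of all but one length-$5$ chain is precisely the content left unexecuted. As written, the proposal is a plausible programme with the correct frame, but not yet a proof.
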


\begin{proof}
By Proposition \ref{q=0pg=1}, the only possibilities are $\kappa(\tilde{X})=1$ or $0$, since $K_X^2=1$.  By \cite{C80} and Remark \ref{pi1=1}, we know that $\tilde{X}$ is simply-connected. 

In the case $\kappa(\tilde{X})=1$, we have that the minimal model $S$ has an elliptic fibration $S \to \P^1$ with at most two coprime multiple fibers. Let $a$ and $b$ be the multiplicities, and so $K_S \sim (a-1)F_a + (b-1) F_b$, where $F_a$ and $F_b$ are the reduced multiple fibers. By Proposition \ref{OneWahlq=0pg=1}, we know that the length $\ell$ is at most $3$. The case $\ell=3$ is worked out in \cite[Theorem 3.2]{RU19}, which only gives (A4). For $[4]$ we have $\tilde{X}=S$, and for $[5,2]$ we have $\tilde{X}=Bl_{pt}(S)$, since $K_S^2=0$. In the case $[4]$, there must be a $(-4)$-curve $\Gamma$ which is a $uab$-section in $S$ that satisfies $$2 = \Gamma \cdot K_S = u(2ab-a-b).$$ This has no solutions unless $a$ or $b$ is $1$. Say $b=1$. Then either $a=3$, which means $\Gamma$ is a $3$-section and we are in case (A1), or $a=2$, where $\Gamma$ is a $4$-section and we are in case (A2). A similar analysis gives (A3) for the option $[5,2]$.

When $\kappa(\tilde{X})=0$, we have that $\ell \leq 5$, and we can do precisely the analysis in \cite[Theorem 2.1]{DRU20}, replacing the Enriques surface by a K3 surface. This way, we arrive at seven possible situations, which are pictured in \cite[Figure 2]{DRU20}. But one of them is obviously impossible for a K3 surface, where there is an $I_2$ configuration intersecting transversally at one point a $I_1$ curve. The $I_2$ would be a singular fiber for some elliptic fibration and the $I_1$ would be a singular section, which is not possible. All in all, we are left with (B1) -- (B6) in our claim.
\end{proof}

\begin{remark}
When we have a W-surface with many Wahl singularities and no-local-to-global obstructions to deform, then in particular there are W-surfaces with one singularity, and we are in Theorem \ref{OneWahlK=1}. This is done via partial $\Q$-Gorenstein smoothings. But if we do have obstructions, then those singularities may not even appear in the list of Theorem \ref{OneWahlK=1}. On the other hand, in terms of only lengths, we know that in general $\ell \leq 5$ for this case, and so we may have all Wahl chains up to length $5$ via obstructed surfaces with two or more Wahl singularities. At the end of this section we will illustrate what we know.
\label{boo}
\end{remark}

Classification of W-surfaces for higher $K^2$ is much harder. At the same time, much less is known about the corresponding moduli spaces. For example, the moduli space of $p_g=1$, $q=0$ surfaces with $K^2=2$ is not known completely. As it was said in the introduction, it can be showed that the torsion of the Picard group is either $\{0\}$ or $\Z/2$. For the latter, Catanese and Debarre \cite{CD80} proved that the moduli space of such surfaces is a rational irreducible variety of dimension $16$, and these surfaces have $\pi_1=\Z/2$. They also worked out various properties for torsion zero surfaces, including an irreducible $16$ dimensional component of the moduli space, but it is still open if this component is indeed the moduli of all torsion zero surfaces. For $p_g=1$, $q=0$ surfaces with $K^2=3$ much less is known, the torsion of the Picard group has order at most $4$, and the ones with torsion $\Z/3$ are described in \cite{M03}. All what is known for simply-connected $p_g=1$ surfaces of general type with $K^2 \geq 3$ are the examples in \cite{PPS13}.

We now show how to construct W-surfaces from nodal configurations (not necessarily SNC but only nodes as singularities) of rational curves in K3 surfaces. After that, we end this section applying Proposition \ref{obstruction} to construct most of the situations in Theorem \ref{OneWahlK=1}.

\begin{proposition}
Let $S$ be a K3 surface, and let $\AR$ be a nodal configuration of $r$ rational curves in $S$. Assume there is a composition of blow-ups $\pi \colon \tilde X \to S$ over some nodes and infinitely near nodes of $\AR$ (considering also nodes in irreducible curves), so that $\pi^*(\AR)$ contains disjoint Wahl Chains which are contracted to a surface $X$. Then the dimension of $H^2(X,T_X)$ is $r$ minus the dimension of the $\Q$-vector space generated by $\AR$ in the N\'eron-Severi group of $S$. In particular, if $\AR$ is linearly independent, then there are no-local-to-global obstructions to deform $X$.
\label{obstruction}
\end{proposition}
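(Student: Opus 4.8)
The plan is to compute $H^2(X, T_X)$ by relating the deformation theory of the singular surface $X$ to that of the K3 surface $S$ together with the configuration $\AR$. The key input is the standard exact sequence governing $\Q$-Gorenstein deformations: since $X$ has only Wahl singularities, which are rational, there is a local-to-global spectral sequence whose relevant piece gives
$$ H^2(X, T_X) \cong \mathrm{coker}\Big( H^1(\tilde{X}, T_{\tilde{X}}(-\log \exc(\phi))) \to \textstyle\bigoplus_j H^1_{\{p_j\}}(\cdots) \Big), $$
but more efficiently one uses the comparison with the pair. First I would pass to the minimal resolution $\phi \colon \tilde X \to X$ and then to $S$ via $\pi$. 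Because Wahl singularities contribute nothing to $H^2$ locally (the local $\Q$-Gorenstein deformation space of a Wahl singularity is smooth of dimension $1$, so $T^2_{\mathrm{QG}} = 0$ at each singular point), the obstruction space $H^2(X, T_X)$ (in the $\Q$-Gorenstein sense) is identified with $H^2$ of a logarithmic tangent sheaf on a smooth model. Concretely, after blowing up, the Wahl chains become part of $\pi^*(\AR)$, and the computation reduces to $H^2\big(S, T_S(-\log \AR)\big)$ — or rather its image — where $T_S(-\log \AR)$ is the sheaf of vector fields tangent to the (nodal) configuration $\AR$.

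Next I would write down the residue exact sequence on $S$:
$$ 0 \longrightarrow T_S(-\log \AR) \longrightarrow T_S \longrightarrow \bigoplus_{i=1}^r N_{C_i/S}' \longrightarrow 0, $$
where the quotient is a sum of sheaves supported on the $C_i$ (for a smooth rational curve $C_i$ with $C_i^2 = -2$ in a K3, $N_{C_i/S} = \O_{\P^1}(-2)$, which has $h^0 = 0$ and $h^1 = 1$; the nodal points only modify the sheaf at finitely many points and do not affect $H^1$ of the quotient in the relevant way, contributing the "number of curves" count). Taking cohomology and using that $S$ is a K3 surface — so $H^0(S, T_S) = 0$, $H^1(S, T_S) = \C^{20}$, and crucially $H^2(S, T_S) = H^2(S, \Omega^1_S)^\vee \otimes \cdots = 0$ by Serre duality since $H^0(S, \Omega^1_S \otimes \O_S) = H^0(S, \Omega^1_S) = 0$ (irregularity zero) — wait, one must be careful: $H^2(S, T_S) \cong H^0(S, \Omega^1_S \otimes \omega_S)^\vee = H^0(S, \Omega^1_S)^\vee = 0$. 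Hence the long exact sequence yields
$$ 0 \to H^1(S, T_S(-\log\AR)) \to H^1(S,T_S) \to \bigoplus_i H^1(C_i, N'_{C_i}) \to H^2(S, T_S(-\log \AR)) \to 0, $$
so $H^2(S, T_S(-\log\AR))$ is the cokernel of the map $\C^{20} \to \C^r$ given (up to the identification $H^1(\P^1, \O(-2)) \cong \C$) by pairing a first-order deformation of $S$ with each curve class — i.e. the obstruction to keeping all $r$ curves algebraic. This map is, by the theory of $M$-polarized K3 surfaces (cf. Dolgachev, as cited), exactly the map whose kernel has dimension $20 - \rho_\AR$ where $\rho_\AR = \dim_\Q \langle \AR \rangle$ in $NS(S)_\Q$; dually its cokernel has dimension $r - \rho_\AR$. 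That gives $\dim H^2(X, T_X) = r - \dim_\Q\langle \AR\rangle$, and when $\AR$ is linearly independent this is $0$, i.e. no local-to-global obstructions.

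The main obstacle — and the step deserving the most care — is the first reduction: justifying rigorously that $H^2(X, T_X)$ (interpreted as the $\Q$-Gorenstein obstruction space, $H^2$ of the cotangent complex restricted to $\Q$-Gorenstein directions, or equivalently $\mathbb{T}^2_{\mathrm{QG}}$) equals $H^2$ of the log-tangent sheaf of the pair $(S, \AR)$, via the chain of birational modifications $\tilde X \to X$ and $\tilde X \to S$. One must check that blowing up a node (or infinitely near node) of $\AR$ and then contracting Wahl chains does not change this $H^2$: contracting a Wahl chain is accounted for by the vanishing of local $T^2_{\mathrm{QG}}$ for Wahl singularities (so $H^2$ is computed on $\tilde X$ with log poles along $\exc(\phi)$, using that Wahl chains are "negative definite log rational" so $H^1$ and $H^2$ of their local contributions behave as needed), while each blow-up $\pi$ at a point lying on the configuration is a log-crepant-type modification for the pair, under which $R^i\pi_* T_{\tilde X}(-\log \pi^*\AR)$ agrees with $T_S(-\log\AR)$ in the relevant degrees (the $(-1)$-curves introduced are part of the new boundary and contribute $N = \O(-1)$ with no $H^1$). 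I would handle this by citing the setup of \cite{U16a} and the explicit deformation theory there, and reduce all local checks to the two facts already quoted in the excerpt: Wahl singularities have one-dimensional smooth $\Q$-Gorenstein deformation space, and their resolution is a Wahl chain with known discrepancies. The global vanishing $H^2(S, T_S) = 0$ for a K3 surface is the other essential ingredient and is immediate.
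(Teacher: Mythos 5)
Your proposal is correct in outline and reaches the right answer, but the computational core is the Serre dual of what the paper does, so a comparison is in order. The paper first blows up the nodes lying on irreducible members of $\AR$, passes to the log \emph{cotangent} sheaf, and uses two exact sequences from \cite{EV92} together with Serre duality to identify $H^2(\tilde X, T_{\tilde X}(-\log \sum_j \AR_j))$ with $H^0(S',\Omega^1_{S'}(\log \AR''))$, i.e.\ with the \emph{kernel} of the first Chern class map $\bigoplus_\Gamma H^0(\Gamma,\O_\Gamma)\to H^1(S',\Omega^1_{S'})$. You instead stay with the log \emph{tangent} sheaf on $S$ and read off $H^2(S,T_S(-\log \AR))$ as the \emph{cokernel} of the semiregularity map $H^1(S,T_S)\to \bigoplus_i H^1(C_i,N_{C_i/S})$, using $H^2(S,T_S)=0$. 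Since the cup product $H^1(S,T_S)\times H^1(S,\Omega^1_S)\to H^2(S,\O_S)$ is a perfect pairing on a K3 surface, that map has rank equal to $\dim_\Q\langle\AR\rangle$, so the two computations agree; yours is arguably the more direct way to see where the N\'eron--Severi rank enters. Two points in your reduction need tightening. First, the isomorphism $H^2(X,T_X)\cong H^2(\tilde X,T_{\tilde X}(-\log \exc(\phi)))$ does not follow from the vanishing of the local $\Q$-Gorenstein $T^2$ of a Wahl singularity --- that is a statement about the global obstruction space $\mathbb{T}^2$, whereas the proposition is literally about the sheaf cohomology group $H^2(X,T_X)$; the correct mechanism is $\phi_*T_{\tilde X}(-\log \exc(\phi))=T_X$ together with $R^1\phi_*T_{\tilde X}(-\log \exc(\phi))=0$, which is exactly \cite[Theorem 2]{LP07}, the reference the paper uses (the blow-up invariance you assert for $\pi$ is also taken from there). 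Second, the proposition explicitly allows irreducible members of $\AR$ with nodes, and for such a component the residue sequence you write must be interpreted through its normalization; the paper sidesteps this by first blowing up all nodes of irreducible components so that every curve in the configuration becomes smooth, and you should either do the same or verify that the quotient sheaf on a nodal component is the normal sheaf pulled back to the normalization $\P^1$, which still contributes exactly one dimension per component to the count. With these two repairs your argument is complete and equivalent to the paper's.
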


\begin{proof}
Let $\AR =\{C_1, \ldots,C_r \}$. Let $\pi' \colon S' \to S$ be the blow-up of all nodes in the irreducible curves $C_i$, and define $\AR'=\{C'_1, \ldots,C'_r \}$ via their proper transforms. Let $E_1,\ldots, E_s$ be the exceptional $(-1)$-curves of $\pi'$, and let $\AR''=\AR' + \sum E_i$. By \cite[2.3(c)]{EV92}, we have the exact sequence $$0 \to \Omega_{S'}^1(\log \AR'')  \to  \Omega_{S'}^1 (\log \AR') \otimes \Big(\sum E_i\Big) \to \bigoplus_{E_i} \Omega_{E_i}^1 \otimes \AR''|_{E_i} \to 0.$$ Since $K_{S'} \sim \sum E_i$, by Serre's duality we have $$H^0(S',\Omega_{S'}^1(\log( \AR'')))=H^2(S',T_{S'}(-\log \AR'')),$$ because the degree of $\Omega_{E_i}^1 \otimes \AR''|{E_i}$ is $-1$ (here we are using that we resolve only nodes). We now consider the residue sequence \cite[2.3(a)]{EV92} $$0 \to \Omega_{S'}^1  \to  \Omega_{S'}^1(\log \AR'') \to \bigoplus_{\Gamma \in \AR''} \O_{\Gamma} \to 0.$$ As $H^0(S',\Omega_{S'}^1)=0$, we have the exact sequence of morphisms in cohomology $$ 0 \to H^0(S',\Omega_{S'}^1(\log \AR'')) \to \bigoplus_{\Gamma \in \AR''} H^0(\Gamma,\O_{\Gamma}) \to H^1(S',\Omega_{S'}^1),$$ where the last morphism is the first Chern class map, giving as images the classes of $\Gamma$ in the N\'eron-Severi group of $S'$. We now note that the dimension of $H^0(S',\Omega_{S'}^1(\log \AR''))$ is $r$ minus the dimension of the $\Q$-vector space generated by $\AR$ in the N\'eron-Severi group of $S$, as the exceptional curves $E_i$ are independent.

Let $\pi'' \colon \tilde{X} \to S'$ be the rest of the blow-ups to reach $\pi \colon \tilde{X} \to S$. We start with $H^2(S',T_{S'}(-\log \AR''))$. From here, we maintain the dimension if we  either erase at any step a $(-1)$-curve transversal to the rest of the configuration (e.g. any $E_i$), or we add $(-1)$-curves from blow-ups at the nodes of the configuration (cf. \cite{LP07}). Therefore, if $\AR_j$ are the Wahl chains in $\pi^*(\AR)$ to be contracted to $X$, then $H^2(S',T_{S'}(-\log \AR'')) \simeq H^2(\tilde X,T_{\tilde X}(-\log \sum \AR_j))$. On the other hand, by \cite[Theorem 2]{LP07}, we have $H^2(X,T_X) \simeq H^2(\tilde X,T_{\tilde X}(-\log \sum \AR_j))$, and so we have what is claimed. If $\AR$ is linearly independent, then $H^2(X,T_X)=0$ and every local deformation of its singularities may be globalized \cite[Prop. 4]{LP07}.
\end{proof}

\begin{remark}
One can easily obtain an analogue criteria to Proposition \ref{obstruction} for Enriques surfaces by using the following fact (see \cite[Section 3.1]{RU19}). Let $S_0$ be an Enriques surface, and let $f \colon S \to S_0$ be the \'etale double cover induced by $2K_{S_0} \sim 0$. Let $\AR_0$ be a SNC divisor on $S_0$, and $\AR=f^*(\AR_0)$. Then $$H^0(S,\Omega_{S}^1(\log \AR )) \simeq H^2(S_0,T_{S_0}(-\log \AR_0 )) \oplus H^0(S_0,\Omega_{S_0}^1(\log \AR_0 )).$$ Hence $\AR$ linearly independent implies no obstruction for any construction from $(S_0,\AR_0)$.
\end{remark}

We now show what we can realize from Theorem \ref{OneWahlK=1}. The cases (B2), (B3), (B4), (B5) correspond to nodal configurations of rational curves which can be realized, and moreover, it can be easily proved that they are linearly independent in the Ner\'on-Severi group. The case (B6) consist of two nodal $I_1$ fibers and one section. This can be realized of course, but it has a one dimensional obstruction space. We can construct (B1) just as we did for (B1) in \cite[Sect. 2.3]{DRU20}. We note that in the construction of (B1) in \cite{DRU20} we used flips, and so it is not direct, but it proves that the corresponding W-surface has no obstructions. Thus it is not necessary to start with nodal divisors to obtain unobstructed surfaces.

\begin{figure}[htbp]
\centering
\includegraphics[width=10cm]{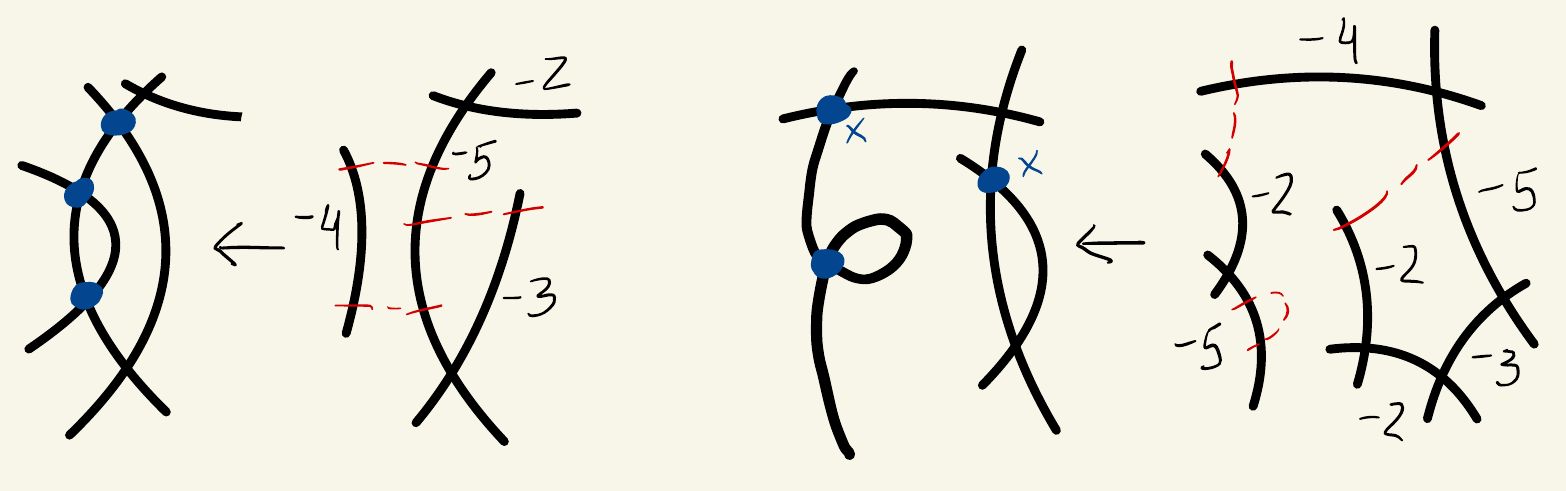}
\caption{Configurations for (A2) (left) and extra Wahl singularity (right)}
\label{f0}
\end{figure}

For the (An) cases, we are able to realize only (A2). We start with a configuration of four $(-2)$-curves as in Figure \ref{f0} (left), and then we blow-up as shown. The contraction of both Wahl chains has $K$ ample, $K^2=1$. This situation has no-local-to-global obstructions, and so it produces a surface with $K$ ample and one $\frac{1}{4}(1,1)$ singularity. Using flips as in \cite{U16b}, we can show that this is situation (A2). We do not know about existence for the cases (A1), (A3), and (A4). We finish with the example shown in Figure \ref{f0} (right), which produces a surface with Wahl singularities corresponding to $[2,5]$ and $[2,2,3,5,4]$, and $K$ is ample with $K^2=1$. It is of course obstructed, since it is using two fibers $I_1$ and $I_2$ of an elliptic fibration (and a $(-2)$-curve which is a section). Another way to see that it is obstructed: the Wahl chain $[2,2,3,5,4]$ is not part of Theorem \ref{OneWahlK=1} (see Remark \ref{boo}). 

%----------------------------------------------------------------
\section{Rational configurations in K3 surfaces} \label{s3}

In the construction of W-surfaces with ample canonical class, i.e. surfaces $X$ with only Wahl singularities and $K_X$ ample that have a $\Q$-Gorenstein smoothing, we typically start with a special configuration of rational curves $\AR$ in some minimal surface $S$. We then consider particular blow-ups $\pi \colon \tilde X \to S$, so that the pull-back of $\AR$ contains Wahl chains which are contracted to obtain a singular surface $X$. Let $\phi \colon \tilde X \to X$ be the contraction of the chains. When we see this backwards, what we really are thinking is that $\AR$ is the image of $\exc(\phi)$ by $\pi$. This is a delicate operation, since in addition we want $K_X$ ample, and the existence of $\Q$-Gorenstein smoothings. \textit{Which pairs $(S,\AR)$ exactly occur after fixing invariants?} This is the geographical question that motivates this section in the case of $p_g=1$, $q=0$ surfaces of general type, starting with a K3 surface $S$ and a simple normal crossings configuration of $(-2)$-curves $\AR$. This particular situation is enough for our purposes. In a sequel work we will consider more general arrangements.

Let $S$ be a K3 surface which contains a simple normal crossings configuration $\AR$ of $r$ $(-2)$-curves. Let $t_2$ be the number of nodes in $\AR$. Let $\Omega_S^1(\log \AR)$ be the logarithmic differentials on $S$ with simple poles at $\AR$. It is a rank $2$ locally free sheaf, whose Chern classes define the log Chern numbers $$\bar{c}_1^2:= \AR^2= 2t_2-2r \ \ \ \ \ \ \bar{c}_2:= \chi_{top}(S)- \chi_{top}(\AR)=24+t_2-2r.$$

\begin{proposition}
Let us assume that the process above works, i.e from a configuration $\AR$ of $r$ $(-2)$-curves we construct an $X$ with $K_X$ ample and $P$ Wahl singularities. Then
$$\bar{c}_1^2= 2 K_X^2 \ \ \ \ \ \ \bar{c}_2= 24 -P-K_X^2,$$ and $$K_X^2 \leq 14 - \frac{1}{5}(3P-2).$$ In this way, we have $r=P+2K_X^2$ and $t_2=3K_X^2+P$. The number of nodes in $\AR$ to be blown-up is $P+K_X^2$.
\label{chern}
\end{proposition}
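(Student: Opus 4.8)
The strategy is to transport every invariant to the minimal resolution $\tilde X$, which by hypothesis is a K3 surface $S$ blown up at $n$ points over nodes (and infinitely near nodes) of $\AR$, so $K_{\tilde X}^2=-n$ and $\chi_{\mathrm{top}}(\tilde X)=24+n$. Write $D=\exc(\phi)=\bigsqcup_{j=1}^{P}\AR_j$ for the disjoint Wahl chains, $\ell_j$ for the length of $\AR_j$, and $L=\sum_j\ell_j$. First I would record two preliminary facts. \emph{(a) Numerology of a Wahl chain}: if $[b_1,\dots,b_\ell]$ is a Wahl chain then $\sum_i b_i=3\ell+1$, so $\AR_j^2=-\ell_j-3$ and $K_{\tilde X}\cdot\AR_j=\sum_i(b_i-2)=\ell_j+1$; moreover, combining this with the Fibonacci description of the discrepancies, $\Delta_\phi^2=-L$, where $\Delta_\phi$ is the effective $\Q$-cycle supported on $D$ (coefficients in $(0,1)$) with $\phi^{*}K_X=K_{\tilde X}+\Delta_\phi$. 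Since $\Delta_\phi$ is $\phi$-exceptional, $K_X^2=(\phi^{*}K_X)^2=K_{\tilde X}^2-\Delta_\phi^2=-n+L$, hence $n=L-K_X^2$. \emph{(b) Invariance of log Chern numbers}: blowing up a node of an SNC configuration and adjoining the new exceptional curve leaves $\bar c_1^2$ and $\bar c_2$ unchanged; since every center of $\pi$ is a node of the configuration obtained so far, the SNC configuration $\AR'$ on $\tilde X$ given by the $r$ strict transforms of $\AR$ together with the $n$ exceptional curves of $\pi$ satisfies $\bar c_1^2(S,\AR)=\bar c_1^2(\tilde X,\AR')$ and $\bar c_2(S,\AR)=\bar c_2(\tilde X,\AR')$.

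The core of the proof is the description of $\AR'$. Since $\AR=\pi(\exc(\phi))$, every strict transform of a curve of $\AR$ is a component of $D$; thus $D\subseteq\AR'$, and $R:=\AR'\setminus D$ consists only of $\pi$-exceptional curves, with $|R|=r+n-L$ components. I would show that $D$ is exactly the union of the strict transforms of $\AR$ (so $L=r$), and that $R$ consists of pairwise disjoint $(-1)$-curves each of which meets $D$ transversally in exactly two points. For $C\in R$, ampleness of $K_X$ forces $0<\phi^{*}K_X\cdot C=K_{\tilde X}\cdot C+\Delta_\phi\cdot C$; a curve created by blowing up a node meets exactly two curves of the ambient configuration, and this is preserved under every later blow-up at a node, so $C$ meets $\AR'$ in precisely two points, and — the coefficients of $\Delta_\phi$ being $<1$ — both of them must lie on $D$, for otherwise $\phi^{*}K_X\cdot C\le 0$. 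The same analysis, applied to $\pi$-exceptional curves that were blown up again (such a curve would be a non-$(-1)$-curve touching another $\pi$-exceptional curve), shows that no infinitely near node was blown up, i.e. $L=r$, and it forces distinct components of $R$ to be disjoint.

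With this picture everything else is bookkeeping. From $\AR_j^2=-\ell_j-3$, $K_{\tilde X}\cdot\AR_j=\ell_j+1$ and $K_{\tilde X}^2=-n$ one computes $(K_{\tilde X}+D)^2=-n+L-P$ and $\chi_{\mathrm{top}}(\tilde X)-\chi_{\mathrm{top}}(D)=24+n-L-P$. A short computation (each $C\in R$ has $K_{\tilde X}\cdot C=-1$, $D\cdot C=2$, and the $C$'s are disjoint) gives $\chi_{\mathrm{top}}(\AR')=\chi_{\mathrm{top}}(D)$ and $(K_{\tilde X}+\AR')^2=(K_{\tilde X}+D)^2+|R|$, so, using $n=L-K_X^2$,
\[
\bar c_2(S,\AR)=24+n-L-P=24-K_X^2-P,\qquad \bar c_1^2(S,\AR)=(-n+L-P)+|R|=r-P.
\]
Combined with $\bar c_1^2(S,\AR)=2t_2-2r$ and $\bar c_2(S,\AR)=24+t_2-2r$, this is a linear system in $r,t_2$ whose solution is $r=P+2K_X^2$, $t_2=3K_X^2+P$; therefore $\bar c_1^2=r-P=2K_X^2$, $\bar c_2=24-P-K_X^2$, and the number of nodes blown up is $n=L-K_X^2=r-K_X^2=P+K_X^2$. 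Finally, plugging these into the logarithmic Bogomolov–Miyaoka–Yau inequality $\bar c_1^2\le 3\bar c_2$ — valid because $(S,\AR)$ is log-minimal ($S$ is a K3, hence carries no $(-1)$-curve) and of log-general type ($K_S+\AR=\AR$ is big, since $\AR^2=2K_X^2>0$) — yields $2K_X^2\le 3(24-P-K_X^2)$, i.e. $K_X^2\le 14-\tfrac15(3P-2)$.

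I expect the real obstacle to be the second paragraph. Ampleness of $K_X$ directly gives only $D\cdot C\ge 2$ for the leftover exceptional curves, so upgrading to $D\cdot C=2$, proving disjointness, and — above all — deducing $L=r$ genuinely require the combinatorics of iterated blow-ups at nodes of an SNC configuration together with the hypothesis $\AR=\pi(\exc(\phi))$. One should also make sure that the log Chern number invariance of (b) is invoked only for blow-downs of $(-1)$-curves meeting the rest of the configuration in at most two points, which is precisely what ``blow-ups over nodes and infinitely near nodes of $\AR$'' guarantees. The remaining ingredients — the Wahl-chain numerology, the identity $\Delta_\phi^2=-L$, the $2\times2$ linear algebra, and the appeal to log-BMY — are routine.
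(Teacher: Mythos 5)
Your overall strategy is sound and genuinely different from the paper's, but the step you yourself identify as the core of the argument contains a false claim. You assert that $D=\exc(\phi)$ consists exactly of the strict transforms of the curves of $\AR$, that no infinitely near node is blown up, and hence that $L=r$. This is not true, and it is contradicted by every example in Section \ref{s4}: for $K^2=2$ the configuration has $r=6$ curves but the Wahl chains are $[4]$ and $[4,2,3,5,4,2,2]$, so $L=8$; the figures explicitly mark points where several successive blow-ups occur. A $\pi$-exceptional curve that is blown up again does not lead to a contradiction with ampleness — it simply acquires self-intersection $\le -2$ and becomes a component of a Wahl chain, i.e.\ it joins $D$. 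This is precisely the mechanism by which long Wahl chains are manufactured from a handful of $(-2)$-curves, so any argument ruling it out must be wrong. Concretely, this error invalidates your derivation of the last assertion: the total number of blow-ups is $n=L-K_X^2$, which is \emph{not} $P+K_X^2$ in general ($n=6$ versus $P+K_X^2=4$ in the $K^2=2$ example); the quantity that equals $P+K_X^2$ is $|R|=r+n-L$, the number of residual $(-1)$-curves, which (one $(-1)$-curve surviving over each blown-up point) counts the distinct nodes of $\AR$ that get blown up.

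Fortunately, the rest of your computation never actually uses $L=r$: you only need $n=L-K_X^2$, $|R|=r+n-L$, and the description of $R$ as disjoint $(-1)$-curves meeting $D$ transversally in exactly two points (which does follow from ampleness and the coefficients of $\Delta_\phi$ lying in $(0,1)$, as in Proposition \ref{nef}) — and $L$ cancels, giving $\bar c_1^2=r-P$ and $\bar c_2=24-K_X^2-P$, from which the linear system yields the stated formulas. So if you delete the claim $L=r$, keep the correct statement that every component of $\AR'$ not in $D$ is a $(-1)$-curve of the stated type, and recount the blown-up nodes via $|R|$, your proof goes through. It is worth comparing with the paper's route, which avoids describing $\AR'$ altogether: one introduces the two quantities $P(\AR)=\sum C_i^2+5r-2t_2$ and $K(\AR)=K_S^2+2r-t_2-P(\AR)$, checks that they are unchanged by blowing up a node (adjoining the exceptional curve) and by deleting a $(-1)$-curve meeting the rest of the configuration in two points, and evaluates them at the two ends of the process: they equal $3r-2t_2$ and $t_2-r$ on $(S,\AR)$, and $P$ and $K_X^2$ on $(\tilde X,\bigsqcup\AR_j)$ by the identity $\sum b_i=3\ell+1$. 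This bookkeeping is insensitive to which exceptional curves end up inside the chains, which is exactly the point where your version stumbles; your version, in exchange, makes the geometry of $\AR'$ explicit and gives the node count for free once $|R|$ is identified.
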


\begin{proof}
Let us first consider an arbitrary SNC configuration of smooth rational curves $\AR=\{C_1,\ldots,C_r \}$ in a surface $S$ with $t_2$ nodes as singularities. Let us define $$P(\AR):= \sum_{i=1}^r C_i^2 +5r-2t_2 \ \ \ \text{and} \ \ \ K(\AR):= K_S^2 +2r-t_2-P(\AR).$$ Let $\sigma \colon S' \to S$ be the blow-up at a node of $\AR$, let $E$ be the exceptional $(-1)$-curve, and let $\AR' \subset S'$ be the proper transform of $\AR$. Then an easy computation shows that $P(\AR'+E)=P(\AR)$ and $K(\AR'+E)=K(\AR)$. At the same time, if $E$ is a $(-1)$-curve in $\AR$ which intersects $\AR - E$ at only two points, then $P(\AR - E)=P(\AR)$ and $K(\AR - E)=K(\AR)$. 

On the other hand, if $\{A_1,\ldots,A_s\}$ is a Wahl chain, then $$\sum_{i=1}^s A_i^2 =-3s-1.$$ Hence if $\AR$ is the disjoint union of $P$ Wahl chains of lengths $\ell_i$ in a surface $\tilde{X}$ and $X$ is the contraction of the chains, then $P(\AR)= P$ and $K(\AR)= K_{\tilde{X}}^2 + \sum_{i=1}^P \ell_i=K_X^2$.

Now let $\AR \subset S$ be the SNC configuration of $(-2)$-curves in the K3 surface $S$. In the process of constructing the Wahl chains in $\tilde{X}$ through the composition of blow-ups $\pi \colon \tilde{X} \to S$ we can only blow-up at nodes and subsequent nodes of proper transforms of $\AR$, since otherwise we would violate the ampleness of $K_X$. This is because, by blowing up away from the nodes we would obtain a $(-1)$-curve that intersects transversally the Wahl chains in at most one point, and so the image of that curve in $X$ would be negative for $K_X$. As $P(\AR)$ and $K(\AR)$ remain invariants by adding or subtracting exceptional $(-1)$-curves, we obtain that
$P(\AR)= P$ and $K(\AR)=K_X^2$. 

But we know that $\bar{c}_1^2= 2t_2-2r$, and $\bar{c}_2=24+t_2-2r$, and so we obtain $\bar{c}_1^2= 2 K_X^2$ and $\bar{c}_2= 24 -P-K_X^2$. 

To show the inequality $K_X^2 \leq 14 - \frac{1}{5}(3P-2)$ we directly apply \cite[Theorem 0.1]{L03} to the log canonical pair $(S,\AR)$, since $K_S + \AR = \AR$ is effective.
\end{proof}

We briefly remark that typically the divisor $K_S+\AR=\AR$ is nef, big and ample modulo curves in $\AR$. So, in that case we can say by \cite[Thm. 3.1]{TY87} that $K_X^2 = 14 - \frac{1}{5}(3P-2)$ if and only if $S \setminus \AR$ is a ball quotient, and this is not possible as $\AR$ would need to be a disjoint union of exceptional divisors of quotients of elliptic singularities. 

The reader may be wondering why do we not just consider the situation $(\tilde{X},\exc(\phi))$ to compute the log Chern numbers and the corresponding inequality. The reason is that we obtain weaker invariants. In that case, $\bar c_1^2= K_X^2 - P$, $\bar c_2= 24-K_X^2-P$, and so $K_X^2 \leq 18 - \frac{1}{2} P$ by the Langer-BMY inequality \cite[Theorem 0.1]{L03}.

\begin{remark}
We point out that the analogue situation to Proposition \ref{chern} works on an Enriques surface. In that case $\bar{c}_1^2= 2 K_X^2$, $\bar{c}_2= 12 -P-K_X^2$, and so $$K_X^2 \leq 7 - \frac{1}{5}(3P-1).$$ Therefore, with one or two Wahl singularities we may get $K_X^2=6$, but for more the maximum possible $K_X^2$ is $5$. In the case of $P=2$, we can disregard $K_X^2=6$ through \cite[Thm. 3.1]{TY87} if $\AR$ is nef, big and ample modulo curves in $\AR$.
\end{remark}

\begin{proposition}
Let $S$ be a K3 surface, and let $\AR$ be a SNC configuration of $(-2)$-curves. Assume there is a composition of blow-ups $\pi \colon \tilde X \to S$ so that the pull-back of $\AR$ contains disjoint Wahl Chains. Let $\phi \colon \tilde X \to X$ be the contraction of them. Let $\exc(\phi)=\sum_{i} C_i$ the decomposition as sum of $\P^1$ (i.e. the sum of the curves in the Wahl chains), and let $d_i$ be the discrepancy of $C_i$. Assume $\AR=\pi(\exc(\phi))$. Then, we have

\begin{itemize}
\item[(n)] $K_X$ is nef if and only if for any $(-1)$-curve $\Gamma$ in $\tilde{X}$ -- thus contained in $\pi^*(\AR)$ -- we have $-1 \geq d_i+d_j$, where $\Gamma \cdot C_i=\Gamma \cdot C_j=1$.

\item[(a)] $K_X$ is ample if and only if $K_X^2>0$, for any $(-1)$-curve in $\tilde{X}$ we have $-1 > d_i+d_j$, where $\Gamma \cdot C_i=\Gamma \cdot C_j=1$, and there are no $(-2)$-curves disjoint from $\pi^*(\AR)$.
\end{itemize}

Moreover, if we satisfy case \textnormal{(n)}, $K_X^2>0$, and $X$ has $\Q$-Gorenstein smoothings, then we can find the canonical model of $X$ by contracting all $(-1)$-curves in $\tilde{X}$ with $-1 = d_i+d_j$, and all $(-2)$-curves disjoint from $\pi^*(\AR)$. The resulting surface has only T-singularities.
\label{nef}
\end{proposition}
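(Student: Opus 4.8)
The plan is to compute intersection numbers against $K_X$ via the pullback formula on $\tilde X$, reducing ampleness/nefness of $K_X$ on $X$ to the behaviour of $K_{\tilde X}$ plus the discrepancy contributions on the curves that survive in $X$. First I would write, as in the proof of Proposition \ref{q=0pg=1}, the relation
$$\phi^*(K_X) \equiv K_{\tilde X} - \sum_i d_i C_i,$$
where $d_i \in (-1,0)$ are the discrepancies of the exceptional curves $C_i$ of $\phi$ (note $-d_i$ is what was called $\alpha_j$ earlier). Since $\AR = \pi(\exc(\phi))$ and blow-ups are only performed over nodes and infinitely near nodes of $\AR$, the surface $\tilde X$ is obtained from the K3 surface $S$ by blow-ups concentrated on the configuration; in particular every $(-1)$-curve $\Gamma$ of $\tilde X$ is contained in $\pi^*(\AR)$, and $K_{\tilde X} \equiv \pi^*K_S + \sum E = \sum E$ is effective and supported on the exceptional locus of $\pi$. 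The curves of $\tilde X$ that get contracted by $\phi$ are the $C_i$; the curves that are contracted to a point by $\phi$ but are \emph{not} among the $C_i$ are precisely the $(-1)$-curves $\Gamma$ that meet the Wahl chains transversally in the pattern $\Gamma\cdot C_i = \Gamma\cdot C_j = 1$ (the nodes being resolved), together with possible $(-2)$-curves disjoint from $\pi^*(\AR)$.

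Next I would test $K_X$ against curves. By the projection formula, for an irreducible curve $D$ on $X$ with strict transform $\tilde D$ on $\tilde X$ we have $K_X\cdot D = \phi^*(K_X)\cdot \tilde D = (K_{\tilde X} - \sum d_i C_i)\cdot \tilde D$. The only curves that can threaten nefness are the images of $(-1)$-curves $\Gamma\subset\tilde X$ that are not in a Wahl chain: for such a $\Gamma$ meeting exactly $C_i$ and $C_j$ (each once), one computes
$$\phi^*(K_X)\cdot \Gamma = K_{\tilde X}\cdot\Gamma - d_i - d_j = -1 - d_i - d_j,$$
using $K_{\tilde X}\cdot\Gamma = -1$ by adjunction. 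For a $(-2)$-curve $\Gamma'$ disjoint from $\pi^*(\AR)$ one gets $\phi^*(K_X)\cdot\Gamma' = K_{\tilde X}\cdot\Gamma' = 0$, so such curves are $K_X$-trivial but obstruct ampleness. Curves whose strict transform meets the $C_i$ ``positively'' (entering a chain from outside, or higher-multiplicity tangencies) only make the intersection larger, so they are harmless; this needs a short bookkeeping argument using $0 < -d_i < 1$. Thus $K_X\cdot D \ge 0$ for all $D$ iff $-1-d_i-d_j\ge 0$, i.e. $-1 \ge d_i+d_j$, for every such $\Gamma$, which is (n); and $K_X\cdot D > 0$ for all $D$ together with $K_X^2>0$ gives ampleness by the Nakai--Moishezon criterion, which is (a) once one also forbids the $K_X$-trivial $(-2)$-curves disjoint from $\pi^*(\AR)$.

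For the final assertion: assuming (n), $K_X^2>0$, and that $X$ admits a $\Q$-Gorenstein smoothing, $(X\subset\X)\to(0\in\D)$ is a W-surface with $K_X$ nef and big, so by the discussion after Definition \ref{wsurf} (cf. \cite[Sect. 2]{U16a}) one may run the relative MMP / pass to the relative canonical model; concretely, the canonical model of $X$ is obtained by contracting exactly the curves $D\subset X$ with $K_X\cdot D = 0$, which by the computation above are the images of the $(-1)$-curves $\Gamma$ with $-1 = d_i+d_j$ and the $(-2)$-curves disjoint from $\pi^*(\AR)$. That these contractions exist and produce a normal surface with only T-singularities (RDP or $\frac{1}{dn^2}(1,dna-1)$) is exactly the statement that the relative canonical model of a W-surface has only T-singularities, again from \cite[Sect. 2]{U16a}, \cite[Sect. 2--3]{U16b}; one checks that the chains $C_i$ together with the contracted $(-1)$- and $(-2)$-curves form the resolution graphs of T-singularities, which follows because adjoining a $(-1)$-curve with $-1 = d_i+d_j$ to a union of Wahl chains produces a T-chain (this is the ``$\delta=1$'' extension of a Wahl chain), and RDPs absorb the disjoint $(-2)$-curves.

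The main obstacle I expect is the careful case analysis in the forward direction of (a)/(n): one must verify that \emph{no other} curve of $\tilde X$ can give a negative (resp. non-positive) intersection with $\phi^*(K_X)$ — in particular curves meeting several chains or meeting a chain at an interior curve — and that the listed $(-1)$- and $(-2)$-curves are the only $K_X$-trivial ones. This is where the constraint that all blow-ups occur over nodes of $\AR$, so that $K_{\tilde X}$ is an effective sum of the $E$'s supported on the configuration, does the real work; the discrepancy arithmetic ($d_i\in(-1,0)$, and the Fibonacci/continued-fraction structure of Wahl chains) is then routine.
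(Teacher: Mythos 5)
Your proposal is correct and follows essentially the same route as the paper: write $\phi^*(K_X) \equiv K_{\tilde X} - \sum_i d_i C_i$ as an effective $\Q$-divisor supported on $\exc(\pi)\cup\exc(\phi)$, observe that the only components of that support which are not contracted by $\phi$ are the $(-1)$-curves (forcing, in the nef case, all blow-ups to occur at nodes and infinitely near nodes of $\AR$), compute $\phi^*(K_X)\cdot\Gamma=-1-d_i-d_j$, apply Nakai--Moishezon for (a), and cite \cite[Prop.~2.7]{U16a} for the canonical model having only T-singularities. The ``short bookkeeping argument'' you defer is exactly the paper's one-line observation that an effective divisor is automatically non-negative against any irreducible curve not in its support, so nefness need only be tested on the $E_i$ that are $(-1)$-curves.
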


\begin{proof}
Let $K_{\tilde X} \sim \sum_i \alpha_i E_i$ where $E_i$ are the proper transforms of each of the blow-ups from $\pi$, and $\alpha_i \in \Z_{>0}$. We know that numerically $K_{\tilde X} \equiv \phi^*(K_X) + \sum_i d_i C_i$. We recall that $-1<d_i<0$. Therefore $\phi^*(K_X)$ is a $\Q$-effective divisor, supported on the $C_i$ and the $E_j$ (which of course may coincide). Hence a necessary and sufficient condition for $K_X$ to be nef is that $\phi^*(K_X) \cdot E_i \geq 0$ for all $i$.

We claim first that $\pi$ can only blow-up at nodes of $\AR$ and infinitely near nodes, since otherwise there would be a $(-1)$-curve intersecting $\exc(\phi)$ transversally in at most one point, so $K_X$ would not be nef. For the same reason, the only curves $E_i$ not equal to some $C_j$ are the $(-1)$-curves. Therefore, we only need to verify $\phi^*(K_X) \cdot E_i \geq 0$ at the $E_i$ which are $(-1)$-curves, and this is $-1 \geq d_i+d_j$. We do not have more than two intersections because $\AR$ is a SNC divisor.

For ampleness we use the Nakai-Moishezon criterion, which requires $-1 > d_i+d_j$ for all $(-1)$-curves, and any zero curve for $\phi^*(K_X)$ (not a $C_j$) to be disjoint from $\sum_i E_i + \sum_j C_j$. Let $X \to X_{can}$ be the canonical model of $X$ given by $|mK_{X}|$ for $m \gg 0$. Then we have that any zero curve is a $(-2)$-curve, as they do not pass through the singularities of $X$. 

In case of (n), $K_X^2>0$, and $\Q$-Gorenstein smoothing, we have that the canonical model of the $\Q$-Gorenstein smoothing has as special fiber the canonical model of $X$ with only T-singularities, as shown in \cite[Prop. 2.7]{U16a}.
\end{proof}

\begin{corollary}
Let us assume the same hypothesis in Proposition \ref{nef}, $K_X$ nef, and that the intersection matrix of $\AR$ is invertible. Then $X$ has no-local-to-global obstructions, and so there are $\Q$-Gorenstein smoothings for $X$. Moreover, if $K_X^2>0$, then the canonical model $X_{can}$ is a point in the KSBA compactification of the moduli space of surfaces of general type with $p_g=1$, $q=0$, and $K^2=K_X^2$, which belongs to a $(20-2K_X^2)$-dimensional irreducible family. Each Wahl singularity in $X_{can}$ produces an irreducible divisor which parametrizes KSBA stable surfaces with that Wahl singularity.
\label{KSBA}
\end{corollary}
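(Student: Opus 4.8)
The plan is to combine Proposition~\ref{obstruction} with the unobstructedness results of \cite{LP07} and the standard Euler--characteristic count for $\Q$-Gorenstein deformations. First, an invertible intersection matrix forces $\AR$ to be linearly independent in the N\'eron--Severi group of $S$: a relation $\sum_i a_iC_i\equiv 0$ gives $\sum_i a_i(C_i\cdot C_j)=0$ for every $j$, so $(a_i)$ lies in the kernel of the intersection matrix and hence vanishes. By Proposition~\ref{obstruction} this yields $H^2(X,T_X)=0$, so $X$ has no local-to-global obstructions; since local $\Q$-Gorenstein deformations of T-singularities are unobstructed while Wahl singularities admit $\Q$-Gorenstein smoothings, \cite[Prop.~4]{LP07} then produces a $\Q$-Gorenstein smoothing of $X$ and shows that the base of its $\Q$-Gorenstein deformation functor is smooth. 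This gives the first assertion.

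Now assume $K_X^2>0$, so that $K_X$ is nef and big. By the last part of Proposition~\ref{nef} the canonical model $X_{can}$ is obtained from $\tilde X$ by contracting the $(-1)$-curves with $d_i+d_j=-1$ together with the $(-2)$-curves disjoint from $\pi^*(\AR)$; it has ample $K_{X_{can}}$ and only T-singularities, so $[X_{can}]$ is a point of the KSBA compactification. The contraction $X\to X_{can}$ only collapses curves $C$ with $K_X\cdot C=0$, which form negative-definite ADE configurations, hence it is crepant, $K_{X_{can}}^2=K_X^2$, and a standard Leray computation for this contraction gives $H^2(X_{can},T_{X_{can}})=H^2(X,T_X)=0$; thus $X_{can}$ is again unobstructed. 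Taking canonical models fibrewise in the $\Q$-Gorenstein smoothing of $X$ (as in \cite[Prop.~2.7]{U16a}) shows that $X_{can}$ $\Q$-Gorenstein deforms to surfaces of general type with $q=0$, $p_g=1$ and $K^2=K_X^2$ (the W-surface invariants being constant), and $\chi(\O_{X_{can}})=2$ since $X_{can}$ has rational singularities. As the expected dimension of $\Q$-Gorenstein deformations of a surface with T-singularities equals $10\chi(\O)-2K^2$, exactly as in the smooth case (see \cite{LP07,PPS13}), unobstructedness makes $[X_{can}]$ a point of a unique irreducible component of the KSBA compactification, of dimension $10\cdot 2-2K_X^2=20-2K_X^2$, which is the second assertion.

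Finally, fix a Wahl singularity $p$ of $X_{can}$. Since $H^2(X_{can},T_{X_{can}})=0$, the natural morphism from the smooth germ $\mathrm{Def}^{\mathrm{QG}}(X_{can})$ to the product $\prod_q \mathrm{Def}^{\mathrm{QG}}(q)$ over the T-singularities $q$ of $X_{can}$ is smooth, because the globalization statement in \cite[Prop.~4]{LP07} makes it surjective on tangent spaces. As $\mathrm{Def}^{\mathrm{QG}}(p)\cong\mathbb{A}^1$ with equisingular locus the origin, the preimage $D_p$ is a smooth divisor in $\mathrm{Def}^{\mathrm{QG}}(X_{can})$ cut out by a single coordinate, hence irreducible, and a general point of $D_p$ has all the other deformation parameters generic, so it parametrizes a stable surface still carrying the singularity $p$ with the remaining singularities smoothed. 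Passing to the moduli space, which is legitimate because $\mathrm{Aut}(X_{can})$ is finite, the closure of the image of $D_p$ is the asserted irreducible divisor. I expect the main obstacle to be the middle step: matching the dimension of the $\Q$-Gorenstein deformation space of the singular model $X_{can}$ with the smooth invariant $10\chi(\O)-2K^2$ (i.e. checking that T-singularities are neutral for this count), and verifying carefully that each $D_p$ is genuinely irreducible and that its general member has exactly the singularity $p$ rather than a degeneration of it.
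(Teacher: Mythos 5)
Your first step (invertibility of the intersection matrix $\Rightarrow$ linear independence in the N\'eron--Severi group $\Rightarrow$ Proposition~\ref{obstruction} applies, giving $H^2(X,T_X)=0$ and hence unobstructed $\Q$-Gorenstein smoothings) is exactly the paper's argument, and your treatment of the divisors at the end is a reasonable expansion of what the paper delegates to \cite{H12}. The problem is the middle step, and it is a genuine gap, not just a technicality. You assert that $X\to X_{can}$ ``only collapses curves $C$ with $K_X\cdot C=0$, which form negative-definite ADE configurations, hence it is crepant \dots and a standard Leray computation gives $H^2(X_{can},T_{X_{can}})=H^2(X,T_X)$.'' But the $K_X$-trivial curves being contracted include the images of the $(-1)$-curves with $d_i+d_j=-1$, which pass through the Wahl singularities of $X$; they are not ADE configurations, and Proposition~\ref{nef} says explicitly that their contraction produces non-Du Val T-singularities $\frac{1}{dn^2}(1,dna-1)$ with $d\geq 2$. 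Consequently there is no ``standard Leray computation'' relating $H^2(X,T_X)$ to $H^2(X_{can},T_{X_{can}})$: the correct comparison is via \cite[Theorem 2]{LP07} on the common resolution $\tilde X$, where the log configuration for $X_{can}$ is strictly larger than the one for $X$ (it includes the connecting $(-1)$-curves and the disjoint ADE $(-2)$-curves), so the equality of the two $H^2$'s is something to be proved, not formal.

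The paper closes this gap differently: it re-runs the proof of Proposition~\ref{obstruction} from scratch for $X_{can}$, starting from the enlarged configuration $\AR$ union the disjoint ADE configurations of $(-2)$-curves (still SNC with invertible, hence nondegenerate, intersection matrix since the ADE blocks are negative definite and disjoint from $\AR$), and blowing up to produce the full T-chains rather than only the Wahl chains. This yields $H^2(X_{can},T_{X_{can}})=0$ directly. If you want to keep your structure, you must replace the Leray claim by this re-application of Proposition~\ref{obstruction} (or by the LP07-style lemma that adding a $(-1)$-curve through a node of the configuration, and adding NS-independent disjoint curves, does not change the relevant $H^2$). Your dimension count $10\chi-2K^2=20-2K_X^2$ and the construction of the boundary divisors are fine once unobstructedness of $X_{can}$ is actually established.
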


\begin{proof}
We can apply Proposition \ref{obstruction} as $K_X$ is nef (so we blow-up only at nodes and infinitely near nodes of the SNC divisor $\AR$), and the intersection matrix of $\AR$ is invertible. So $X$ has no-local-to-global obstructions to deform. If $K_X^2>0$, then, according to Proposition \ref{nef}, to obtain the canonical model $X_{can}$ we need to contract a bunch of $(-1)$-curves attached to two Wahl chains, and contract ADE configurations of $(-2)$-curves disjoint from $\AR$. The contraction of the $(-1)$-curves would produce non-ADE T-singularities, and the contraction of ADE chains produce RDP singularities. So instead we can start with $\AR$ union the ADE disjoint configurations, which would also give an invertible intersection matrix, and blow-up to obtain the T-chains, to use the same proof of Proposition \ref{obstruction} to prove that $X_{can}$ has no-local-to-global obstructions to deform. The invariants of the smoothings are clear. For the final statement we refer to \cite{H12}.
\end{proof}

In particular, to find surfaces $X$ as in Corollary \ref{KSBA} for a fixed $K^2$, we have the restriction $1 \leq K_X^2 \leq 9$. In addition, the K3 surface must have Picard number at least $r=P+2 K_X^2 \geq 3$, where $P$ is the number of Wahl singularities in $X$. Thus for higher $K^2$ there should be few Wahl singularities, and we would need a K3 surface with high Picard number and a configuration $\AR$ with high log Chern slope. That poses a difficulty to find $X$ with high $K^2$, since high log Chern slopes are harder to achieve, and high Picard number decreases the dimension of the loci of the corresponding K3 surfaces. 

\begin{remark}
In fact, say that the surface $X$ in Corollary \ref{KSBA} has $K_X$ ample, and only one Wahl singularity. The configuration $\AR$ of $r$ $(-2)$-curves defines a sub-lattice $M$ in Pic$(S)$, which is even, nondegenerate, and of signature $(1,r-1)$. Then, by \cite[Proposition 2.1]{D96}, it defines a local moduli space $S_M$ of isomorphism classes of M-polarized K3 surfaces of dimension $20-r$. By Proposition \ref{chern} we have that $r=1+2K_X^2$, and so the dimension of $S_M$ is $20-(1+2K_X^2)=19-2K_X^2$, which is the dimension of the divisor corresponding to $X$ in the KSBA compactification of the moduli of surfaces of general type with $p_g=1$, $q=0$, and $K^2=K_X^2$. There are many such divisors.
\label{igor}
\end{remark}

We finish this section with the recipe we will use to compute the fundamental group of a nonsingular fiber of a W-surface $X$.

\begin{proposition}
Let $X$ be a W-surface. Let $\phi \colon \tilde X \to X$ be the minimal resolution of $X$, and so $\exc(\phi)$ is a disjoint union of Wahl chains $\mathcal{C}_{n,a}$, where the associated Wahl singularity is $\frac{1}{n^2}(1,na-1)$. Assume that for each $\mathcal{C}_{n,a}$ there is either 
\begin{itemize}
\item[(I)] a $\mathcal{C}'_{n',a'}$ with $\gcd(n,n')=1$ and a $C \simeq \P^1$ such that $C$ intersects only $\mathcal{C}_{n,a}$ and $\mathcal{C}'_{n',a'}$ transversally at one point in one of their ends, or 

\item[(II)] a $C \simeq \P^1$ such that $C$ intersects only $\mathcal{C}_{n,a}$ transversally at one point in one of its ends.
\end{itemize}

Then $\pi_1(\tilde X) \simeq \pi_1(X_t)$ for all $t \in \D$. In this way, after running MMP on the $3$-fold family relative to $\D$, and after taking the canonical model (if possible), the fundamental group of the general fiber of the resulting $\Q$-Gorenstein deformation is isomorphic to $\pi_1(\tilde X)$.
\label{pi1}
\end{proposition}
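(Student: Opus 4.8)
The plan is to reduce the computation of $\pi_1(X_t)$ to a van Kampen-type argument on $\tilde X$, using the Milnor fiber description of a $\Q$-Gorenstein smoothing of a Wahl singularity. Recall that the Milnor fiber of the smoothing of $\frac{1}{n^2}(1,na-1)$ has fundamental group $\Z/n$, and its boundary (the link of the singularity) is the lens space $L(n^2,na-1)$, whose $\pi_1$ is $\Z/n^2$; the inclusion of the link into the Milnor fiber induces the surjection $\Z/n^2 \twoheadrightarrow \Z/n$. On the resolution side, a regular neighborhood of the Wahl chain $\mathcal C_{n,a}$ also has boundary $L(n^2,na-1)$, and the general fiber $X_t$ is obtained (topologically) from $\tilde X$ by removing, for each chain, a neighborhood of $\mathcal C_{n,a}$ and gluing in the corresponding Milnor fiber along this common lens-space boundary. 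So I would first set up this decomposition $X_t = (\tilde X \setminus \bigsqcup \nu(\mathcal C_{n,a})) \cup \bigsqcup (\text{Milnor fibers})$ and prepare to feed it into van Kampen.

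Next I would carry out the van Kampen computation. Write $U = \tilde X \setminus \bigsqcup \nu(\mathcal C_{n,a})$. Then $\pi_1(X_t)$ is the pushout of $\pi_1(U)$ with the Milnor-fiber groups $\Z/n$ amalgamated over the link groups $\Z/n^2$. The key point is that the loop generating $\pi_1(L(n^2,na-1)) = \Z/n^2$ already dies — or is controlled — inside $U$ by the extra curves provided by hypotheses (I) and (II). Concretely: in case (II), the extra $C \simeq \P^1$ meets one end $E$ of the Wahl chain transversally at one point; since $C$ is a rational curve meeting the chain only there, the meridian of that end-curve $E$, pushed into $U$, bounds a disk coming from $C$ (minus the intersection point), hence is already trivial in $\pi_1(U)$ — here one uses that the meridian of the end curve of the Wahl chain normally generates the link group $\Z/n^2$ (standard for a chain of rational curves with one distinguished end), so the whole $\Z/n^2$, and a fortiori its image $\Z/n$ in the Milnor fiber, becomes trivial. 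In case (I), the curve $C$ joins an end of $\mathcal C_{n,a}$ to an end of $\mathcal C'_{n',a'}$, so via $C$ the generator of $\Z/n$ (coming from the Milnor fiber of the first chain) is identified with a power of the generator of $\Z/n'$ (coming from the second); combined with the analogous identification going the other way and the relations in the two Milnor fibers, the condition $\gcd(n,n')=1$ forces both generators to vanish — this is exactly the classical argument that a "telescope" of chains with coprime indices kills the corresponding cyclic factors (cf. the computations in \cite{LP07, PPS09a, PPS13}). Doing this for every chain shows that all the amalgamated cyclic groups inject trivially, so the pushout collapses to $\pi_1(U)$, and then a final van Kampen step re-gluing the neighborhoods $\nu(\mathcal C_{n,a})$ (which are simply connected up to the meridian we have just killed) recovers $\pi_1(U) \cong \pi_1(\tilde X)$. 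Assembling these gives $\pi_1(\tilde X) \cong \pi_1(X_t)$ for all $t$, and constancy over $\D$ follows since $\X \setminus X$ is a fiber bundle over $\D^*$.

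For the last sentence: running the relative MMP on $\X/\D$ consists of flips and divisorial contractions, each of which (by \cite{HTU13}, \cite{U16a}) alters the central fiber by operations — contracting $(-1)$-curves, flipping Wahl chains, contracting $(-2)$-configurations — that do not change the diffeomorphism type, hence not the fundamental group, of the nearby smooth fiber; passing to the canonical model only contracts $(-2)$-curves and introduces RDPs, again not affecting $\pi_1$ of the general fiber. So the general fiber of the resulting $\Q$-Gorenstein deformation still has fundamental group $\pi_1(\tilde X)$. The main obstacle I anticipate is making the case (I)/(II) "killing" argument fully rigorous: one must pin down precisely which element of the link group $\Z/n^2$ the meridian at the chosen end represents, check it normally generates, and verify that the disk (resp. annulus) supplied by the auxiliary $\P^1$ really meets $U$ in a way that gives the claimed relation without introducing unwanted generators — this is where the SNC hypothesis and "at one point in one of their ends" are essential and must be used carefully. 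The rest is bookkeeping with van Kampen and the stability of $\pi_1$ under the MMP operations.
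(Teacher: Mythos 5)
Your proposal is correct and follows essentially the same route as the paper, which simply cites the surgery-plus-van-Kampen argument of Lee--Park \cite[Theorem 3]{LP07} for the first claim and the rationality of T-singularities (isomorphisms, $(-1)$-curve contractions, ADE contractions on the general fiber) for the MMP statement. You have merely spelled out the details the paper delegates to that reference — the lens-space boundary identification, the $\Z/n^2\twoheadrightarrow\Z/n$ map, the meridian-killing via the auxiliary $\P^1$'s, and the coprimality trick in case (I) — all of which match the intended argument.
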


\begin{proof}
The computation of $\pi_1(X_t)$ can be done as in \cite[Theorem 3]{LP07}, it uses the surgery involved in a $\Q$-Gorenstein smoothing and Van-Kampen Theorem. The last statement follows from the fact that T-singularities are rational, and on the general fiber in the MMP process or the canonical model (if possible) we have either isomorphisms, blow-down of $(-1)$-curves, or contraction of ADE configurations, all of which preserve the isomorphism class of $\pi_1$.
\end{proof}

%----------------------------------------------------------------
\section{Simply-connected $p_g=1$ surfaces for $2 \leq K^2 \leq 9$} \label{s4}

By definition, a K3 surface with an elliptic fibration with sections is called \textit{extremal} if it has Picard number $20$ and its Mordell-Weil group is finite. (A classification of all extremal K3 surfaces can be found in \cite{Ye99,SZ01}, together with their Mordell-Weil groups.) An extremal K3 surface with only singular fibers of Kodaira type $I_n$ (i.e. semi-stable) must have $6$ singular fibers. In this section we will consider configurations $\AR$ inside of a fixed extremal K3 surface $S$. The $\AR$ will be subconfigurations of the configuration $\AR_0$, which consists of all singular fibers and all sections.

\begin{remark}
Say that all fibers are of Kodaira type $I_n$, and let $MW$ be the order of the Mordell-Weil group. Then we have $$r(\AR_0)=24 + MW \ \ \ \text{and} \ \ \ t_2(\AR_0)=24 +6MW,$$ since torsion sections must be disjoint. Thus $\bar{c}_1^2(\AR_0) =10 MW$, $\bar{c}_2(\AR_0)=4 MW$, and so $$\bar{c}^2_1(\AR_0)/\bar{c}_2(\AR_0)=\frac{5}{2}.$$
\end{remark}

Our fixed extremal K3 surface will have an elliptic fibration $S \to \P^1$ with singular fibers: $2$ $I_8$ and $4$ $I_2$. It has exactly $8$ sections (it is \cite[Table 2 (25)]{SZ01}). Our configuration $\AR_0$ is the union of the $24$ $(-2)$-curves from the singular fibers and the $8$ sections. We now give a construction of the elliptic fibration $S \to \P^1$ and their sections.

Let us consider in $\P^2$ the pencil of cubics $$\{ 4\nu xyz + \mu(x-y)(z^2-xy)=0 \colon [\nu, \mu] \in \P^1 \} .$$ It has base points at $[0,0,1], [0,1,0], [1,0,0], [1,1,0]$. Let $S_0 \to \P^1$ be the corresponding elliptic fibration, where $S_0$ is a blow-up of $\P^2$ nine times. The singular fibers are over $[1,0]$ ($I_8$ fiber), $[0,1]$ ($I_2$ fiber), $[1,1]$ and $[1,-1]$ ($I_1$ fibers, with nodes at $[1,-1,1]$ and $[-1,1,1]$). We consider in $S_0$ the following smooth rational curves:

\begin{enumerate}
\item $L_1$, $L_2$, $L_3$ proper transforms of $\{z=0\}$, $\{y=0\}$, $\{x=0\}$ respectively.

\item $E_1$, $E_2$, $E_3$ over $[0,1,0]$, where $E_3$ is a section.

\item $E_4$, $E_5$ over $[0,0,1]$, where $E_5$ is a section.

\item $E_6$, $E_7$, $E_8$ over $[1,0,0]$, where $E_8$ is a section.

\item $E_9$ over $[1,1,0]$.

\item $L$ and $C$ proper transforms of $\{x-y=0\}$ and $\{z^2-xy=0\}$ respectively.

\item $M$ and $N$ proper transforms of $\{x+y=0\}$ and $\{z^2+xy=0\}$ respectively.
\end{enumerate}

The $I_8$ fiber in $S_0$ is formed by the cycle $L_1, E_1, E_2, L_3, E_4, L_2, E_7, E_6$; the fiber $I_2$ is formed by $L$ and $C$. The two $I_1$ fibers are denoted by $G_1$ and $G_2$. The four sections of $S_0 \to \P^1$ are $E_3$, $E_5$, $E_8$, $E_9$. The curves $M$ and $N$ are bisections, each of them passing through the node of $G_1$ and the node of $G_2$. Thus $G_i$, $M$, $N$ form a simple point of multiplicity $4$ at the node of $G_i$ for $i=1,2$.

The K3 surface $S$ is obtained by resolving the base change of degree $2$ of $S_0 \to \P^1$, which is ramified over $G_1$ and $G_2$. The resolution happens over the nodes of $G_1$ and $G_2$. Let $f \colon S \to S_0$ be the corresponding double cover composed with the resolution. The pre-image of $I_8$ by $f$ is equal to two $I_8$, the pre-image of $I_2$ is equal to two $I_2$, and the pre-images of each $I_1$ is equal to one $I_2$ fiber. The elliptic fibration $S \to \P^1$ has four sections from the pre-images of $E_3$, $E_5$, $E_8$, $E_9$, and four more sections from the pre-images of $M$ and $N$. We now list and name the pre-images of these distinguished curves, and in Figure \ref{f1} we show their intersections.

\begin{enumerate}
\item $f^{-1}(I_8)= \sum_{i=1}^8 F_i + \sum_{i=9}^{16} F_i$.

\item $f^{-1}(I_2)= (B_1+B_2) + (B_3+B_4)$.

\item $f^{-1}(G_1)= C_1 + C_2$ and $f^{-1}(G_2)= C_3 + C_4$.

\item $f^{-1}(E_3)=A_4$, $f^{-1}(E_5)=A_3$, $f^{-1}(E_8)=A_1$ and $f^{-1}(E_9)=A_2$.

\item $f^{-1}(N)= D_1 + D_2$ and $f^{-1}(M)= D_3 + D_4$.
\end{enumerate}

\begin{figure}[htbp]
\centering
\includegraphics[width=12.7cm]{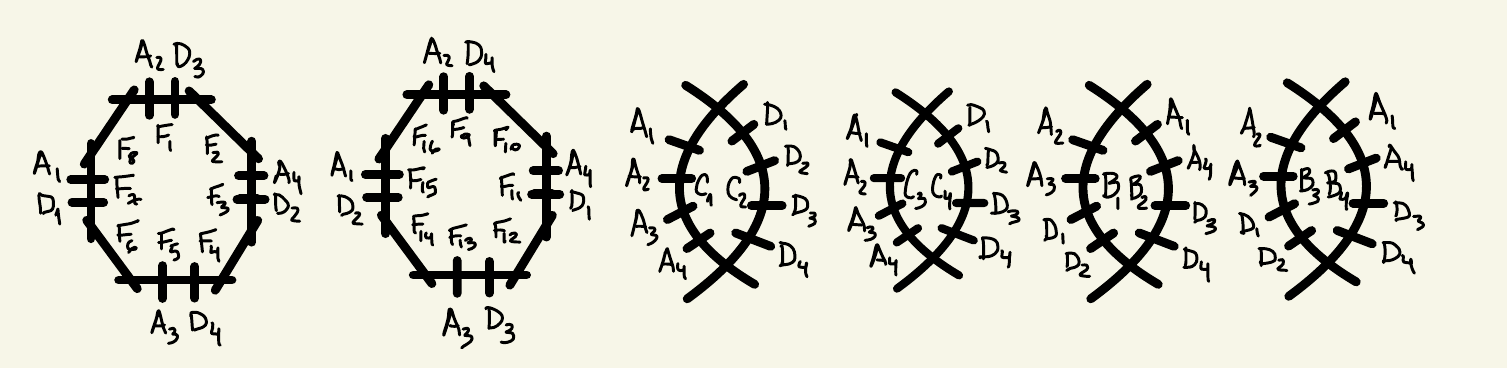}
\caption{The main configuration $\AR_0$}
\label{f1}
\end{figure}

\vspace{0.5cm}

Below we will show the existence of one particular surface $X$ with two Wahl singularities and some rational double points, which have no-local-to-global obstructions to deform, $K$ is ample, and $2 \leq K^2 \leq 9$ (we treated $K^2=1$ in Section \ref{s2}). We will describe various particular properties in each case to give some general picture of what can be expected. At the end we provide the necessary data to build more examples, but without further details.

In the figures below: the thicker dots are the points where we blow-up, the x means a point where more than one blow-up will be applied, and the dashed segments represent $(-1)$-curves.

\subsection{$K^2=2$} Let us consider the subconfiguration of $\AR_0$ given by the curves $C_1,C_2,B_1,B_3,A_2,D_1$. Let $\pi \colon \tilde{X} \to S$ be the blow-up indicated in Figure \ref{f2}.

\begin{figure}[htbp]
\centering
\includegraphics[width=12.5cm]{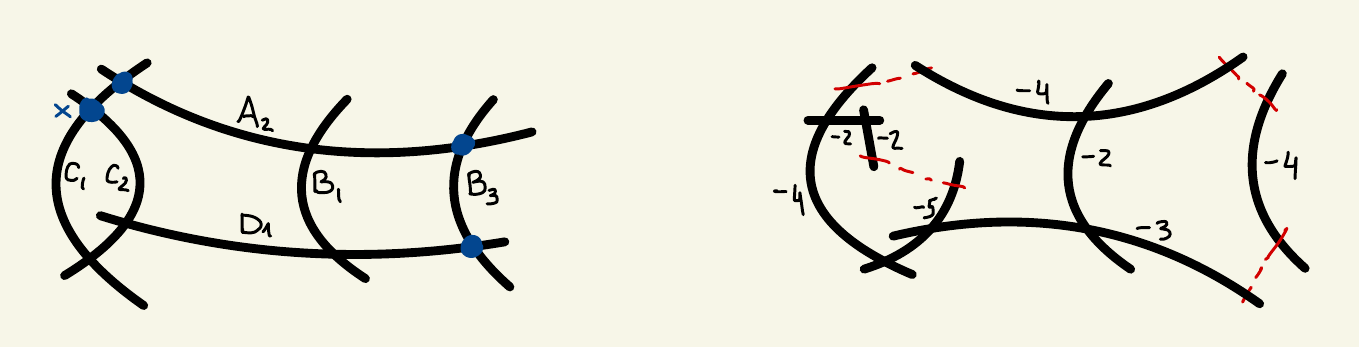}
\caption{Configuration for $K^2=2$}
\label{f2}
\end{figure}

We have two Wahl chains $[4]$ and $[4,2,3,5,4,2,2]$. Let $\phi \colon \tilde{X} \to X$ be the contraction of these two Wahl chains, and the Du Val chains $F_8$, $F_2+F_3+F_4+F_5+F_6$, $F_{10}$, and $F_{12}+F_{13}+F_{14}+F_{15}+F_{16}$. By Proposition \ref{nef}, the canonical class of $X$ is ample, and $X$ has singularities $$\frac{1}{2^2}(1,1) \ \ \ \ \frac{1}{27^2}(1,27 \cdot 8-1) \ \ \ \ 2 \times \frac{1}{2}(1,1) \ \ \ \ 2 \times \frac{1}{6}(1,5).$$

The intersection matrix of this subconfiguration of $\AR_0$ is as follows
\[\left(\begin{array}{*{6}c}
    -2 & 2 & 0 & 0 & 1 & 0\\
    2 & -2 & 0 & 0 & 0 & 1\\
    0 & 0 & -2 & 0 & 1 & 1\\
    0 & 0 & 0 & -2 & 1 & 1\\
    1 & 0 & 1 & 1 & -2 & 0\\
    0 & 1 & 1 & 1 & 0 & -2
\end{array}\right).\]
Its determinant is $-28$.

By Proposition \ref{obstruction}, we have that $X$ has no-local-to-global obstructions, and so it represents a smooth point in the KSBA compactification of dimension $16$. A general $\Q$-Gorenstein smoothing has canonical class ample. They are simply-connected surfaces, because gcd$(2,27)=1$ and there is a $\P^1$ joining the ends of the Wahl chains (see Proposition \ref{pi1}). We have $K^2=2$, and $p_g=1$. There are also $\Q$-Gorenstein smoothings producing simply-connected, $K^2=2$, $p_g=1$ surfaces with any Du Val subconfigurations from the minimal resolution of  $2 \times \frac{1}{2}(1,1)+ 2 \times \frac{1}{6}(1,5)$.

The two Wahl singularities define two divisors in this KSBA compactification, whose general member is a surface with one Wahl singularity and smooth elsewhere. If we keep the singularity from $[4]$, then we obtain that the minimal resolution is a $K^2=1$ surface of general type; if we keep the singularity from $[4,2,3,5,4,2,2]$, then we obtain a blow-up five times of a K3 surface as minimal resolution. That can be shown using flips as in \cite{U16b}.

\subsection{$K^2=3$} We now consider the subconfiguration of $\AR_0$ given by the curves $F_1, C_1, C_2, C_3, B_1, A_2, A_3, D_3$. Let $\pi \colon \tilde{X} \to S$ be the blow-up indicated in Figure \ref{f3}. Those blow-ups produce two Wahl chains $[2,3,5,3]$ and $[2,2,5,5,2,2,4]$. Let $\phi \colon \tilde{X} \to X$ be the contraction of these two Wahl chains, and the Du Val chains $F_{10}+ F_{11}+ F_{12}$, $F_{14}+ F_{15}+ F_{16}$, $F_3+F_4$, and $F_6 + F_7$. By Proposition \ref{nef}, the canonical class of $X$ is ample, and $X$ has singularities $$\frac{1}{8^2}(1,8 \cdot 3-1) \ \ \ \ \frac{1}{23^2}(1,23 \cdot 7-1) \ \ \ \ 2 \times \frac{1}{4}(1,3) \ \ \ \ 2 \times \frac{1}{3}(1,2).$$

\begin{figure}[htbp]
\centering
\includegraphics[width=12.5cm]{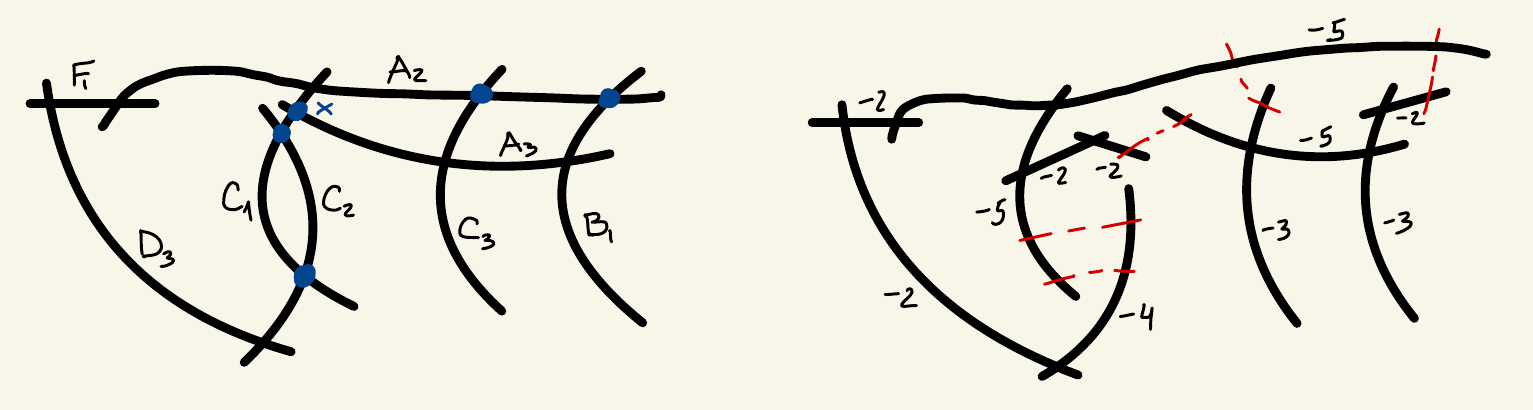}
\caption{Configuration for $K^2=3$}
\label{f3}
\end{figure}

The intersection matrix of this subconfiguration of $\AR_0$ is as follows
\[\left(\begin{array}{*{8}c}
    -2 & 0 & 0 & 0 & 0 & 1 & 0 & 1\\
    0 & -2 & 2 & 0 & 0 & 1 & 1 & 0\\
    0 & 2 & -2 & 0 & 0 & 0 & 0 & 1\\
    0 & 0 & 0 & -2 & 0 & 1 & 1 & 0\\
    0 & 0 & 0 & 0 & -2 & 1 & 1 & 0\\
    1 & 1 & 0 & 1 & 1 & -2 & 0 & 0\\
    0 & 1 & 0 & 1 & 1 & 0 & -2 & 0\\
    1 & 0 & 1 & 0 & 0 & 0 & 0 & -2
\end{array}\right).\]
Its determinant is $-76$.

By Proposition \ref{obstruction}, we have that $X$ has no-local-to-global obstructions, and so it represents a smooth point in the KSBA compactification of dimension $14$. A general $\Q$-Gorenstein smoothing has canonical class ample, $K^2=3$, and $p_g=1$. They are simply-connected by Proposition \ref{pi1} using $F_9$ and the $(-1)$-curve between $A_2$ and $B_1$. The two Wahl singularities define two divisors in this KSBA compactification, whose general member is a surface with one Wahl singularity and smooth elsewhere. In both cases the minimal resolution is a blown-up K3 surface. There are also $\Q$-Gorenstein smoothings producing simply-connected, $K^2=3$, $p_g=1$ surfaces with any Du Val subconfigurations from the minimal resolution of  $2 \times \frac{1}{4}(1,3) + 2 \times \frac{1}{3}(1,2)$.

\subsection{$K^2=4$} Take the configuration $F_1, F_2, F_3, C_1, C_2, C_3, B_1, A_2, A_3, D_2$. Let $\pi \colon \tilde{X} \to S$ be the blow-up indicated in Figure \ref{f4}. Hence we produce two Wahl chains $[2,2,2,3,3,5,3,5]$ and $[2,2,2,3,3,8,2,2,2,3,3,5]$. Let $\phi \colon \tilde{X} \to X$ be the contraction of these two Wahl chains, and the Du Val chains $B_4$, $F_{16}$, $F_{14}$, $F_6+F_7$, and $F_{10}+F_{11}+F_{12}$. By Proposition \ref{nef}, the canonical class of $X$ is ample, and $X$ has singularities $$\frac{1}{37^2}(1,37 \cdot 8-1) \ \ \ \ \frac{1}{129^2}(1,129 \cdot 28-1) \ \ \ \ 3 \times \frac{1}{2}(1,1) \ \ \ \ \frac{1}{3}(1,2) \ \ \ \ \frac{1}{4}(1,3).$$

\begin{figure}[htbp]
\centering
\includegraphics[width=12.5cm]{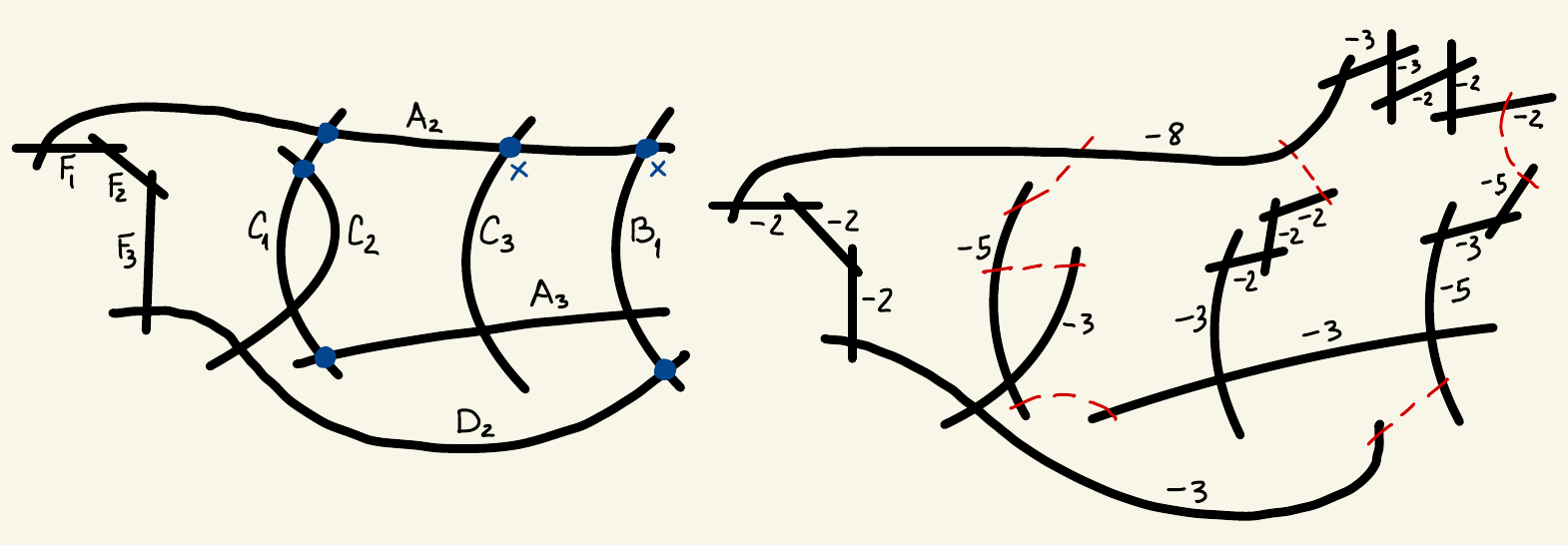}
\caption{Configuration for $K^2=4$}
\label{f4}
\end{figure}

The intersection matrix of this subconfiguration of $\AR_0$ is as follows
\[\left(\begin{array}{*{10}c}
    -2 & 1 & 0 & 0 & 0 & 0 & 0 & 1 & 0 & 0\\
    1 & -2 & 1 & 0 & 0 & 0 & 0 & 0 & 0 & 0\\
    0 & 1 & -2 & 0 & 0 & 0 & 0 & 0 & 0 & 1\\
    0 & 0 & 0 & -2 & 2 & 0 & 0 & 1 & 1 & 0\\
    0 & 0 & 0 & 2 & -2 & 0 & 0 & 0 & 0 & 1\\
    0 & 0 & 0 & 0 & 0 & -2 & 0 & 1 & 1 & 0\\
    0 & 0 & 0 & 0 & 0 & 0 & -2 & 1 & 1 & 1\\
    1 & 0 & 0 & 1 & 0 & 1 & 1 & -2 & 0 & 0\\
    0 & 0 & 0 & 1 & 0 & 1 & 1 & 0 & -2 & 0\\
    0 & 0 & 1 & 0 & 1 & 0 & 1 & 0 & 0 & -2
\end{array}\right).\]
Its determinant is $-136$.

By Proposition \ref{obstruction}, we have that $X$ has no-local-to-global obstructions, and so it represents a smooth point in the KSBA compactification of dimension $12$. A general $\Q$-Gorenstein smoothing has canonical class ample, which are simply-connected (direct from Proposition \ref{pi1}), $K^2=4$, and $p_g=1$. There are also $\Q$-Gorenstein smoothings producing simply-connected, $K^2=4$, $p_g=1$ surfaces with any Du Val subconfigurations from the minimal resolution of $3 \times \frac{1}{2}(1,1) + \frac{1}{3}(1,2) + \frac{1}{4}(1,3)$. The two Wahl singularities define two divisors in this KSBA compactification, whose general member is a surface with one Wahl singularity and smooth elsewhere. In both cases, the minimal resolution is a blown-up K3 surface.

\subsection{$K^2=5$} Let us take the subconfiguration $$F_1, F_2, F_3, C_1, C_2, C_3, B_1, A_3, A_4, D_1, D_2, D_3.$$ Let $\pi \colon \tilde{X} \to S$ be the blow-up indicated in Figure \ref{f5}. We now have the two Wahl chains $[2,2,2,5,5]$ and $$[2,2,2,3,3,3,8,2,2,2,3,3,3,5].$$ Let $\phi \colon \tilde{X} \to X$ be the contraction of these two Wahl chains, and the Du Val chains $F_6$, $F_{12}$, $F_{14}$, and $F_{9}+F_{10}+F_{16}$. By Proposition \ref{nef}, the canonical class of $X$ is ample, and $X$ has singularities $$\frac{1}{9^2}(1,9 \cdot 2-1) \ \ \ \ \frac{1}{337^2}(1,337 \cdot 73-1) \ \ \ \ 3 \times \frac{1}{2}(1,1) \ \ \ \ \frac{1}{4}(1,3).$$

\begin{figure}[htbp]
\centering
\includegraphics[width=9.3cm]{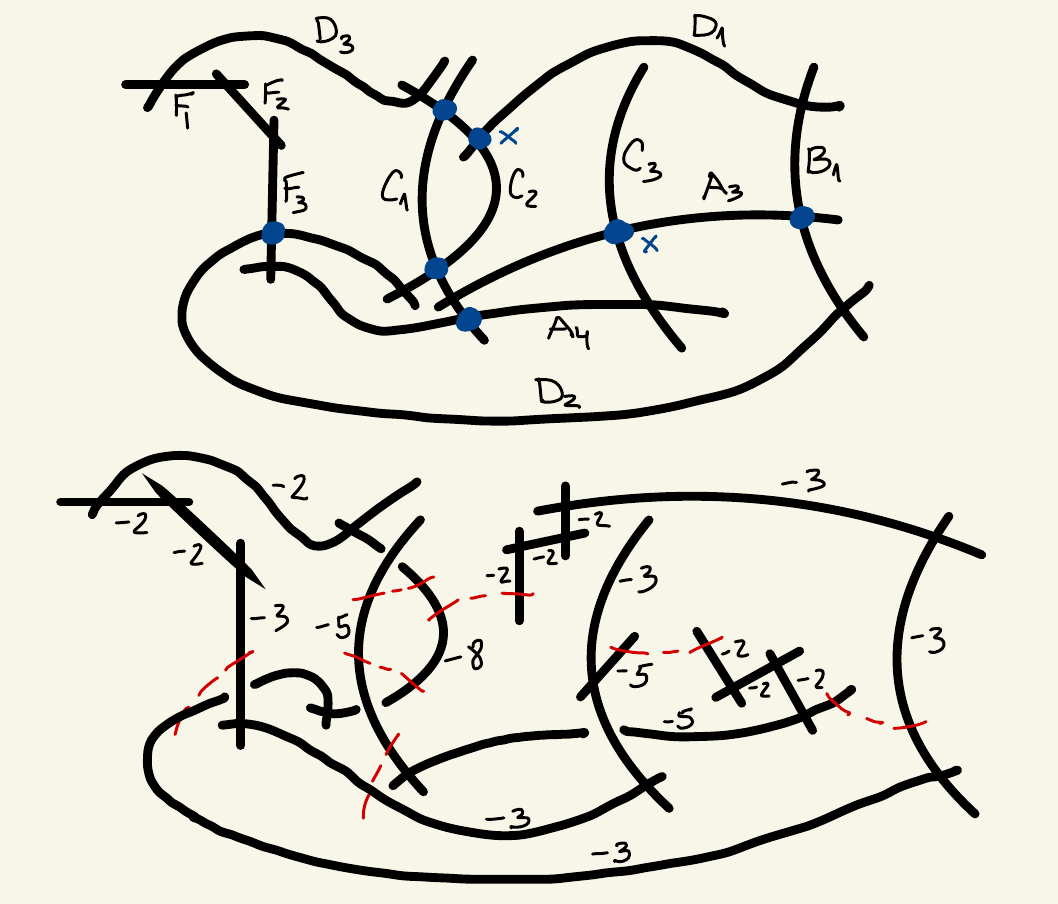}
\caption{Configuration for $K^2=5$}
\label{f5}
\end{figure}

The intersection matrix of this subconfiguration of $\AR_0$ is as follows
\[\left(\begin{array}{*{12}c}
    -2 & 1 & 0 & 0 & 0 & 0 & 0 & 0 & 0 & 0 & 0 & 1\\
    1 & -2 & 1 & 0 & 0 & 0 & 0 & 0 & 0 & 0 & 0 & 0\\
    0 & 1 & -2 & 0 & 0 & 0 & 0 & 0 & 1 & 0 & 1 & 0\\
    0 & 0 & 0 & -2 & 2 & 0 & 0 & 1 & 1 & 0 & 0 & 0\\
    0 & 0 & 0 & 2 & -2 & 0 & 0 & 0 & 0 & 1 & 1 & 1\\
    0 & 0 & 0 & 0 & 0 & -2 & 0 & 1 & 1 & 0 & 0 & 0\\
    0 & 0 & 0 & 0 & 0 & 0 & -2 & 1 & 0 & 1 & 1 & 0\\
    0 & 0 & 0 & 1 & 0 & 1 & 1 & -2 & 0 & 0 & 0 & 0\\
    0 & 0 & 1 & 1 & 0 & 1 & 0 & 0 & -2 & 0 & 0 & 0\\
    0 & 0 & 0 & 0 & 1 & 0 & 1 & 0 & 0 & -2 & 0 & 0\\
    0 & 0 & 1 & 0 & 1 & 0 & 1 & 0 & 0 & 0 & -2 & 0\\
    1 & 0 & 0 & 0 & 1 & 0 & 0 & 0 & 0 & 0 & 0 & -2
\end{array}\right).\]
Its determinant is $-264$.

By Proposition \ref{obstruction}, we have that $X$ has no-local-to-global obstructions, and so it represents a smooth point in the KSBA compactification of dimension $10$. A general $\Q$-Gorenstein smoothing has canonical class ample, which are simply-connected (direct from Proposition \ref{pi1}), $K^2=5$, and $p_g=1$. There are also $\Q$-Gorenstein smoothings producing simply-connected, $K^2=5$, $p_g=1$ surfaces with any Du Val subconfigurations from the minimal resolution of $3 \times \frac{1}{2}(1,1) + \frac{1}{4}(1,3)$. The two Wahl singularities define two divisors in this KSBA compactification, whose general member is a surface with one Wahl singularity and smooth elsewhere. The minimal resolution of $\frac{1}{9^2}(1,9 \cdot 2-1)$ gives an elliptic surface with a $[5,5,2,2,2]$ as in Proposition \ref{q=0pg=1}. The other comes from a K3 surface.

\subsection{$K^2=6$} Let us consider the subconfiguration $$F_1, F_2, F_3, F_4, F_5, C_1, C_2, C_3, B_1, B_4, A_1, A_2, A_3, D_3.$$ Let $\pi \colon \tilde{X} \to S$ be the blow-up indicated in Figure \ref{f6}. We now have the two Wahl chains $[2,5]$ and $$[2,2,2,2,2,3,4,3,9,2,2,2,2,3,3,2,3,7].$$ Let $\phi \colon \tilde{X} \to X$ be the contraction of these two Wahl chains, and the Du Val chains $F_{14}$, $F_{16}$, and $F_{10}+F_{11}+F_{12}$. By Proposition \ref{nef}, the canonical class of $X$ is ample, and $X$ has singularities $$\frac{1}{3^2}(1,3 \cdot 1-1) \ \ \ \ \frac{1}{829^2}(1,829 \cdot 126-1) \ \ \ \ 2 \times \frac{1}{2}(1,1) \ \ \ \ \frac{1}{4}(1,3).$$

\begin{figure}[htbp]
\centering
\includegraphics[width=11cm]{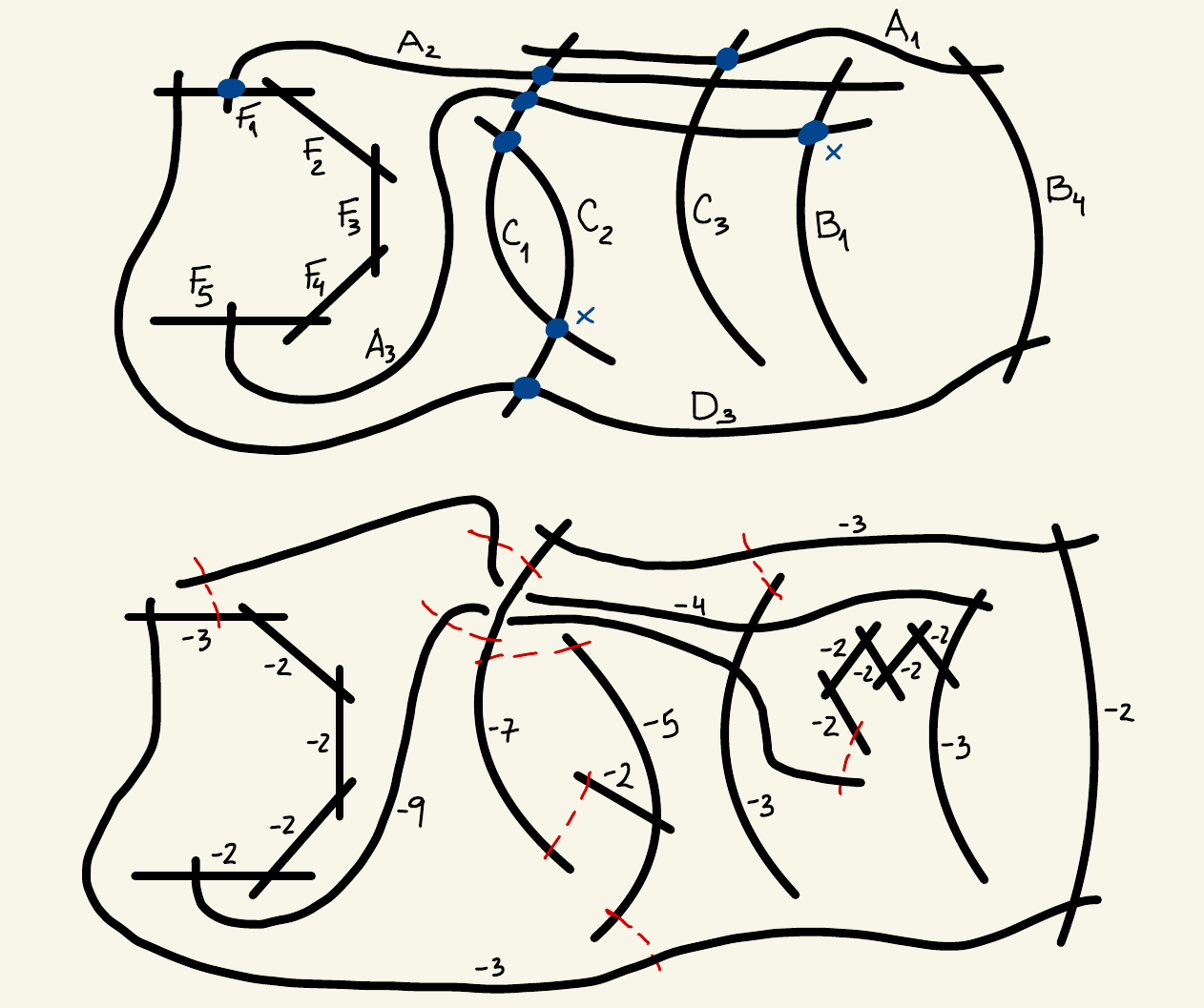}
\caption{Configuration for $K^2=6$}
\label{f6}
\end{figure}

The intersection matrix of this subconfiguration of $\AR_0$ is as follows
\setlength{\templength}{\arraycolsep}
\setlength{\arraycolsep}{2.5pt}
\[\left(\begin{array}{*{14}c}
    -2 & 1 & 0 & 0 & 0 & 0 & 0 & 0 & 0 & 0 & 0 & 1 & 0 & 1\\
    1 & -2 & 1 & 0 & 0 & 0 & 0 & 0 & 0 & 0 & 0 & 0 & 0 & 0\\
    0 & 1 & -2 & 1 & 0 & 0 & 0 & 0 & 0 & 0 & 0 & 0 & 0 & 0\\
    0 & 0 & 1 & -2 & 1 & 0 & 0 & 0 & 0 & 0 & 0 & 0 & 0 & 0\\
    0 & 0 & 0 & 1 & -2 & 0 & 0 & 0 & 0 & 0 & 0 & 0 & 1 & 0\\
    0 & 0 & 0 & 0 & 0 & -2 & 2 & 0 & 0 & 0 & 1 & 1 & 1 & 0\\
    0 & 0 & 0 & 0 & 0 & 2 & -2 & 0 & 0 & 0 & 0 & 0 & 0 & 1\\
    0 & 0 & 0 & 0 & 0 & 0 & 0 & -2 & 0 & 0 & 1 & 1 & 1 & 0\\
    0 & 0 & 0 & 0 & 0 & 0 & 0 & 0 & -2 & 0 & 0 & 1 & 1 & 0\\
    0 & 0 & 0 & 0 & 0 & 0 & 0 & 0 & 0 & -2 & 1 & 0 & 0 & 1\\
    0 & 0 & 0 & 0 & 0 & 1 & 0 & 1 & 0 & 1 & -2 & 0 & 0 & 0\\
    1 & 0 & 0 & 0 & 0 & 1 & 0 & 1 & 1 & 0 & 0 & -2 & 0 & 0\\
    0 & 0 & 0 & 0 & 1 & 1 & 0 & 1 & 1 & 0 & 0 & 0 & -2 & 0\\
    1 & 0 & 0 & 0 & 0 & 0 & 1 & 0 & 0 & 1 & 0 & 0 & 0 & -2
\end{array}\right).\]
\setlength{\arraycolsep}{\templength}
Its determinant is $-192$.

By Proposition \ref{obstruction}, we have that $X$ has no-local-to-global obstructions, and so it represents a smooth point in the KSBA compactification of dimension $8$. A general $\Q$-Gorenstein smoothing has canonical class ample, which are simply-connected (Proposition \ref{pi1}), $K^2=6$, and $p_g=1$. 

The two Wahl singularities define two divisors in this KSBA compactification, for $[5,2]$ the minimal resolution is a blown-up surface of general type with $K^2\leq 4$, for the other chain we obtain a blown-up K3 surface. There are also $\Q$-Gorenstein smoothings producing simply-connected, $K^2=6$, $p_g=1$ surfaces with any Du Val subconfigurations from the minimal resolution of $2 \times \frac{1}{2}(1,1) + \frac{1}{4}(1,3)$.

\subsection{$K^2=7$} Let us consider the subconfiguration of $\AR_0$ given by the curves $$F_1, F_2, F_3, F_7, F_8, C_1, C_2, C_3, B_1, F_9, F_{10}, F_{11}, A_1, A_2, D_1, D_2.$$ Let $\pi \colon \tilde{X} \to S$ be the blow-up indicated in Figure \ref{f7}. Thus we have the two Wahl chains $[4]$ and $$[2,2,2,3,2,3,3,4,2,2,2,5,6,2,3,3,4,5].$$ Let $\phi \colon \tilde{X} \to X$ be the contraction of these two Wahl chains, and the Du Val chains $F_5$, and $F_{13}+F_{14}$. 

By Proposition \ref{nef}, the canonical class of $X$ is ample, and $X$ has singularities $$\frac{1}{4}(1,1) \ \ \ \ \frac{1}{2168^2}(1,2168 \cdot 459-1) \ \ \ \ \frac{1}{2}(1,1) \ \ \ \ \frac{1}{3}(1,2).$$

\begin{figure}[htbp]
\centering
\includegraphics[width=10.5cm]{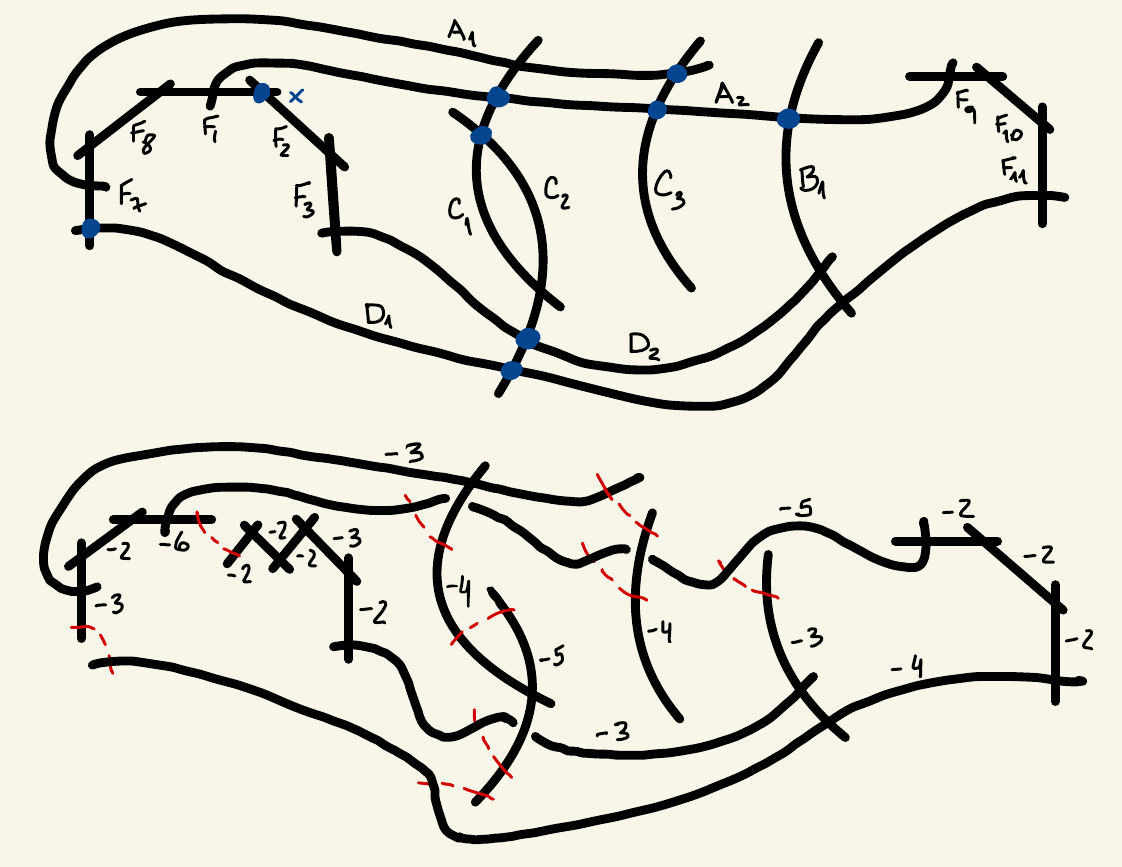}
\caption{Configuration for $K^2=7$}
\label{f7}
\end{figure}

The intersection matrix of this subconfiguration of $\AR_0$ is as follows
\setlength{\templength}{\arraycolsep}
\setlength{\arraycolsep}{2.5pt}
\[\left(\begin{array}{*{16}c}
    -2 & 1 & 0 & 0 & 1 & 0 & 0 & 0 & 0 & 0 & 0 & 0 & 0 & 1 & 0 & 0\\
    1 & -2 & 1 & 0 & 0 & 0 & 0 & 0 & 0 & 0 & 0 & 0 & 0 & 0 & 0 & 0\\
    0 & 1 & -2 & 0 & 0 & 0 & 0 & 0 & 0 & 0 & 0 & 0 & 0 & 0 & 0 & 1\\
    0 & 0 & 0 & -2 & 1 & 0 & 0 & 0 & 0 & 0 & 0 & 0 & 1 & 0 & 1 & 0\\
    1 & 0 & 0 & 1 & -2 & 0 & 0 & 0 & 0 & 0 & 0 & 0 & 0 & 0 & 0 & 0\\
    0 & 0 & 0 & 0 & 0 & -2 & 2 & 0 & 0 & 0 & 0 & 0 & 1 & 1 & 0 & 0\\
    0 & 0 & 0 & 0 & 0 & 2 & -2 & 0 & 0 & 0 & 0 & 0 & 0 & 0 & 1 & 1\\
    0 & 0 & 0 & 0 & 0 & 0 & 0 & -2 & 0 & 0 & 0 & 0 & 1 & 1 & 0 & 0\\
    0 & 0 & 0 & 0 & 0 & 0 & 0 & 0 & -2 & 0 & 0 & 0 & 0 & 1 & 1 & 1\\
    0 & 0 & 0 & 0 & 0 & 0 & 0 & 0 & 0 & -2 & 1 & 0 & 0 & 1 & 0 & 0\\
    0 & 0 & 0 & 0 & 0 & 0 & 0 & 0 & 0 & 1 & -2 & 1 & 0 & 0 & 0 & 0\\
    0 & 0 & 0 & 0 & 0 & 0 & 0 & 0 & 0 & 0 & 1 & -2 & 0 & 0 & 1 & 0\\
    0 & 0 & 0 & 1 & 0 & 1 & 0 & 1 & 0 & 0 & 0 & 0 & -2 & 0 & 0 & 0\\
    1 & 0 & 0 & 0 & 0 & 1 & 0 & 1 & 1 & 1 & 0 & 0 & 0 & -2 & 0 & 0\\
    0 & 0 & 0 & 1 & 0 & 0 & 1 & 0 & 1 & 0 & 0 & 1 & 0 & 0 & -2 & 0\\
    0 & 0 & 1 & 0 & 0 & 0 & 1 & 0 & 1 & 0 & 0 & 0 & 0 & 0 & 0 & -2
\end{array}\right).\]
\setlength{\arraycolsep}{\templength}
Its determinant is $-104$.

By Proposition \ref{obstruction}, we have that $X$ has no-local-to-global obstructions, and so it represents a smooth point in the KSBA compactification of dimension $6$. A general $\Q$-Gorenstein smoothing has canonical class ample, $K^2=7$, and $p_g=1$. Since gcd$(2168,4)=4 \neq 1$, we need a bit more in the argument to show simply-connectedness. We use the relations in \cite[p.20]{M61} on the exponent of a loop around an exceptional curve in $\exc(\phi)$ with respect to the generator $\gamma$ around the last $(-2)$-curve in the Wahl chain. Since $F_5=\P^1$ intersects transversally $F_3$ at one point, we have that $\gamma^9=1$. But because of $F_{12}$, we have that $\gamma^{307}=1$, and so $\gamma$ is trivial. But then a loop around the $[4]$ chain is also trivial, and we conclude using Van-Kampen that the smooth fiber is simply-connected (as in \cite[Theorem 3]{LP07}). The two Wahl singularities define two divisors in this KSBA compactification, for $[4]$ the minimal resolution is a surface of general type with $K^2=6$, for the other chain we obtain a blown-up K3 surface. There are also $\Q$-Gorenstein smoothings producing simply-connected, $K^2=7$, $p_g=1$ surfaces with any Du Val subconfigurations from the minimal resolution of $\frac{1}{2}(1,1) + \frac{1}{3}(1,2)$.

\subsection{$K^2=8$} Let us consider the subconfiguration of $\AR_0$ given by the curves $$F_1, F_2, F_3, F_7, F_8, C_1, C_2, C_4, B_1, F_{11}, F_{12}, F_{13}, F_{14}, F_{15}, A_1, A_2, D_1, D_2.$$ Let $\pi \colon \tilde{X} \to S$ be the blow-up indicated in Figure \ref{f8}. We have the two Wahl chains $[4]$ and $$[2,2,2,2,5,4,3,3,9,2,2,2,2,3,3,3,2,3,2,2,6].$$ Let $\phi \colon \tilde{X} \to X$ be the contraction of these two Wahl chains, and the $(-2)$-curve $F_{5}$. By Proposition \ref{nef}, the canonical class of $X$ is ample, and $X$ has singularities $$\frac{1}{4}(1,1) \ \ \ \ \frac{1}{3767^2}(1,3767 \cdot 715-1) \ \ \ \  \frac{1}{2}(1,1).$$

\begin{figure}[htbp]
\centering
\includegraphics[width=11.5cm]{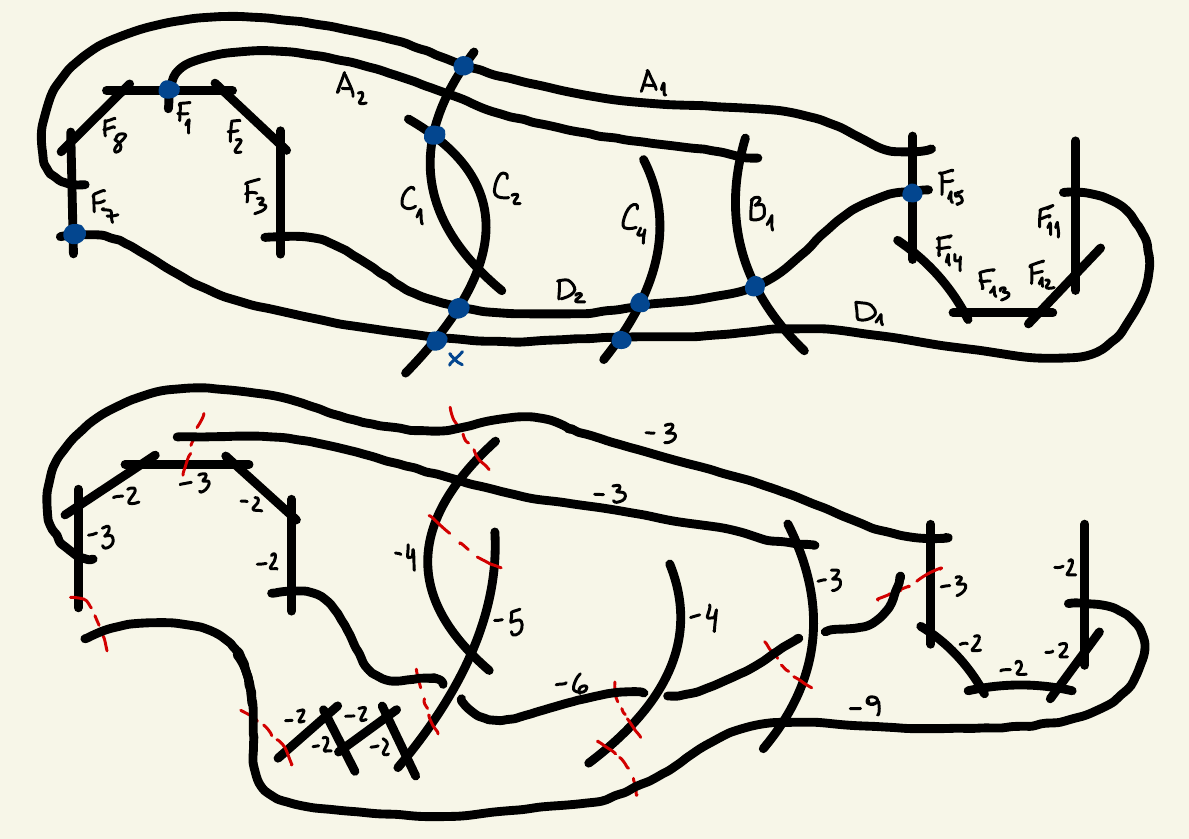}
\caption{Configuration for $K^2=8$}
\label{f8}
\end{figure}

The intersection matrix of this subconfiguration of $\AR_0$ is as follows
\begin{footnotesize}
\setlength{\templength}{\arraycolsep}
\setlength{\arraycolsep}{2.5pt}
\[\left(\begin{array}{*{18}c}
    -2 & 1 & 0 & 0 & 1 & 0 & 0 & 0 & 0 & 0 & 0 & 0 & 0 & 0 & 0 & 1 & 0 & 0\\
    1 & -2 & 1 & 0 & 0 & 0 & 0 & 0 & 0 & 0 & 0 & 0 & 0 & 0 & 0 & 0 & 0 & 0\\
    0 & 1 & -2 & 0 & 0 & 0 & 0 & 0 & 0 & 0 & 0 & 0 & 0 & 0 & 0 & 0 & 0 & 1\\
    0 & 0 & 0 & -2 & 1 & 0 & 0 & 0 & 0 & 0 & 0 & 0 & 0 & 0 & 1 & 0 & 1 & 0\\
    1 & 0 & 0 & 1 & -2 & 0 & 0 & 0 & 0 & 0 & 0 & 0 & 0 & 0 & 0 & 0 & 0 & 0\\
    0 & 0 & 0 & 0 & 0 & -2 & 2 & 0 & 0 & 0 & 0 & 0 & 0 & 0 & 1 & 1 & 0 & 0\\
    0 & 0 & 0 & 0 & 0 & 2 & -2 & 0 & 0 & 0 & 0 & 0 & 0 & 0 & 0 & 0 & 1 & 1\\
    0 & 0 & 0 & 0 & 0 & 0 & 0 & -2 & 0 & 0 & 0 & 0 & 0 & 0 & 0 & 0 & 1 & 1\\
    0 & 0 & 0 & 0 & 0 & 0 & 0 & 0 & -2 & 0 & 0 & 0 & 0 & 0 & 0 & 1 & 1 & 1\\
    0 & 0 & 0 & 0 & 0 & 0 & 0 & 0 & 0 & -2 & 1 & 0 & 0 & 0 & 0 & 0 & 1 & 0\\
    0 & 0 & 0 & 0 & 0 & 0 & 0 & 0 & 0 & 1 & -2 & 1 & 0 & 0 & 0 & 0 & 0 & 0\\
    0 & 0 & 0 & 0 & 0 & 0 & 0 & 0 & 0 & 0 & 1 & -2 & 1 & 0 & 0 & 0 & 0 & 0\\
    0 & 0 & 0 & 0 & 0 & 0 & 0 & 0 & 0 & 0 & 0 & 1 & -2 & 1 & 0 & 0 & 0 & 0\\
    0 & 0 & 0 & 0 & 0 & 0 & 0 & 0 & 0 & 0 & 0 & 0 & 1 & -2 & 1 & 0 & 0 & 1\\
    0 & 0 & 0 & 1 & 0 & 1 & 0 & 0 & 0 & 0 & 0 & 0 & 0 & 1 & -2 & 0 & 0 & 0\\
    1 & 0 & 0 & 0 & 0 & 1 & 0 & 0 & 1 & 0 & 0 & 0 & 0 & 0 & 0 & -2 & 0 & 0\\
    0 & 0 & 0 & 1 & 0 & 0 & 1 & 1 & 1 & 1 & 0 & 0 & 0 & 0 & 0 & 0 & -2 & 0\\
    0 & 0 & 1 & 0 & 0 & 0 & 1 & 1 & 1 & 0 & 0 & 0 & 0 & 1 & 0 & 0 & 0 & -2
\end{array}\right).\]
\setlength{\arraycolsep}{\templength}
\end{footnotesize}
Its determinant is $-160$.

By Proposition \ref{obstruction}, we have that $X$ has no-local-to-global obstructions, and so it represents a smooth point in the KSBA compactification of dimension $4$. A general $\Q$-Gorenstein smoothing has canonical class ample, which are simply-connected (direct from Proposition \ref{pi1} using the $\P^1$ from $D_2 \cap C_4$), $K^2=8$, and $p_g=1$. The two Wahl singularities define two divisors in this KSBA compactification, whose general member is a surface with one Wahl singularity and smooth elsewhere. If we keep the $1/4(1,1)$ and smooth the rest, then we obtain a surface whose minimal resolution is a minimal surface of general type with $K^2=7$ (another way to produce a $K^2=7$ example as the previous one); If we keep the other Wahl singularity, then we obtain a blown-up K3 surface. There are also $\Q$-Gorenstein smoothings producing simply-connected, $K^2=8$, $p_g=1$ surfaces with a $(-2)$-curve inside.

\subsection{$K^2=9$} We finally take the configuration
$$F_1, F_2, F_3, F_4, F_5, F_6, F_7, C_1, C_2, B_1, F_9, F_{10}, F_{11}, F_{12}, F_{13}, A_1, A_2, A_3, D_1, D_2.$$ Let $\pi \colon \tilde{X} \to S$ be the blow-up indicated in Figure \ref{f9}. Hence we produce two Wahl chains $[2,2,2,3,2,6,5]$ and $$[2,2,2,3,2,3,3,2,3,2,5,7,2,2,3,2,2,4,4,3,4,5].$$ Let $\phi \colon \tilde{X} \to X$ be the contraction of these two Wahl chains. By Proposition \ref{nef}, the canonical class of $X$ is ample, and $X$ has singularities $$\frac{1}{19^2}(1,19 \cdot 4-1) \ \ \ \ \frac{1}{10730^2}(1,10730 \cdot 2271-1).$$

\begin{figure}[htbp]
\centering
\includegraphics[width=11cm]{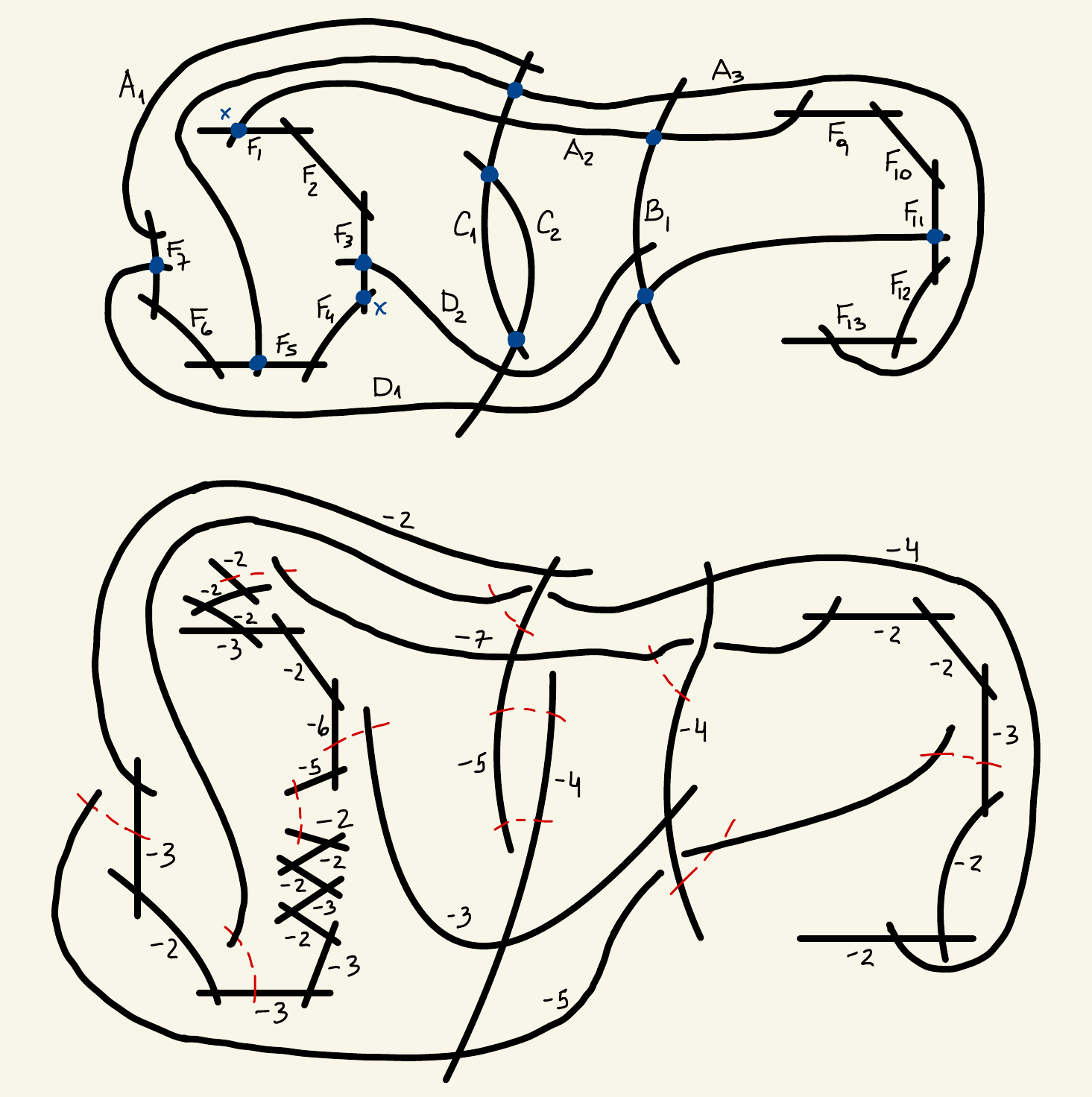}
\caption{Configuration for $K^2=9$}
\label{f9}
\end{figure}

The intersection matrix of this subconfiguration of $\AR_0$ is as follows
\begin{footnotesize}
\setlength{\templength}{\arraycolsep}
\setlength{\arraycolsep}{2.0pt}
\[\left(\begin{array}{*{20}c}
    -2 & 1 & 0 & 0 & 0 & 0 & 0 & 0 & 0 & 0 & 0 & 0 & 0 & 0 & 0 & 0 & 1 & 0 & 0 & 0\\
    1 & -2 & 1 & 0 & 0 & 0 & 0 & 0 & 0 & 0 & 0 & 0 & 0 & 0 & 0 & 0 & 0 & 0 & 0 & 0\\
    0 & 1 & -2 & 1 & 0 & 0 & 0 & 0 & 0 & 0 & 0 & 0 & 0 & 0 & 0 & 0 & 0 & 0 & 0 & 1\\
    0 & 0 & 1 & -2 & 1 & 0 & 0 & 0 & 0 & 0 & 0 & 0 & 0 & 0 & 0 & 0 & 0 & 0 & 0 & 0\\
    0 & 0 & 0 & 1 & -2 & 1 & 0 & 0 & 0 & 0 & 0 & 0 & 0 & 0 & 0 & 0 & 0 & 1 & 0 & 0\\
    0 & 0 & 0 & 0 & 1 & -2 & 1 & 0 & 0 & 0 & 0 & 0 & 0 & 0 & 0 & 0 & 0 & 0 & 0 & 0\\
    0 & 0 & 0 & 0 & 0 & 1 & -2 & 0 & 0 & 0 & 0 & 0 & 0 & 0 & 0 & 1 & 0 & 0 & 1 & 0\\
    0 & 0 & 0 & 0 & 0 & 0 & 0 & -2 & 2 & 0 & 0 & 0 & 0 & 0 & 0 & 1 & 1 & 1 & 0 & 0\\
    0 & 0 & 0 & 0 & 0 & 0 & 0 & 2 & -2 & 0 & 0 & 0 & 0 & 0 & 0 & 0 & 0 & 0 & 1 & 1\\
    0 & 0 & 0 & 0 & 0 & 0 & 0 & 0 & 0 & -2 & 0 & 0 & 0 & 0 & 0 & 0 & 1 & 1 & 1 & 1\\
    0 & 0 & 0 & 0 & 0 & 0 & 0 & 0 & 0 & 0 & -2 & 1 & 0 & 0 & 0 & 0 & 1 & 0 & 0 & 0\\
    0 & 0 & 0 & 0 & 0 & 0 & 0 & 0 & 0 & 0 & 1 & -2 & 1 & 0 & 0 & 0 & 0 & 0 & 0 & 0\\
    0 & 0 & 0 & 0 & 0 & 0 & 0 & 0 & 0 & 0 & 0 & 1 & -2 & 1 & 0 & 0 & 0 & 0 & 1 & 0\\
    0 & 0 & 0 & 0 & 0 & 0 & 0 & 0 & 0 & 0 & 0 & 0 & 1 & -2 & 1 & 0 & 0 & 0 & 0 & 0\\
    0 & 0 & 0 & 0 & 0 & 0 & 0 & 0 & 0 & 0 & 0 & 0 & 0 & 1 & -2 & 0 & 0 & 1 & 0 & 0\\
    0 & 0 & 0 & 0 & 0 & 0 & 1 & 1 & 0 & 0 & 0 & 0 & 0 & 0 & 0 & -2 & 0 & 0 & 0 & 0\\
    1 & 0 & 0 & 0 & 0 & 0 & 0 & 1 & 0 & 1 & 1 & 0 & 0 & 0 & 0 & 0 & -2 & 0 & 0 & 0\\
    0 & 0 & 0 & 0 & 1 & 0 & 0 & 1 & 0 & 1 & 0 & 0 & 0 & 0 & 1 & 0 & 0 & -2 & 0 & 0\\
    0 & 0 & 0 & 0 & 0 & 0 & 1 & 0 & 1 & 1 & 0 & 0 & 1 & 0 & 0 & 0 & 0 & 0 & -2 & 0\\
    0 & 0 & 1 & 0 & 0 & 0 & 0 & 0 & 1 & 1 & 0 & 0 & 0 & 0 & 0 & 0 & 0 & 0 & 0 & -2
\end{array}\right).\]
\setlength{\arraycolsep}{\templength}
\end{footnotesize}
Its determinant is $-16$.

By Proposition \ref{obstruction}, we have that $X$ has no-local-to-global obstructions, and so it represents a smooth point in the KSBA compactification of dimension $2$. A general $\Q$-Gorenstein smoothing has canonical class ample, which are simply-connected (Proposition \ref{pi1}), $K^2=9$, and $p_g=1$. The two Wahl singularities define two divisors in this KSBA compactification, whose general member is a surface with one Wahl singularity and smooth elsewhere. If we keep $[2,2,2,3,2,6,5]$ and smooth the other, then we obtain a surface of general type with $K^2=2$; for the other we obtain a blown-up K3 surface. We note that in this case we have $2$ singularities, and so this surface $X$ is maximally degenerated.

%------------------------------------------------------
\subsection{More surfaces for each $2 \leq K^2 \leq 9$}

Below we give more examples (there are many more) of surfaces which have no-local-to-global obstructions to deform from the same configuration $\AR_0$. They are $\Q$-Gorenstein smoothable to simply connected surfaces with $p_g=1$ and $2 \leq K^2 \leq 9$. The description will be given in the following form:

\noindent
$\rule{12.5cm}{1.1pt}$
\noindent

$K^2$ - configuration $\AR$ - determinant of the intersection matrix of $\AR$ - points where blow ups happen and in the order shown - $(n_1,a_1):[b_1,\ldots,b_{\ell_1}]$ - $(n_2,a_2):[c_1,\ldots,c_{\ell_2}]$

\noindent
$\rule{12.5cm}{1.1pt}$

\bigskip
For instance, the data for the example in Figure \ref{f3} is:

\noindent
$\rule{12.5cm}{1.1pt}$
\noindent

$K^2=3$ - $\{F_1,~C_1,~C_2,~C_3,~B_1,~A_2,~A_3,~D_3\}$ - det$=-76$ - $C_1 \cap C_2$, $C_1 \cap C_2$, $[2,2,1] \times ~C_1 \cap A_3$, $C_3 \cap A_2$, $B_1 \cap A_2$ - $(8,3):[3,5,3,2]$, $(23,7):[4,2,2,5,5,2,2]$

\noindent
$\rule{12.5cm}{1.1pt}$

\bigskip
Each of these examples may have a particular property which we briefly discuss below the description. They always have two Wahl singularities, and they may have ADE configurations disjoint from $\AR$. 

\bigskip

\noindent
$\rule{12.5cm}{1.1pt}$
\noindent

\textbf{(2.1)} $K^2=2$ - $\{C_1,~\allowbreak C_2,~\allowbreak B_1,~\allowbreak A_2,~\allowbreak A_3,~\allowbreak D_1\}$ - $\operatorname{det}=-40$ - $~C_1 \cap C_2$, $~C_2 \cap D_1$, $[2,\allowbreak 2,1] \times ~A_2 \cap B_1$, $[2,1] \times ~A_3 \cap C_1$ - $(11,3) : [4,\allowbreak 5,\allowbreak 3,\allowbreak 2,\allowbreak 2]$, $(8,3) : [3,\allowbreak 5,\allowbreak 3,\allowbreak 2]$

\noindent
$\rule{12.5cm}{1.1pt}$
\noindent

%\verb|/mnt/jereyes4/Downloads/folder/K3_campe1_2.jsonl : 3|

%Reason: Smallest, gcd=1

\noindent
$\rule{12.5cm}{1.1pt}$
\noindent

\textbf{(2.2)} $K^2=2$ - $\{C_1,~\allowbreak C_2,~\allowbreak C_3,~\allowbreak B_1,~\allowbreak A_2,~\allowbreak A_3\}$ - $\operatorname{det}=-32$ - $~C_1 \cap A_3$, $~B_1 \cap A_2$, $[2,1] \times ~C_2 \cap C_1$, $[3,\allowbreak 2,\allowbreak 1,\allowbreak 3,\allowbreak 3,\allowbreak 2] \times ~A_2 \cap C_1$ - $(18,7) : [3,\allowbreak 3,\allowbreak 2,\allowbreak 6,\allowbreak 3,\allowbreak 2]$, $(18,7) : [3,\allowbreak 3,\allowbreak 2,\allowbreak 6,\allowbreak 3,\allowbreak 2]$

\noindent
$\rule{12.5cm}{1.1pt}$
\noindent

In example \textbf{(2.2)} we have $ [3,\allowbreak 3,\allowbreak 2,\allowbreak 6,\allowbreak 3,\allowbreak 2] - 1 -  [3,\allowbreak 3,\allowbreak 2,\allowbreak 6,\allowbreak 3,\allowbreak 2]$, and so it produces the T-singularity $\frac{1}{2 \cdot 18^2}(1, 2 \cdot 18 \cdot 7 -1)$.

%Example taken from \verb|/mnt/jereyes4/Downloads/folder/K3_campe1_2.jsonl : 7|

\noindent
$\rule{12.5cm}{1.1pt}$
\noindent

\textbf{(2.3)} $K^2=2$ - $\{C_1,~\allowbreak C_2,~\allowbreak B_1,~\allowbreak B_3,~\allowbreak A_2,~\allowbreak D_1\}$ - $\operatorname{det}=-28$ - $~C_1 \cap A_2$, $~C_1 \cap C_2$, $[2,\allowbreak 2,1] \times ~B_3 \cap D_1$, $[3,\allowbreak 2,\allowbreak 2,\allowbreak 1,\allowbreak 4] \times ~B_1 \cap A_2$ - $(29,8) : [4,\allowbreak 3,\allowbreak 5,\allowbreak 3,\allowbreak 3,\allowbreak 2,\allowbreak 2]$, $(11,3) : [4,\allowbreak 5,\allowbreak 3,\allowbreak 2,\allowbreak 2]$

\noindent
$\rule{12.5cm}{1.1pt}$
\noindent

Example \textbf{(2.3)} shows the largest $n=29$ that we have for $K^2=2$. In relation to length, it is equal to the Wahl chain of length $7$ in Figure \ref{f2}, where $n=27$, $a=8$. 

%Example taken from \verb|/mnt/jereyes4/Downloads/folder/K3_campe1_2.jsonl : 13|

%Reason: Largest, gcd=1
\bigskip
\bigskip

\noindent
$\rule{12.5cm}{1.1pt}$
\noindent

\textbf{(3.1)} $K^2=3$ - $\{F_1,~\allowbreak C_1,~\allowbreak C_2,~\allowbreak C_3,~\allowbreak B_2,~\allowbreak A_1,~\allowbreak A_4,~\allowbreak D_3\}$ - $\operatorname{det}=-96$ - $~C_1 \cap C_2$, $~C_1 \cap A_4$, $~B_2 \cap D_3$, $~C_3 \cap A_1$, $[3,\allowbreak 3,\allowbreak 3,\allowbreak 1,\allowbreak 2,\allowbreak 3,\allowbreak 3] \times ~C_1 \cap A_1$ - $(34,13) : [2,\allowbreak 3,\allowbreak 3,\allowbreak 5,\allowbreak 3,\allowbreak 3,\allowbreak 3]$, $(34,13) : [2,\allowbreak 3,\allowbreak 3,\allowbreak 5,\allowbreak 3,\allowbreak 3,\allowbreak 3]$

\noindent
$\rule{12.5cm}{1.1pt}$
\noindent

Example \textbf{(3.1)} gives a T-singularity from $[2,\allowbreak 3,\allowbreak 3,\allowbreak 5,\allowbreak 3,\allowbreak 3,\allowbreak 3]-1-[2,\allowbreak 3,\allowbreak 3,\allowbreak 5,\allowbreak 3,\allowbreak 3,\allowbreak 3]$. 

%Example taken from \verb|/mnt/jereyes4/Downloads/folder/K3_campe1_3.jsonl : 7|

%Reason: T-sing, d=2

\noindent
$\rule{12.5cm}{1.1pt}$
\noindent

\textbf{(3.2)} $K^2=3$ - $\{F_1,~\allowbreak C_1,~\allowbreak C_3,~\allowbreak B_2,~\allowbreak A_1,~\allowbreak A_2,~\allowbreak A_4,~\allowbreak D_3\}$ - $\operatorname{det}=-112$ - $~C_3 \cap A_2$, $~B_2 \cap A_1$, $~C_1 \cap A_2$, $[2,\allowbreak 2,1] \times ~C_1 \cap A_4$, $[2,\allowbreak 2,\allowbreak 1,\allowbreak 4] \times ~A_1 \cap C_1$ - $(59,18) : [4,\allowbreak 2,\allowbreak 2,\allowbreak 3,\allowbreak 5,\allowbreak 3,\allowbreak 5,\allowbreak 2,\allowbreak 2]$, $(7,2) : [4,\allowbreak 5,\allowbreak 2,\allowbreak 2]$

\noindent
$\rule{12.5cm}{1.1pt}$
\noindent

Example \textbf{(3.2)} gives the largest Wahl chain we know for $K^2=3$ with $\ell=9$.

%Example taken from \verb|/mnt/jereyes4/Downloads/folder/K3_campe1_3.jsonl : 14|

%Reason: Largest, gcd=1

\bigskip
\bigskip

\noindent
$\rule{12.5cm}{1.1pt}$
\noindent

\textbf{(4.1)} $K^2=4$ - $\{F_1,~\allowbreak C_1,~\allowbreak C_2,~\allowbreak B_1,~\allowbreak F_9,~\allowbreak A_2,~\allowbreak A_3,~\allowbreak D_1,~\allowbreak D_3,~\allowbreak D_4\}$ - $\operatorname{det}=-276$ - $~C_1 \cap C_2$, $~C_1 \cap C_2$, $~F_1 \cap A_2$, $~B_1 \cap A_3$, $[2,1] \times ~B_1 \cap A_2$, $[2,1] \times ~D_4 \cap C_2$ - $(17,7) : [3,\allowbreak 2,\allowbreak 6,\allowbreak 2,\allowbreak 4,\allowbreak 2]$, $(19,7) : [3,\allowbreak 4,\allowbreak 5,\allowbreak 2,\allowbreak 3,\allowbreak 2]$

\noindent
$\rule{12.5cm}{1.1pt}$
\noindent

%Example taken from \verb|/mnt/jereyes4/Downloads/folder/K3_campe1_4.jsonl : 3|

%Reason: Smallest, gcd=1

\noindent
$\rule{12.5cm}{1.1pt}$
\noindent

\textbf{(4.2)} $K^2=4$ - $\{F_1,~\allowbreak F_2,~\allowbreak F_3,~\allowbreak C_1,~\allowbreak C_2,~\allowbreak B_1,~\allowbreak A_2,~\allowbreak A_3,~\allowbreak A_4,~\allowbreak D_2\}$ - $\operatorname{det}=-165$ - $~C_1 \cap C_2$, $~C_1 \cap A_2$, $~C_1 \cap A_4$, $~B_1 \cap D_2$, $[2,1] \times ~F_2 \cap F_3$, $[3,\allowbreak 2,\allowbreak 3,\allowbreak 2,\allowbreak 1,\allowbreak 3,\allowbreak 4,\allowbreak 3] \times ~C_2 \cap C_1$ - $(49,18) : [3,\allowbreak 4,\allowbreak 3,\allowbreak 5,\allowbreak 3,\allowbreak 2,\allowbreak 3,\allowbreak 2]$, $(109,40) : [3,\allowbreak 4,\allowbreak 3,\allowbreak 6,\allowbreak 2,\allowbreak 3,\allowbreak 3,\allowbreak 2,\allowbreak 3,\allowbreak 2]$

\noindent
$\rule{12.5cm}{1.1pt}$
\noindent

%Example taken from \verb|/mnt/jereyes4/Downloads/folder/K3_campe1_4.jsonl : 103|

%Reason: Worm-hole, gcd=1

\noindent
$\rule{12.5cm}{1.1pt}$
\noindent

\textbf{(4.3)} $K^2=4$ - $\{F_1,~\allowbreak F_2,~\allowbreak F_3,~\allowbreak C_1,~\allowbreak C_2,~\allowbreak B_1,~\allowbreak A_2,~\allowbreak A_3,~\allowbreak A_4,~\allowbreak D_2\}$ - $\operatorname{det}=-165$ - $~C_1 \cap C_2$, $~C_1 \cap A_2$, $~C_1 \cap A_4$, $~B_1 \cap D_2$, $[2,1] \times ~F_2 \cap F_3$, $[3,\allowbreak 2,\allowbreak 3,\allowbreak 2,\allowbreak 1,\allowbreak 3,\allowbreak 4] \times ~A_3 \cap B_1$ - $(128,47) : [3,\allowbreak 4,\allowbreak 3,\allowbreak 3,\allowbreak 5,\allowbreak 3,\allowbreak 3,\allowbreak 2,\allowbreak 3,\allowbreak 2]$, $(30,11) : [3,\allowbreak 4,\allowbreak 5,\allowbreak 3,\allowbreak 2,\allowbreak 3,\allowbreak 2]$

\noindent
$\rule{12.5cm}{1.1pt}$
\noindent

Examples \textbf{(4.2)} and \textbf{(4.3)} give an example of a wormhole over the cyclic quotient singularity $\frac{1}{24964}(1,9165)$ (c.f. \cite{UV21}). The two extremal P-resolutions are:

$$[3,\allowbreak 4,\allowbreak 3,\allowbreak 5,\allowbreak 3,\allowbreak 2,\allowbreak 3,\allowbreak 2] - 1 - [3,\allowbreak 4,\allowbreak 3,\allowbreak 6,\allowbreak 2,\allowbreak 3,\allowbreak 3,\allowbreak 2,\allowbreak 3,\allowbreak 2] $$ 

$$[3,\allowbreak 4,\allowbreak 3,\allowbreak 3,\allowbreak 5,\allowbreak 3,\allowbreak 3,\allowbreak 2,\allowbreak 3,\allowbreak 2]-1-[3,\allowbreak 4,\allowbreak 5,\allowbreak 3,\allowbreak 2,\allowbreak 3,\allowbreak 2].$$

%Example taken from \verb|/mnt/jereyes4/Downloads/folder/K3_campe1_4.jsonl : 121|

%Reason: Worm-hole, gcd=1

\noindent
$\rule{12.5cm}{1.1pt}$
\noindent

\textbf{(4.4)} $K^2=4$ - $\{F_1,~\allowbreak C_1,~\allowbreak C_2,~\allowbreak B_1,~\allowbreak B_4,~\allowbreak A_1,~\allowbreak A_2,~\allowbreak A_4,~\allowbreak D_1,~\allowbreak D_3\}$ - $\operatorname{det}=-240$ - $~C_1 \cap C_2$, $~C_1 \cap C_2$, $~B_4 \cap D_3$, $~F_1 \cap A_2$, $[2,1] \times ~A_4 \cap C_1$, $[2,\allowbreak 3,\allowbreak 3,\allowbreak 2,\allowbreak 1,\allowbreak 3,\allowbreak 3,\allowbreak 4,\allowbreak 2,\allowbreak 2] \times ~A_2 \cap C_1$ - $(92,35) : [3,\allowbreak 3,\allowbreak 4,\allowbreak 2,\allowbreak 2,\allowbreak 7,\allowbreak 2,\allowbreak 3,\allowbreak 3,\allowbreak 2]$, $(92,35) : [3,\allowbreak 3,\allowbreak 4,\allowbreak 2,\allowbreak 2,\allowbreak 7,\allowbreak 2,\allowbreak 3,\allowbreak 3,\allowbreak 2]$

\noindent
$\rule{12.5cm}{1.1pt}$
\noindent

Example \textbf{(4.4)} produces a T-singularity from $[3,\allowbreak 3,\allowbreak 4,\allowbreak 2,\allowbreak 2,\allowbreak 7,\allowbreak 2,\allowbreak 3,\allowbreak 3,\allowbreak 2]-1-[3,\allowbreak 3,\allowbreak 4,\allowbreak 2,\allowbreak 2,\allowbreak 7,\allowbreak 2,\allowbreak 3,\allowbreak 3,\allowbreak 2]$.

%Example taken from \verb|/mnt/jereyes4/Downloads/folder/K3_campe1_4.jsonl : 77|

%Reason: T-sing, d=2

\bigskip
\bigskip

\noindent
$\rule{12.5cm}{1.1pt}$
\noindent

\textbf{(5.1)} $K^2=5$ - $\{F_1,~\allowbreak F_2,~\allowbreak F_3,~\allowbreak C_1,~\allowbreak C_2,~\allowbreak C_3,~\allowbreak B_1,~\allowbreak F_9,~\allowbreak A_2,~\allowbreak A_3,~\allowbreak A_4,~\allowbreak D_1\}$ - $\operatorname{det}=-352$ - $~C_1 \cap C_2$, $~C_1 \cap A_2$, $~C_1 \cap A_4$, $~B_1 \cap A_2$, $~F_1 \cap A_2$, $~B_1 \cap A_3$, $[2,\allowbreak 2,1] \times ~A_3 \cap C_3$ - $(27,8) : [4,\allowbreak 2,\allowbreak 3,\allowbreak 5,\allowbreak 4,\allowbreak 2,\allowbreak 2]$, $(25,11) : [3,\allowbreak 2,\allowbreak 2,\allowbreak 3,\allowbreak 5,\allowbreak 5,\allowbreak 2]$

\noindent
$\rule{12.5cm}{1.1pt}$
\noindent

%Example taken from \verb|/mnt/jereyes4/Downloads/folder/K3_campe1_5.jsonl : 2|

%Reason: Smallest, gcd=1

\noindent
$\rule{12.5cm}{1.1pt}$
\noindent

\textbf{(5.2)} $K^2=5$ - $\{F_1,~\allowbreak F_2,~\allowbreak F_5,~\allowbreak C_1,~\allowbreak C_2,~\allowbreak B_1,~\allowbreak B_4,~\allowbreak A_1,~\allowbreak A_2,~\allowbreak A_3,~\allowbreak D_3,~\allowbreak D_4\}$ - $\operatorname{det}=-272$ - $~F_1 \cap A_2$, $~C_1 \cap C_2$, $~C_1 \cap A_1$, $~C_1 \cap A_3$, $~C_2 \cap D_4$, $~B_4 \cap D_3$, $[2,\allowbreak 3,\allowbreak 2,\allowbreak 3,\allowbreak 3,\allowbreak 3,\allowbreak 1,\allowbreak 2,\allowbreak 3,\allowbreak 3,\allowbreak 4] \times ~C_1 \cap A_2$ - $(175,67) : [2,\allowbreak 3,\allowbreak 3,\allowbreak 4,\allowbreak 6,\allowbreak 2,\allowbreak 3,\allowbreak 2,\allowbreak 3,\allowbreak 3,\allowbreak 3]$, $(175,67) : [2,\allowbreak 3,\allowbreak 3,\allowbreak 4,\allowbreak 6,\allowbreak 2,\allowbreak 3,\allowbreak 2,\allowbreak 3,\allowbreak 3,\allowbreak 3]$

\noindent
$\rule{12.5cm}{1.1pt}$
\noindent

Example \textbf{(5.2)} gives a T-singularity from $[2,\allowbreak 3,\allowbreak 3,\allowbreak 4,\allowbreak 6,\allowbreak 2,\allowbreak 3,\allowbreak 2,\allowbreak 3,\allowbreak 3,\allowbreak 3]-1-[2,\allowbreak 3,\allowbreak 3,\allowbreak 4,\allowbreak 6,\allowbreak 2,\allowbreak 3,\allowbreak 2,\allowbreak 3,\allowbreak 3,\allowbreak 3]$.

%Example taken from \verb|/mnt/jereyes4/Downloads/folder/K3_campe1_5.jsonl : 184|

%Reason: T-sing, d=2

\noindent
$\rule{12.5cm}{1.1pt}$
\noindent

\textbf{(5.3)} $K^2=5$ - $\{F_1,~\allowbreak F_2,~\allowbreak F_3,~\allowbreak C_1,~\allowbreak C_2,~\allowbreak B_1,~\allowbreak B_4,~\allowbreak F_9,~\allowbreak A_2,~\allowbreak D_2,~\allowbreak D_3,~\allowbreak D_4\}$ - $\operatorname{det}=-240$ - $~F_1 \cap A_2$, $~C_1 \cap C_2$, $~C_2 \cap D_2$, $~C_2 \cap D_3$, $~B_4 \cap D_4$, $[2,1] \times ~F_9 \cap A_2$, $[5,\allowbreak 3,\allowbreak 3,\allowbreak 2,\allowbreak 1,\allowbreak 3,\allowbreak 3,\allowbreak 3,\allowbreak 2,\allowbreak 2] \times ~A_2 \cap C_1$ - $(227,87) : [3,\allowbreak 3,\allowbreak 3,\allowbreak 2,\allowbreak 2,\allowbreak 3,\allowbreak 2,\allowbreak 6,\allowbreak 5,\allowbreak 3,\allowbreak 3,\allowbreak 2]$, $(107,41) : [3,\allowbreak 3,\allowbreak 3,\allowbreak 2,\allowbreak 2,\allowbreak 5,\allowbreak 5,\allowbreak 3,\allowbreak 3,\allowbreak 2]$

\noindent
$\rule{12.5cm}{1.1pt}$
\noindent

%Example taken from \verb|/mnt/jereyes4/Downloads/folder/K3_campe1_5.jsonl : 260|

%Reason: Worm-hole, gcd=1

\noindent
$\rule{12.5cm}{1.1pt}$
\noindent

\textbf{(5.4)} $K^2=5$ - $\{F_1,~\allowbreak F_2,~\allowbreak F_3,~\allowbreak C_1,~\allowbreak C_2,~\allowbreak B_1,~\allowbreak B_4,~\allowbreak F_9,~\allowbreak A_2,~\allowbreak D_2,~\allowbreak D_3,~\allowbreak D_4\}$ - $\operatorname{det}=-240$ - $~F_1 \cap A_2$, $~C_1 \cap C_2$, $~C_2 \cap D_2$, $~C_2 \cap D_3$, $~B_4 \cap D_4$, $[2,1] \times ~F_9 \cap A_2$, $[3,\allowbreak 3,\allowbreak 2,\allowbreak 1,\allowbreak 3,\allowbreak 3,\allowbreak 3,\allowbreak 2,\allowbreak 2] \times ~D_2 \cap B_1$ - $(60,23) : [3,\allowbreak 3,\allowbreak 3,\allowbreak 2,\allowbreak 2,\allowbreak 7,\allowbreak 3,\allowbreak 3,\allowbreak 2]$, $(274,105) : [3,\allowbreak 3,\allowbreak 3,\allowbreak 2,\allowbreak 2,\allowbreak 3,\allowbreak 5,\allowbreak 3,\allowbreak 5,\allowbreak 3,\allowbreak 3,\allowbreak 2]$

\noindent
$\rule{12.5cm}{1.1pt}$
\noindent

Examples \textbf{(5.3)} and \textbf{(5.4)} give an example of a wormhole over the cyclic quotient singularity $\frac{1}{111556}(1,42753)$ (c.f. \cite{UV21}). The two extremal P-resolutions are: $$[3,\allowbreak 3,\allowbreak 3,\allowbreak 2,\allowbreak 2,\allowbreak 3,\allowbreak 2,\allowbreak 6,\allowbreak 5,\allowbreak 3,\allowbreak 3,\allowbreak 2]-1-[3,\allowbreak 3,\allowbreak 3,\allowbreak 2,\allowbreak 2,\allowbreak 5,\allowbreak 5,\allowbreak 3,\allowbreak 3,\allowbreak 2]$$ $$[3,\allowbreak 3,\allowbreak 3,\allowbreak 2,\allowbreak 2,\allowbreak 7,\allowbreak 3,\allowbreak 3,\allowbreak 2]-1-[3,\allowbreak 3,\allowbreak 3,\allowbreak 2,\allowbreak 2,\allowbreak 3,\allowbreak 5,\allowbreak 3,\allowbreak 5,\allowbreak 3,\allowbreak 3,\allowbreak 2]$$ We recall that the longest Wahl chain we know for $K^2=5$ is in Figure \ref{f5} where $\ell=14$.

%Example taken from \verb|/mnt/jereyes4/Downloads/folder/K3_campe1_5.jsonl : 301|

%Reason: Worm-hole, \textbf{gcd=2}

\bigskip
\bigskip

\noindent
$\rule{12.5cm}{1.1pt}$
\noindent

\textbf{(6.1)} $K^2=6$ - $\{F_1,~\allowbreak C_1,~\allowbreak C_2,~\allowbreak C_3,~\allowbreak B_1,~\allowbreak B_3,~\allowbreak F_9,~\allowbreak F_{14},~\allowbreak F_{15},~\allowbreak F_{16},~\allowbreak A_1,~\allowbreak A_2,~\allowbreak A_3,~\allowbreak D_3\}$ - $\operatorname{det}=-240$ - $~C_1 \cap C_2$, $~C_1 \cap A_2$, $~C_1 \cap A_3$, $~F_{15} \cap A_1$, $~F_1 \cap A_2$, $~C_3 \cap A_2$, $~B_3 \cap A_2$, $[2,1] \times ~C_3 \cap A_3$ - $(31,13) : [3,\allowbreak 2,\allowbreak 3,\allowbreak 5,\allowbreak 3,\allowbreak 4,\allowbreak 2]$, $(36,13) : [3,\allowbreak 5,\allowbreak 2,\allowbreak 6,\allowbreak 2,\allowbreak 2,\allowbreak 3,\allowbreak 2]$

\noindent
$\rule{12.5cm}{1.1pt}$
\noindent

%Example taken from \verb|/mnt/jereyes4/Downloads/folder/K3_campe1_6.jsonl : 3|

%Reason: Smallest, gcd=1

\noindent
$\rule{12.5cm}{1.1pt}$
\noindent

\textbf{(6.2)} $K^2=6$ - $\{F_1,~\allowbreak C_1,~\allowbreak C_2,~\allowbreak C_3,~\allowbreak B_1,~\allowbreak F_{11},~\allowbreak F_{12},~\allowbreak F_{13},~\allowbreak F_{14},~\allowbreak F_{15},~\allowbreak A_1,~\allowbreak A_2,~\allowbreak A_3,~\allowbreak D_1\}$ - $\operatorname{det}=-240$ - $~C_1 \cap C_2$, $~C_1 \cap A_1$, $~C_1 \cap A_3$, $~C_3 \cap A_2$, $~B_1 \cap A_2$, $~F_{13} \cap A_3$, $~F_{11} \cap D_1$, $[3,\allowbreak 3,\allowbreak 2,\allowbreak 2,\allowbreak 3,\allowbreak 2,\allowbreak 3,\allowbreak 1,\allowbreak 2,\allowbreak 4,\allowbreak 5,\allowbreak 3,\allowbreak 3] \times ~B_1 \cap A_3$ - $(394,167) : [2,\allowbreak 4,\allowbreak 5,\allowbreak 3,\allowbreak 3,\allowbreak 5,\allowbreak 3,\allowbreak 3,\allowbreak 2,\allowbreak 2,\allowbreak 3,\allowbreak 2,\allowbreak 3]$, $(394,167) : [2,\allowbreak 4,\allowbreak 5,\allowbreak 3,\allowbreak 3,\allowbreak 5,\allowbreak 3,\allowbreak 3,\allowbreak 2,\allowbreak 2,\allowbreak 3,\allowbreak 2,\allowbreak 3]$

\noindent
$\rule{12.5cm}{1.1pt}$
\noindent

Example \textbf{(6.2)} defines the T-singularity $\frac{1}{2 \cdot 394^2 }(1,2 \cdot 394 \cdot 167-1)$.
%Example taken from \verb|/mnt/jereyes4/Downloads/folder/K3_campe1_6.jsonl : 231|

%Reason: T-sing, d=2

\noindent
$\rule{12.5cm}{1.1pt}$
\noindent

\textbf{(6.3)} $K^2=6$ - $\{F_1,~\allowbreak F_2,~\allowbreak F_3,~\allowbreak F_4,~\allowbreak F_7,~\allowbreak F_8,~\allowbreak C_1,~\allowbreak C_2,~\allowbreak C_3,~\allowbreak B_1,~\allowbreak A_1,~\allowbreak A_2,~\allowbreak A_3,~\allowbreak D_1\}$ - $\operatorname{det}=-272$ - $~F_1 \cap A_2$, $~F_7 \cap A_1$, $~C_1 \cap C_2$, $~C_1 \cap A_2$, $~C_1 \cap A_3$, $~C_3 \cap A_2$, $~B_1 \cap D_1$, $[5,\allowbreak 1,\allowbreak 2,\allowbreak 2,\allowbreak 2] \times ~A_3 \cap B_1$ - $(359,76) : [2,\allowbreak 2,\allowbreak 2,\allowbreak 3,\allowbreak 2,\allowbreak 3,\allowbreak 3,\allowbreak 3,\allowbreak 5,\allowbreak 3,\allowbreak 3,\allowbreak 4,\allowbreak 5]$, $(9,2) : [2,\allowbreak 2,\allowbreak 2,\allowbreak 5,\allowbreak 5]$

\noindent
$\rule{12.5cm}{1.1pt}$
\noindent

%Example taken from \verb|/mnt/jereyes4/Downloads/folder/K3_campe1_6.jsonl : 208|

%Reason: Worm-hole, gcd=1

\noindent
$\rule{12.5cm}{1.1pt}$
\noindent

\textbf{(6.4)} $K^2=6$ - $\{F_1,~\allowbreak F_2,~\allowbreak F_3,~\allowbreak F_4,~\allowbreak F_7,~\allowbreak F_8,~\allowbreak C_1,~\allowbreak C_2,~\allowbreak C_3,~\allowbreak B_1,~\allowbreak A_1,~\allowbreak A_2,~\allowbreak A_3,~\allowbreak D_1\}$ - $\operatorname{det}=-272$ - $~F_1 \cap A_2$, $~F_7 \cap A_1$, $~C_1 \cap C_2$, $~C_1 \cap A_2$, $~C_1 \cap A_3$, $~C_3 \cap A_2$, $~B_1 \cap D_1$, $[1,\allowbreak 2,\allowbreak 2,\allowbreak 2] \times ~F_1 \cap F_8$ - $(5,1) : [2,\allowbreak 2,\allowbreak 2,\allowbreak 7]$, $(411,89) : [2,\allowbreak 2,\allowbreak 2,\allowbreak 3,\allowbreak 3,\allowbreak 3,\allowbreak 3,\allowbreak 5,\allowbreak 3,\allowbreak 3,\allowbreak 3,\allowbreak 3,\allowbreak 5]$

\noindent
$\rule{12.5cm}{1.1pt}$
\noindent

Examples \textbf{(6.3)} and \textbf{(6.4)} give an example of a wormhole over the cyclic quotient singularity $\frac{1}{238816}(1,188257)$ (c.f. \cite{UV21}). The two extremal P-resolutions are:

$$[2,\allowbreak 2,\allowbreak 2,\allowbreak 3,\allowbreak 2,\allowbreak 3,\allowbreak 3,\allowbreak 3,\allowbreak 5,\allowbreak 3,\allowbreak 3,\allowbreak 4,\allowbreak 5]-1-[2,\allowbreak 2,\allowbreak 2,\allowbreak 5,\allowbreak 5]$$

$$[2,\allowbreak 2,\allowbreak 2,\allowbreak 7]-1-[2,\allowbreak 2,\allowbreak 2,\allowbreak 3,\allowbreak 3,\allowbreak 3,\allowbreak 3,\allowbreak 5,\allowbreak 3,\allowbreak 3,\allowbreak 3,\allowbreak 3,\allowbreak 5]$$

%Example taken from \verb|/mnt/jereyes4/Downloads/folder/K3_campe1_6.jsonl : 238|

%Reason: Worm-hole, gcd=1

\bigskip
\bigskip

\noindent
$\rule{12.5cm}{1.1pt}$
\noindent

\textbf{(7.1)} $K^2=7$ - $\{F_1,~\allowbreak F_2,~\allowbreak F_3,~\allowbreak F_5,~\allowbreak F_6,~\allowbreak C_1,~\allowbreak C_2,~\allowbreak B_1,~\allowbreak B_4,~\allowbreak F_{13},~\allowbreak A_1,~\allowbreak A_2,~\allowbreak A_3,~\allowbreak A_4,~\allowbreak D_1,~\allowbreak D_3\}$ - $\operatorname{det}=-336$ - $~C_1 \cap C_2$, $~C_1 \cap C_2$, $~C_1 \cap A_3$, $~C_1 \cap A_4$, $~B_1 \cap A_3$, $~B_4 \cap D_3$, $~B_1 \cap A_2$, $~F_1 \cap F_2$, $[2,1] \times ~F_1 \cap D_3$ - $(69,29) : [3,\allowbreak 2,\allowbreak 3,\allowbreak 3,\allowbreak 2,\allowbreak 6,\allowbreak 3,\allowbreak 4,\allowbreak 2]$, $(41,12) : [2,\allowbreak 2,\allowbreak 4,\allowbreak 2,\allowbreak 5,\allowbreak 4,\allowbreak 2,\allowbreak 4]$

\noindent
$\rule{12.5cm}{1.1pt}$
\noindent

%Example taken from \verb|/mnt/jereyes4/Downloads/folder/K3_campe1_7.jsonl : 1|

%Reason: Smallest, gcd=1, $\pi_1$ verified with various loops.

\bigskip
\bigskip

\noindent
$\rule{12.5cm}{1.1pt}$
\noindent

\textbf{(7.2)} $K^2=7$ - $\{F_1,~\allowbreak F_2,~\allowbreak F_3,~\allowbreak F_4,~\allowbreak F_5,~\allowbreak F_6,~\allowbreak F_7,~\allowbreak C_1,~\allowbreak C_2,~\allowbreak B_1,~\allowbreak B_4,~\allowbreak A_1,~\allowbreak A_2,~\allowbreak A_3,~\allowbreak D_1,~\allowbreak D_3\}$ - $\operatorname{det}=-64$ - $~F_5 \cap A_3$, $~F_7 \cap D_1$, $~C_1 \cap C_2$, $~C_1 \cap A_1$, $~C_1 \cap A_2$, $~C_2 \cap D_3$, $~B_1 \cap A_3$, $[2,\allowbreak 2,1] \times ~F_2 \cap F_1$, $[3,\allowbreak 2,\allowbreak 3,\allowbreak 2,\allowbreak 2,\allowbreak 3,\allowbreak 2,\allowbreak 2,\allowbreak 1,\allowbreak 4,\allowbreak 5,\allowbreak 4,\allowbreak 3] \times ~B_1 \cap A_2$ - $(322,85) : [4,\allowbreak 5,\allowbreak 4,\allowbreak 3,\allowbreak 5,\allowbreak 3,\allowbreak 2,\allowbreak 3,\allowbreak 2,\allowbreak 2,\allowbreak 3,\allowbreak 2,\allowbreak 2]$, $(1951,515) : [4,\allowbreak 5,\allowbreak 4,\allowbreak 3,\allowbreak 4,\allowbreak 5,\allowbreak 3,\allowbreak 2,\allowbreak 3,\allowbreak 3,\allowbreak 2,\allowbreak 3,\allowbreak 2,\allowbreak 2,\allowbreak 3,\allowbreak 2,\allowbreak 2]$

\noindent
$\rule{12.5cm}{1.1pt}$
\noindent

%Example taken from \verb|/mnt/jereyes4/Downloads/folder/K3_campe1_7.jsonl : 296|

%Reason: Worm-hole, gcd=1

\noindent
$\rule{12.5cm}{1.1pt}$
\noindent

\textbf{(7.3)} $K^2=7$ - $\{F_1,~\allowbreak F_2,~\allowbreak F_3,~\allowbreak F_4,~\allowbreak F_5,~\allowbreak F_6,~\allowbreak F_7,~\allowbreak C_1,~\allowbreak C_2,~\allowbreak B_1,~\allowbreak B_4,~\allowbreak A_1,~\allowbreak A_2,~\allowbreak A_3,~\allowbreak D_1,~\allowbreak D_3\}$ - $\operatorname{det}=-64$ - $~F_5 \cap A_3$, $~F_7 \cap D_1$, $~C_1 \cap C_2$, $~C_1 \cap A_1$, $~C_1 \cap A_2$, $~C_2 \cap D_3$, $~B_1 \cap A_3$, $[2,\allowbreak 2,1] \times ~F_2 \cap F_1$, $[3,\allowbreak 2,\allowbreak 3,\allowbreak 2,\allowbreak 2,\allowbreak 3,\allowbreak 2,\allowbreak 2,\allowbreak 1,\allowbreak 4,\allowbreak 5,\allowbreak 4] \times ~B_4 \cap A_1$ - $(2201,581) : [4,\allowbreak 5,\allowbreak 4,\allowbreak 3,\allowbreak 3,\allowbreak 3,\allowbreak 5,\allowbreak 3,\allowbreak 3,\allowbreak 3,\allowbreak 2,\allowbreak 3,\allowbreak 2,\allowbreak 2,\allowbreak 3,\allowbreak 2,\allowbreak 2]$, $(197,52) : [4,\allowbreak 5,\allowbreak 4,\allowbreak 5,\allowbreak 3,\allowbreak 2,\allowbreak 3,\allowbreak 2,\allowbreak 2,\allowbreak 3,\allowbreak 2,\allowbreak 2]$

\noindent
$\rule{12.5cm}{1.1pt}$
\noindent

Examples \textbf{(7.2)} and \textbf{(7.3)} give an example of a wormhole over the cyclic quotient singularity $\frac{1}{7051195}(1,1861309)$ (c.f. \cite{UV21}). The two extremal P-resolutions are:

$$[4,\allowbreak 5,\allowbreak 4,\allowbreak 3,\allowbreak 5,\allowbreak 3,\allowbreak 2,\allowbreak 3,\allowbreak 2,\allowbreak 2,\allowbreak 3,\allowbreak 2,\allowbreak 2]-1-[4,\allowbreak 5,\allowbreak 4,\allowbreak 3,\allowbreak 4,\allowbreak 5,\allowbreak 3,\allowbreak 2,\allowbreak 3,\allowbreak 3,\allowbreak 2,\allowbreak 3,\allowbreak 2,\allowbreak 2,\allowbreak 3,\allowbreak 2,\allowbreak 2]$$

$$[4,\allowbreak 5,\allowbreak 4,\allowbreak 3,\allowbreak 3,\allowbreak 3,\allowbreak 5,\allowbreak 3,\allowbreak 3,\allowbreak 3,\allowbreak 2,\allowbreak 3,\allowbreak 2,\allowbreak 2,\allowbreak 3,\allowbreak 2,\allowbreak 2]-1-[4,\allowbreak 5,\allowbreak 4,\allowbreak 5,\allowbreak 3,\allowbreak 2,\allowbreak 3,\allowbreak 2,\allowbreak 2,\allowbreak 3,\allowbreak 2,\allowbreak 2]$$

%Example taken from \verb|/mnt/jereyes4/Downloads/folder/K3_campe1_7.jsonl : 302|

%Reason: Worm-hole, gcd=1

\bigskip
\bigskip

\noindent
$\rule{12.5cm}{1.1pt}$
\noindent

\textbf{(8.1)} $K^2=8$ - $\{F_1,~\allowbreak F_2,~\allowbreak F_3,~\allowbreak F_5,~\allowbreak F_6,~\allowbreak F_7,~\allowbreak C_1,~\allowbreak C_2,~\allowbreak B_1,~\allowbreak B_4,~\allowbreak F_9,~\allowbreak F_{10},~\allowbreak A_1,~\allowbreak A_2,~\allowbreak A_4,~\allowbreak D_1,~\allowbreak D_2,~\allowbreak D_3\}$ - $\operatorname{det}=-256$ - $~F_1 \cap A_2$, $~F_3 \cap A_4$, $~F_7 \cap D_1$, $~C_1 \cap C_2$, $~C_1 \cap C_2$, $~C_1 \cap A_2$, $~C_2 \cap D_3$, $~B_1 \cap D_2$, $~B_4 \cap A_1$, $~D_3 \cap F_1$ - $(47,13) : [2,\allowbreak 2,\allowbreak 3,\allowbreak 3,\allowbreak 5,\allowbreak 3,\allowbreak 3,\allowbreak 4]$, $(115,34) : [4,\allowbreak 2,\allowbreak 3,\allowbreak 3,\allowbreak 5,\allowbreak 3,\allowbreak 3,\allowbreak 4,\allowbreak 2,\allowbreak 2]$

\noindent
$\rule{12.5cm}{1.1pt}$
\noindent

%Example taken from \verb|/mnt/jereyes4/Downloads/folder/K3_campe1_8.jsonl : 1|

%Reason: Smallest, gcd=1

\noindent
$\rule{12.5cm}{1.1pt}$
\noindent

\textbf{(8.2)} $K^2=8$ - $\{F_4,~\allowbreak F_5,~\allowbreak F_6,~\allowbreak F_7,~\allowbreak C_1,~\allowbreak C_2,~\allowbreak C_4,~\allowbreak B_1,~\allowbreak F_9,~\allowbreak F_{10},~\allowbreak F_{11},~\allowbreak F_{15},~\allowbreak F_{16},~\allowbreak A_1,~\allowbreak A_2,~\allowbreak D_1,~\allowbreak D_2,~\allowbreak D_3\}$ - $\operatorname{det}=-208$ - $~F_7 \cap D_1$, $~C_1 \cap C_2$, $~C_1 \cap A_1$, $~C_2 \cap D_1$, $~C_2 \cap D_2$, $~C_4 \cap D_2$, $~B_1 \cap D_1$, $~F_9 \cap A_2$, $~F_{15} \cap D_2$, $[2,\allowbreak 5,\allowbreak 4,\allowbreak 3,\allowbreak 3,\allowbreak 5,\allowbreak 1,\allowbreak 2,\allowbreak 2,\allowbreak 2,\allowbreak 3,\allowbreak 3,\allowbreak 3,\allowbreak 2,\allowbreak 3,\allowbreak 2,\allowbreak 2] \times ~D_1 \cap C_4$ - $(1631,353) : [2,\allowbreak 2,\allowbreak 2,\allowbreak 3,\allowbreak 3,\allowbreak 3,\allowbreak 2,\allowbreak 3,\allowbreak 2,\allowbreak 2,\allowbreak 6,\allowbreak 2,\allowbreak 5,\allowbreak 4,\allowbreak 3,\allowbreak 3,\allowbreak 5]$, $(1631,353) : [2,\allowbreak 2,\allowbreak 2,\allowbreak 3,\allowbreak 3,\allowbreak 3,\allowbreak 2,\allowbreak 3,\allowbreak 2,\allowbreak 2,\allowbreak 6,\allowbreak 2,\allowbreak 5,\allowbreak 4,\allowbreak 3,\allowbreak 3,\allowbreak 5]$

\noindent
$\rule{12.5cm}{1.1pt}$
\noindent

Example \textbf{(8.2)} defines the T-singularity $\frac{1}{2 \cdot 1631^2 }(1,2 \cdot 1631 \cdot 353-1)$.

%Example taken from \verb|/mnt/jereyes4/Downloads/folder/K3_campe1_8.jsonl : 115|

%Reason: T-sing, d=2

\noindent
$\rule{12.5cm}{1.1pt}$
\noindent

\textbf{(8.3)} $K^2=8$ - $\{F_1,~\allowbreak F_2,~\allowbreak F_3,~\allowbreak F_7,~\allowbreak F_8,~\allowbreak C_1,~\allowbreak C_2,~\allowbreak C_4,~\allowbreak B_1,~\allowbreak F_{11},~\allowbreak F_{12},~\allowbreak F_{13},~\allowbreak F_{14},~\allowbreak F_{15},~\allowbreak A_1,~\allowbreak A_2,~\allowbreak D_1,~\allowbreak D_2\}$ - $\operatorname{det}=-160$ - $~F_1 \cap A_2$, $~F_7 \cap D_1$, $~C_1 \cap C_2$, $~C_1 \cap A_1$, $~C_2 \cap D_1$, $~B_1 \cap D_2$, $~F_{15} \cap D_2$, $~C_4 \cap D_2$, $[2,1] \times ~F_{11} \cap D_1$, $[3,\allowbreak 2,\allowbreak 3,\allowbreak 3,\allowbreak 3,\allowbreak 2,\allowbreak 2,\allowbreak 2,\allowbreak 3,\allowbreak 2,\allowbreak 1,\allowbreak 3,\allowbreak 6,\allowbreak 3,\allowbreak 3,\allowbreak 4] \times ~C_2 \cap D_2$ - $(1253,444) : [3,\allowbreak 6,\allowbreak 3,\allowbreak 3,\allowbreak 4,\allowbreak 5,\allowbreak 3,\allowbreak 2,\allowbreak 3,\allowbreak 3,\allowbreak 3,\allowbreak 2,\allowbreak 2,\allowbreak 2,\allowbreak 3,\allowbreak 2]$, $(2173,770) : [3,\allowbreak 6,\allowbreak 3,\allowbreak 3,\allowbreak 4,\allowbreak 7,\allowbreak 2,\allowbreak 2,\allowbreak 3,\allowbreak 2,\allowbreak 3,\allowbreak 3,\allowbreak 3,\allowbreak 2,\allowbreak 2,\allowbreak 2,\allowbreak 3,\allowbreak 2]$

\noindent
$\rule{12.5cm}{1.1pt}$
\noindent

%Example taken from \verb|/mnt/jereyes4/Downloads/folder/K3_campe1_8.jsonl : 138|

%Reason: Worm-hole, gcd=1

\noindent
$\rule{12.5cm}{1.1pt}$
\noindent

\textbf{(8.4)} $K^2=8$ - $\{F_1,~\allowbreak F_2,~\allowbreak F_3,~\allowbreak F_7,~\allowbreak F_8,~\allowbreak C_1,~\allowbreak C_2,~\allowbreak C_4,~\allowbreak B_1,~\allowbreak F_{11},~\allowbreak F_{12},~\allowbreak F_{13},~\allowbreak F_{14},~\allowbreak F_{15},~\allowbreak A_1,~\allowbreak A_2,~\allowbreak D_1,~\allowbreak D_2\}$ - $\operatorname{det}=-160$ - $~F_1 \cap A_2$, $~F_7 \cap D_1$, $~C_1 \cap C_2$, $~C_1 \cap A_1$, $~C_2 \cap D_1$, $~B_1 \cap D_2$, $~F_{15} \cap D_2$, $~C_4 \cap D_2$, $[2,1] \times ~F_{11} \cap D_1$, $[2,\allowbreak 3,\allowbreak 3,\allowbreak 3,\allowbreak 2,\allowbreak 2,\allowbreak 2,\allowbreak 3,\allowbreak 2,\allowbreak 1,\allowbreak 3,\allowbreak 6,\allowbreak 3,\allowbreak 3] \times ~F_2 \cap F_1$ - $(2966,1051) : [3,\allowbreak 6,\allowbreak 3,\allowbreak 3,\allowbreak 4,\allowbreak 4,\allowbreak 5,\allowbreak 2,\allowbreak 3,\allowbreak 2,\allowbreak 3,\allowbreak 3,\allowbreak 3,\allowbreak 2,\allowbreak 2,\allowbreak 2,\allowbreak 3,\allowbreak 2]$, $(460,163) : [3,\allowbreak 6,\allowbreak 3,\allowbreak 3,\allowbreak 6,\allowbreak 2,\allowbreak 3,\allowbreak 3,\allowbreak 3,\allowbreak 2,\allowbreak 2,\allowbreak 2,\allowbreak 3,\allowbreak 2]$

\noindent
$\rule{12.5cm}{1.1pt}$
\noindent

Examples \textbf{(8.3)} and \textbf{(8.4)} give an example of a wormhole over the cyclic quotient singularity $\frac{1}{11737476}(1,4159165)$ (c.f. \cite{UV21}). The two extremal P-resolutions are:
\bigskip

$[3,\allowbreak 6,\allowbreak 3,\allowbreak 3,\allowbreak 4,\allowbreak 5,\allowbreak 3,\allowbreak 2,\allowbreak 3,\allowbreak 3,\allowbreak 3,\allowbreak 2,\allowbreak 2,\allowbreak 2,\allowbreak 3,\allowbreak 2]-1-[3,\allowbreak 6,\allowbreak 3,\allowbreak 3,\allowbreak 4,\allowbreak 7,\allowbreak 2,\allowbreak 2,\allowbreak 3,\allowbreak 2,\allowbreak 3,\allowbreak 3,\allowbreak 3,\allowbreak 2,\allowbreak 2,\allowbreak 2,\allowbreak 3,\allowbreak 2]$

$[3,\allowbreak 6,\allowbreak 3,\allowbreak 3,\allowbreak 4,\allowbreak 4,\allowbreak 5,\allowbreak 2,\allowbreak 3,\allowbreak 2,\allowbreak 3,\allowbreak 3,\allowbreak 3,\allowbreak 2,\allowbreak 2,\allowbreak 2,\allowbreak 3,\allowbreak 2]-1-[3,\allowbreak 6,\allowbreak 3,\allowbreak 3,\allowbreak 6,\allowbreak 2,\allowbreak 3,\allowbreak 3,\allowbreak 3,\allowbreak 2,\allowbreak 2,\allowbreak 2,\allowbreak 3,\allowbreak 2]$
%Example taken from \verb|/mnt/jereyes4/Downloads/folder/K3_campe1_8.jsonl : 159|

%Reason: Worm-hole, \textbf{gcd=2}, Fundamental Group is hard!

\bigskip
\bigskip

\noindent
$\rule{12.5cm}{1.1pt}$
\noindent

\textbf{(9.1)} $K^2=9$ - $\{F_1,~\allowbreak F_2,~\allowbreak F_3,~\allowbreak F_4,~\allowbreak F_5,~\allowbreak F_6,~\allowbreak F_7,~\allowbreak C_1,~\allowbreak C_2,~\allowbreak B_1,~\allowbreak F_9,~\allowbreak F_{11},~\allowbreak F_{12},~\allowbreak F_{15},~\allowbreak F_{16},~\allowbreak A_1,~\allowbreak A_2,~\allowbreak A_3,~\allowbreak D_1,~\allowbreak D_2\}$ - $\operatorname{det}=-64$ - $~F_3 \cap D_2$, $~F_5 \cap A_3$, $~F_7 \cap D_1$, $~C_1 \cap C_2$, $~C_1 \cap C_2$, $~C_1 \cap A_2$, $~B_1 \cap A_2$, $~B_1 \cap D_1$, $~F_{15} \cap D_2$, $~F_{15} \cap A_1$, $[2,\allowbreak 4,\allowbreak 1,\allowbreak 2,\allowbreak 2] \times ~F_1 \cap A_2$ - $(3706,1101) : [2,\allowbreak 2,\allowbreak 4,\allowbreak 4,\allowbreak 4,\allowbreak 4,\allowbreak 3,\allowbreak 5,\allowbreak 3,\allowbreak 3,\allowbreak 2,\allowbreak 3,\allowbreak 2,\allowbreak 3,\allowbreak 2,\allowbreak 3,\allowbreak 2,\allowbreak 4]$, $(13,4) : [2,\allowbreak 2,\allowbreak 7,\allowbreak 2,\allowbreak 2,\allowbreak 4]$

\noindent
$\rule{12.5cm}{1.1pt}$
\noindent

%Example taken from \verb|/mnt/jereyes4/Downloads/folder/K3_campe1_9.jsonl : 16|

%Reason:

\noindent
$\rule{12.5cm}{1.1pt}$
\noindent

\textbf{(9.2)} $K^2=9$ - $\{F_1,~\allowbreak F_2,~\allowbreak F_3,~\allowbreak F_4,~\allowbreak F_5,~\allowbreak F_6,~\allowbreak F_7,~\allowbreak C_1,~\allowbreak C_2,~\allowbreak B_1,~\allowbreak F_9,~\allowbreak F_{11},~\allowbreak F_{12},~\allowbreak F_{15},~\allowbreak F_{16},~\allowbreak A_1,~\allowbreak A_2,~\allowbreak A_3,~\allowbreak D_1,~\allowbreak D_2\}$ - $\operatorname{det}=-64$ - $~F_3 \cap D_2$, $~F_5 \cap A_3$, $~F_7 \cap D_1$, $~C_1 \cap C_2$, $~C_1 \cap C_2$, $~C_1 \cap A_2$, $~B_1 \cap A_2$, $~B_1 \cap D_1$, $~F_{15} \cap D_2$, $~F_7 \cap A_1$, $[4,\allowbreak 4,\allowbreak 2,\allowbreak 2,\allowbreak 1,\allowbreak 4,\allowbreak 2,\allowbreak 3,\allowbreak 2,\allowbreak 3,\allowbreak 2] \times ~A_2 \cap F_9$ - $(175,52) : [4,\allowbreak 2,\allowbreak 3,\allowbreak 2,\allowbreak 3,\allowbreak 2,\allowbreak 2,\allowbreak 7,\allowbreak 4,\allowbreak 4,\allowbreak 2,\allowbreak 2]$, $(3706,1101) : [4,\allowbreak 2,\allowbreak 3,\allowbreak 2,\allowbreak 3,\allowbreak 2,\allowbreak 3,\allowbreak 2,\allowbreak 3,\allowbreak 3,\allowbreak 5,\allowbreak 3,\allowbreak 4,\allowbreak 4,\allowbreak 4,\allowbreak 4,\allowbreak 2,\allowbreak 2]$

\noindent
$\rule{12.5cm}{1.1pt}$
\noindent

%Example taken from \verb|/mnt/jereyes4/Downloads/folder/K3_campe1_9.jsonl : 17|
%Reason:

Examples \textbf{(9.1)} and \textbf{(9.2)} come from the same configuration, but they are not wormholes. Moreover, one of the Wahl singularities is the same.

\noindent
$\rule{12.5cm}{1.1pt}$
\noindent

\textbf{(9.3)} $K^2=9$ - $\{F_1,~\allowbreak F_2,~\allowbreak F_3,~\allowbreak F_4,~\allowbreak F_5,~\allowbreak F_7,~\allowbreak F_8,~\allowbreak C_1,~\allowbreak C_2,~\allowbreak B_2,~\allowbreak F_9,~\allowbreak F_{13},~\allowbreak F_{15},~\allowbreak F_{16},~\allowbreak A_1,~\allowbreak A_2,~\allowbreak A_3,~\allowbreak D_1,~\allowbreak D_2,~\allowbreak D_3\}$ - $\operatorname{det}=-16$ - $~F_1 \cap A_2$, $~F_1 \cap D_3$, $~F_3 \cap F_4$, $~F_7 \cap A_1$, $~C_1 \cap C_2$, $~C_1 \cap A_1$, $~C_1 \cap A_3$, $~C_2 \cap D_2$, $~C_2 \cap D_3$, $[2,1] \times ~D_2 \cap F_{15}$, $[2,\allowbreak 1,\allowbreak 3,\allowbreak 2] \times ~A_3 \cap F_{13}$ - $(7,3) : [3,\allowbreak 2,\allowbreak 6,\allowbreak 2]$, $(8294,3475) : [3,\allowbreak 2,\allowbreak 3,\allowbreak 4,\allowbreak 2,\allowbreak 4,\allowbreak 4,\allowbreak 2,\allowbreak 2,\allowbreak 3,\allowbreak 5,\allowbreak 5,\allowbreak 2,\allowbreak 3,\allowbreak 2,\allowbreak 4,\allowbreak 2,\allowbreak 3,\allowbreak 4,\allowbreak 2]$

\noindent
$\rule{12.5cm}{1.1pt}$
\noindent

%Example taken from \verb|/mnt/jereyes4/Downloads/folder/K3_campe1_9.jsonl : 25|

\noindent
$\rule{12.5cm}{1.1pt}$
\noindent

\textbf{(9.4)} $K^2=9$ - $\{F_1,~\allowbreak F_2,~\allowbreak F_3,~\allowbreak F_4,~\allowbreak F_5,~\allowbreak F_6,~\allowbreak F_7,~\allowbreak C_1,~\allowbreak C_2,~\allowbreak B_1,~\allowbreak F_9,~\allowbreak F_{10},~\allowbreak F_{11},~\allowbreak F_{12},~\allowbreak F_{13},~\allowbreak A_1,~\allowbreak A_2,~\allowbreak A_3,~\allowbreak D_1,~\allowbreak D_2\}$ - $\operatorname{det}=-16$ - $~F_3 \cap D_2$, $~F_5 \cap A_3$, $~F_7 \cap D_1$, $~C_1 \cap C_2$, $~C_1 \cap C_2$, $~C_1 \cap A_3$, $~B_1 \cap A_2$, $~B_1 \cap D_2$, $~F_{11} \cap D_1$, $[2,\allowbreak 2,1] \times ~F_1 \cap A_2$, $[2,\allowbreak 3,\allowbreak 2,\allowbreak 3,\allowbreak 2,\allowbreak 3,\allowbreak 2,\allowbreak 3,\allowbreak 2,\allowbreak 2,\allowbreak 1,\allowbreak 4,\allowbreak 4,\allowbreak 4,\allowbreak 4,\allowbreak 4,\allowbreak 2,\allowbreak 2] \times ~F_{11} \cap F_{10}$ - $(2493,668) : [4,\allowbreak 4,\allowbreak 4,\allowbreak 4,\allowbreak 4,\allowbreak 2,\allowbreak 2,\allowbreak 7,\allowbreak 2,\allowbreak 3,\allowbreak 2,\allowbreak 3,\allowbreak 2,\allowbreak 3,\allowbreak 2,\allowbreak 3,\allowbreak 2,\allowbreak 2]$, $(9401,2519) : [4,\allowbreak 4,\allowbreak 4,\allowbreak 4,\allowbreak 4,\allowbreak 2,\allowbreak 2,\allowbreak 3,\allowbreak 2,\allowbreak 6,\allowbreak 5,\allowbreak 2,\allowbreak 3,\allowbreak 2,\allowbreak 3,\allowbreak 2,\allowbreak 3,\allowbreak 2,\allowbreak 3,\allowbreak 2,\allowbreak 2]$

\noindent
$\rule{12.5cm}{1.1pt}$
\noindent

Example \textbf{(9.4)} shows the largest sum of lengths of Wahl chains we have found. 

%Example taken from \verb|/mnt/jereyes4/Downloads/folder/K3_campe1_9.jsonl : 26|

%Reason: Largest sum of lengths, gcd=1

%------------------------------------------------------

\end{document}